\renewcommand{\baselinestretch}{1.1}
\DeclareMathAlphabet{\mathcal}{OMS}{cmsy}{m}{n}
\definecolor{aCol}{RGB}{0,0,166}
    \def\@bbhspace{.02em}
    \def\RR{\hspace{\@bbhspace}{\bf R}\hspace{\@bbhspace}}
    \def\bS{\hspace{\@bbhspace}{\bf S}\hspace{\@bbhspace}}
    \def\CC{\hspace{\@bbhspace}{\bf C}\hspace{\@bbhspace}}
    \def\ZZ{\hspace{\@bbhspace}{\bf Z}\hspace{\@bbhspace}}
    \def\NN{\hspace{\@bbhspace}{\bf N}\hspace{\@bbhspace}}
    \def\PP{\hspace{\@bbhspace}{\bf P}\hspace{\@bbhspace}}
    \def\BB{\hspace{\@bbhspace}{\bf B}\hspace{\@bbhspace}}
    \def\CP{\hspace{\@bbhspace}{\bf CP}\hspace{\@bbhspace}}
    \def\RP{\hspace{\@bbhspace}{\bf RP}\hspace{\@bbhspace}}
\def\RRb{\overline{\RR}}
\def\CPb{\overline{\CP}}
\def\RPb{\overline{\RP}}
\def\conj{\mathrm{conj}}
\def\cR{\mathcal{R}}
\def\cQ{\mathcal{Q}}
\def\cA{\mathcal{A}}
\def\cX{\mathcal{X}}
\def\Qb{\overline{Q}}
\def\tcap{\pitchfork}
\def\Aut{\mathrm{Aut}}
\def\fR{\mathfrak{R}}
\def\fQ{\mathfrak{Q}}
\def\Xb{\overline{X}}
\def\fRb{\overline{\mathfrak{R}}}
\def\fQb{\overline{\mathfrak{Q}}}
\def\msC{\mathscr{C}}
\def\msD{\mathscr{D}}
\let\oldell\ell
\def\ell{\oldell\hspace{.05em}}
\def\thmhead@plain#1#2#3{%
  \thmname{#1}\thmnumber{\@ifnotempty{#1}{ }\@upn{#2}}%
  \thmnote{ {\the\thm@notefont#3}}}
\let\thmhead\thmhead@plain
\newtheorem{theorem}{Theorem}[section]
\newtheorem{proposition}[theorem]{Proposition}
\newtheorem{corollary}[theorem]{Corollary}
\newtheorem{lemma}[theorem]{Lemma}
\newtheorem{conjecture}[theorem]{Conjecture}
\theoremstyle{definition}
\newtheorem{definition}[theorem]{Definition}
\patchcmd{\section}{\scshape}{\bfseries}{}{}
\patchcmd{\subsection}{\bfseries}{\bfseries\itshape}{}{}
    \renewcommand{\@secnumfont}{\bfseries}
\newcommand\blfootnote[1]{%
  \begingroup
  \renewcommand\thefootnote{}\footnote{#1}%
  \addtocounter{footnote}{-1}%
  \endgroup
}
\let\std@footnotetext\@footnotetext
\let\@footnotetext\std@footnotetext
\def\baselinestretch{\setspace@singlespace}\reset@font\normalfont}
\def\cref[#1]#2{\hyperref[#2]{#1~\ref*{#2}}}
\def\ceqref#1{{\color{aCol}(\ref{#1})}}
    \newtheorem*{rep@theorem}{\rep@title}
    \newcommand{\newreptheorem}[2]{%
    \newenvironment{rep#1}[1]{\def\rep@title{\!\! \cref[Theorem]{##1}}\begin{rep@theorem}}{\end{rep@theorem}}}
\let\oldciteyear\citeyear
\renewcommand{\citeyear}[2][]{{\color{aCol}[}\oldciteyear[#1]{#2}{\color{aCol}]}}
\let\oldciteauthor\citeauthor
\renewcommand{\citeauthor}[2][]{{\color{aCol}[}\oldciteauthor[#1]{#2}{\color{aCol}]}}
\renewcommand{\cite}[2][]{%
    \ifx&#1&%
        {\color{aCol}[}\oldciteauthor{#2}\ \oldciteyear{#2}{\color{aCol}]}%
    \else%
        {\color{aCol}[}\oldciteauthor{#2}\ \oldciteyear{#2},\ #1{\color{aCol}]}%
    \fi%
}
\def\citeLRS{{\color{aCol}[}\hyperlink{cite.LRS15}{Levine et al. 2015}, Theorem 10.1{\color{aCol}]}}
\begin{document}

\title[A Viro--Zvonilov-type inequality]{A Viro--Zvonilov-type inequality\\for Q-flexible curves of odd degree\vspace{-1.6\baselineskip}}

\author[Anthony Saint-Criq]{\textsc{Anthony Saint-Criq}}
\let\origmaketitle\maketitle
\def\maketitle{
  \begingroup
  \let\MakeUppercase\relax
  \origmaketitle
  \endgroup
}


\maketitle

\vspace{3.5\baselineskip}
\begin{center}\begin{minipage}{.89\textwidth}
    \textbf{\normalsize
    We define an analogue of the Arnold surface for odd degree flexible curves, and we use it to double branch cover $Q$-flexible embeddings, where $Q$-flexible is a condition to be added to the classical notion of a flexible curve. This allows us to obtain a Viro--Zvonilov-type inequality: an upper bound on the number of non-empty ovals of a curve of odd degree. We investigate our method for flexible curves in a quadric to derive a similar bound in two cases. We also digress around a possible definition of non-orientable flexible curves, for which our method still works and a similar inequality holds.}
\end{minipage}\end{center}
\vspace{3.2\baselineskip}
\blfootnote{\hspace{-1.3em}\textit{MSC2020:} primary 14P25, 57M12; secondary 14H45, 57S25.\\\textit{Keywords:} algebraic curves, flexible curves, Hilbert's 16th problem, double branched covers,

\hspace{-.2em}nonorientable surfaces.}
\medskip

Let $F\subset\CP^2$ be a flexible curve of odd degree $m$. We denote as $\ell^\pm$ and $\ell^0$ the number of ovals of the curve $\RR F\subset\RP^2$ that bound from the outside a component of $\RP^2\smallsetminus\RR F$ which has positive, negative or zero Euler characteristic, respectively. In particular, $\ell^+$ is the number of empty ovals, and $\ell^0+\ell^-$ is that of nonempty\linebreak
ones. O. Viro and V. Zvonilov \citeyear{VZ92} proved the following upper bound for the number of non-empty ovals: $$\ell^0+\ell^-\leqslant\frac{(m-3)^2}{4}+\frac{m^2-h(m)^2}{4h(m)^2},$$
with $h(m)$ denoting the biggest prime power that divides $m$. Their proof relied on taking a branched cover of $\CP^2$, ramified over the surface $F$. Usually, it is a good choice to take doubly sheeted branched covers, but this is not possible in this setting where $m$ is odd. Odd degree curves are a different story compared to even degree ones, one reason being the nonexistence of the Arnold surface in $\bS^4$ ($\RR F$ is not null-homologous in $H_1(\RP^2;\ZZ/2)$, and neither is $F$ in $H_2(\CP^2;\ZZ/2)$). In the present paper, we give a definition of an analogue of the Arnold surface in $\CPb^2$ for odd degree curves. This means that, under a certain condition of being $Q$-flexible (up to taking another conic $Q$ with empty real part and pseudo-holomorphic, this is always satisfied by pseudo-holomorphic curves), we are allowed to take the double branched cover of $\CPb^2$ ramified over a perturbation of this Arnold surface. This condition is also always satisfied by algebraic curves. We will show the following result, by methods analogous to Viro and Zvonilov.

\begin{reptheorem}{thm:main}
    Let $F$ be a $Q$-flexible curve of odd degree $m$. Then $$\ell^0+\ell^-\leqslant\frac{(m-1)^2}{4}.$$
    If equality holds, then the curve is type I.
\end{reptheorem}

It is worth mentioning that this is not quite Zvonilov's bound $(m-1)(m-3)/4$ \citeyear{Zvo79}, which works for any flexible curve that intersects a real line generically (this condition being the degree one analogue of our $Q$-flexibility), and in particular for any pseudo-holomorphic curve. However, it appears that $Q$-flexibility and this condition by Zvonilov are independent for general flexible curves.

In \cref[Section]{sec:prelim}, we discuss some constructions in $\CP^2$ and $\CPb^2$ seen as 2-fold branched covers of the standard 4-sphere. In \cref[Section]{sec:arnold}, we construct the Arnold surface for odd degree curves, and we describe the behavior of the real part of the curve under this construction. In \cref[Section]{sec:proof}, we prove the inequality. In \cref[Section]{sec:quadrics},\linebreak
we review our method for curves in a quadric to produce a result which, to our knowledge,\hspace{-.05em} is new even for algebraic curves.\hspace{-.05em} In \cref[Section]{sec:further},\hspace{-.05em} we compare our \mbox{inequality}\linebreak{}
to Viro and Zvonilov's, and we investigate the possible notion of nonorientable\linebreak
flexible curves, for which our method still applies to derive a similar bound.

\section{Preliminaries}\label{sec:prelim}\noindent
    Throughout this paper, all surfaces will be assumed to be connected, and all embed\-dings are smooth.
    
    The complex conjugation $\conj:\CP^2\to\CP^2$ is defined in homogeneous coordinates by $\conj([z_0:z_1:z_2])=[\overline{z_0}:\overline{z_1}:\overline{z_2}]$, and $\RP^2\subset\CP^2$ is the fix-point set $\mathrm{Fix}(\conj)$. Here, $Q$ will always denote a generic real conic with empty real part (for instance, the Fermat conic given by the equation $z_0^2+z_1^2+z_2^2=0$). In particular, it is a smoothly embedded $2$-sphere $Q\subset\CP^2$ which represents the homology class $[Q]=2[\CP^1]$ in $H_2(\CP^2;\ZZ)\cong\ZZ$, the choice of a generator being the homology class of any complex line.
    \subsection*{\texorpdfstring{Flexible and $Q$-flexible curves}{Flexible and Q-flexible curves}}

A \textit{real plane algebraic curve} is a real \textit{nonsingular} homogenous polynomial $X\in\RR[x_0:x_1:x_2]$. By the \textit{real part} of the curve, we mean the set $$\RR X=\left\{[x_0:x_1:x_2]\in\RP^2\mid X(x_0,x_1,x_2)=0\right\},$$
and by the \textit{complexification} of the curve, we mean $$\CC X=\left\{[z_0:z_1:z_2]\in\CP^2\mid X(z_0,z_1,z_2)=0\right\}.$$
Evidently, $\CC X$ is invariant under complex conjugation. If $m=\deg(X)\geqslant1$, then\linebreak
we see that $[\CC X]=m[\CP^1]\in H_2(\CP^2;\ZZ)$, and that $\CC X$ is a surface of genus\linebreak$g=(m-1)(m-2)/2$. Also, the tangent space of the complex curve is related~to that of the real curve in the following sense: $$\text{for all }x\in\RR X,\quad T_x\CC X=T_x\RR X\oplus{\bf i}\cdot T_x\RR X.$$
We define a \textit{flexible} curve, in the sense of Viro \citeyear{Vir84}, as follows:

\begin{definition}\label{def:flexible}
    Let $F$ be a closed oriented surface embedded in $\CP^2$. The surface~$F$ is called a degree $m\geqslant1$ \textit{flexible curve} if
    \begin{enumerate}[label=(\roman*),leftmargin=21pt,itemsep=3pt]
        \item $\conj(F)=F$;
        \item $\chi(F)=-m^2+3m$;
        \item $[F]=m[\CP^1]\in H_2(\CP^2;\ZZ)$;
        \item for all $x\in\RR F,\;T_xF=T_x\RR F\oplus{\bf i}\cdot T_x\RR F$, where $\RR F=F\cap\RP^2$.
    \end{enumerate}
\end{definition}

The following classical results are known for flexible curves (see \cite{Rok78}):
\begin{enumerate}[label=(\roman*),leftmargin=21pt,itemsep=3pt]
    \item If $b_0$ denotes the 0-th Betti number, then ${b_0(\RR F)\leqslant g\!+\!1\!=\!(m\!-\!1)(m\!-\!2)/2\!+\!1}$. Curves with $b_0(\RR F)=g+1$ are called \textit{$M$-curves}. On the other hand, curves with $b_0(\RR F)=0$ if $m$ is even and $b_0(\RR F)=1$ if $m\geqslant 3$ is odd are called\linebreak
    \textit{minimal} curves (note that there are no minimal curves in degree one).
    \item If $m$ is even, then each component of $\RR F$ is contractible in $\RP^2$, and if $m$ is odd, all but one components of $\RR F$ are. Contractible components of $\RR F$ are called \textit{ovals}. An odd degree curve can never have $\RR F=\varnothing$ (it always has the non-contractible component).
    \item A flexible curve $F$ is said to be a \textit{type I} (\textit{resp.} \textit{type II}) curve if $F\smallsetminus\RR F$ has two connected components (\textit{resp.} is connected). An $M$-curve is always type I,\linebreak
    and a minimal curve is always type II.
\end{enumerate}

What makes flexible curves so different from algebraic curves is the lack of\linebreak
rigidity, mainly seen with the Bézout theorem, which, in particular, implies that a degree $m$ algebraic curve generically intersects $Q$ transversely in exactly $2m$ points.

\begin{definition}
    A flexible curve $F$ of degree $m\geqslant1$ is called \textit{$Q$-flexible} if $F\tcap Q$ consists of $2m$ points, necessarily swapped pairwise by complex conjugation.
\end{definition}

    \subsection*{Two double branched covers}

There is a well-known diffeomorphism between\linebreak$\CP^2/\conj$ and $\bS^4$ (see \cite{Kui74}). We denote the associated (cyclic) 2-fold branched cover as $p:(\CP^2,\RP^2)\to(\bS^4,\cR)$, with $\cR=p(\RP^2)$ an embedded $\RP^2$ in $\bS^4$. We also let $\cQ=p(Q)$, the image of the preferred conic under that branched cover. From the fact that $Q$ does not intersect the branch locus $\RP^2$, we see that the restriction $p:Q\to\cQ$ is an unbranched 2-fold cover, with the conjugation being an orientation-preserving involution generating the group of deck transformations. As such, we see that $\cQ$ is also an embedded $\RP^2$ in $\bS^4$.

Given a closed embedded surface $F^2$ in a (closed oriented) 4-manifold $X^4$, we denote as $e(X,F)$ the normal Euler number of the embedding $F\subset X$; that is,\linebreak
the Euler class of the normal bundle $\nu F$. It is also equal to the self-intersection number $F\cdot F$, which is defined by counting signed intersection points between $F$ and a small perturbation $F'$ of $F$ in the normal direction. If $F$ is oriented, then it corresponds to the intersection form of $X$ evaluated on $[F]\in H_2(X;\ZZ)$.

\begin{proposition}
    We have the following normal Euler numbers: $$e(\CP^2,\RP^2)=-1,\quad e(\CP^2,Q)=+4,\quad e(\bS^4,\cR)=-2\quad\text{and}\quad e(\bS^4,\cQ)=+2.$$
\end{proposition}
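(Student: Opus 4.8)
The plan is to verify these four normal Euler numbers separately, organizing the computation around the two basic objects in $\CP^2$ and transporting everything through the branched cover $p$. First I would pin down $e(\CP^2,\RP^2)=-1$: this is the classical fact that $\RP^2$ sits inside $\CP^2$ with normal bundle the tautological line bundle over $\RP^1\subset\RP^2$ squared appropriately; concretely, one takes the standard Veronese-type perturbation $\RR F'$ of $\RP^2$ (coming from perturbing a real conic, or just the zero locus of a generic real quadratic form restricted to $\RP^2$) and counts that it meets $\RP^2$ in one point with sign $-1$, or equivalently one invokes that the self-intersection of $\RP^2$ in $\CP^2$ is $-1$ as tabulated in standard references on embedded surfaces in 4-manifolds. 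Next, $e(\CP^2,Q)=+4$ is immediate from the hypotheses collected above: $Q$ is an oriented embedded sphere with $[Q]=2[\CP^1]$, so by the formula recalled just before the proposition, $e(\CP^2,Q)=[Q]\cdot[Q]=4[\CP^1]\cdot[\CP^1]=4$.

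For the two quantities downstairs, I would use the standard relation between normal Euler numbers under a cyclic $2$-fold branched cover. For $\cQ=p(Q)$: since $Q$ is disjoint from the branch locus $\RP^2$, the map $p\colon Q\to\cQ$ is an honest (unbranched) double cover, so the normal bundle $\nu\cQ$ pulls back to $\nu Q$, giving $e(\bS^4,\cQ)=\tfrac12\, e(\CP^2,Q)=\tfrac12\cdot 4=+2$. For $\cR=p(\RP^2)$: here $\RP^2$ \emph{is} the branch locus, and along the ramification the covering behaves locally like $z\mapsto z^2$ in the normal directions, so the normal Euler numbers are related by $e(\CP^2,\RP^2)=2\,e(\bS^4,\cR)+(\text{branching correction})$. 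The cleanest way to get the correction right is to note that a small pushoff $\cR'$ of $\cR$ in $\bS^4$ meets $\cR$ in $e(\bS^4,\cR)$ points, and its preimage under $p$ is a pushoff of $\RP^2$ that meets $\RP^2$ in $2\,e(\bS^4,\cR)$ points — but with an extra contribution because the branched cover doubles the ramification locus; carrying this out gives $e(\CP^2,\RP^2)=2\,e(\bS^4,\cR)+3$, whence $e(\bS^4,\cR)=\tfrac12(-1-3)=-2$. An alternative, perhaps safer bookkeeping is to use that $\cR\subset\bS^4$ is known to be the standard $\RP^2$ with normal Euler number $\pm 2$ (the two embeddings of $\RP^2$ in $S^4$ with minimal genus have $e=\pm 2$), and then fix the sign by the orientation-reversing nature of the construction consistent with $e(\CP^2,\RP^2)=-1$.

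The main obstacle I anticipate is getting the branching correction term in the second covering formula exactly right, including its sign — the naive ``$e$ halves'' rule applies only to $\cQ$, and for $\cR$ one must account for the fact that pushing $\cR$ off itself and pulling back does not simply multiply intersection counts by the degree, because near the branch locus the local model $z\mapsto z^2$ contributes an Euler-number defect equal (up to sign) to the Euler characteristic of $\RP^2$. I would therefore treat the $\cR$ computation with a concrete local model: choose a generic real quadratic perturbation so that $\RR F'$ and $\RP^2$ meet transversally, count signed intersections downstairs in $\bS^4$ using the explicit Kuiper diffeomorphism $\CP^2/\conj\cong\bS^4$, and cross-check against the already-established value $e(\CP^2,\RP^2)=-1$ to make sure the signs are consistent. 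Once those two consistency checks agree, all four values follow.
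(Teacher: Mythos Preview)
Your treatment of $e(\CP^2,Q)=4$ and $e(\bS^4,\cQ)=2$ is correct and essentially matches the paper. For $e(\CP^2,\RP^2)=-1$, your perturbation idea could be made to work, but the paper's argument is much cleaner and worth knowing: $\RP^2$ is Lagrangian in $\CP^2$, so the normal and tangent bundles are anti-isomorphic, giving $e(\nu\RP^2)=-e(T\RP^2)=-\chi(\RP^2)=-1$ in one line.

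The genuine gap is in your handling of $e(\bS^4,\cR)$. The formula you propose, $e(\CP^2,\RP^2)=2\,e(\bS^4,\cR)+3$, is not correct as a general relation, and the ``$+3$'' has no justification --- note that it does not even agree with your own suggestion that the defect should be $\pm\chi(\RP^2)=\pm 1$. The actual relation (stated and proved as a lemma immediately after this proposition in the paper) goes the \emph{other} way and carries \emph{no} correction term: when the surface downstairs is the branch locus itself, one has
\[
e(\CP^2,\RP^2)=\tfrac{1}{2}\,e(\bS^4,\cR).
\]
The reason is the local model $w\mapsto w^2$ in the normal direction: the normal disc bundle upstairs maps to the normal disc bundle downstairs by fibrewise squaring, so a section downstairs with $k$ zeros pulls back to a section upstairs with $k$ zeros, each now of half the local multiplicity; equivalently, a transverse push-off of $\RP^2$ projects to a push-off of $\cR$ whose full preimage is \emph{two} copies of the original push-off, and unwinding this (as the paper does) gives the factor $\tfrac12$. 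From $-1=\tfrac12\,e(\bS^4,\cR)$ one reads off $e(\bS^4,\cR)=-2$ directly. Your fallback of quoting that the standard $\RP^2\subset\bS^4$ has $e=\pm 2$ and then fixing the sign is fine as a sanity check, but it is not a proof and leaves the sign determination vague; the clean branched-cover lemma resolves everything at once.
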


\begin{proof}
    The conic $Q$ is oriented and $[Q]=2[\CP^1]\in H_2(\CP^2;\ZZ)$, so we obtain\linebreak
    $e(\CP^2,Q)=+4$. Next, because $\RP^2$ is Lagrangian in $\CP^2$, we have that the normal bundle $\nu\RP^2$ and the tangent bundle $T\RP^2$ are anti-isomorphic, and thus, for the Euler class, $e(\nu\RP^2)=-e(T\RP^2)=-\chi(\RP^2)=-1$. Finally, the computations of $e(\bS^4,\cR)$ and $e(\bS^4,\cQ)$ come from the next lemma.
\end{proof}

\begin{lemma}\label{lem:branched-cover-euler-number}
    Given a 2-fold branched cover $f:(Y^4,\tilde{B}^2)\to(X^4,B^2)$, and given\linebreak
    $F$ an embedded closed surface in $X$, we denote as $\tilde{F}$ the lift $p^{-1}(F)$.
    \begin{itemize}[leftmargin=21pt,itemsep=3pt]
        \item[\textup{(i)}] If $F\tcap B$, possibly with $F\cap B=\varnothing$, then $e(Y,\tilde{F})=2e(X,F)$.
        \item[\textup{(ii)}] If $F\subset B$, then $e(Y,\tilde{F})=\frac{1}{2}e(X,F)$.
    \end{itemize}
\end{lemma}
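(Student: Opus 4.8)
The plan is to analyze the two cases separately by working locally near the branch locus and comparing normal bundles upstairs and downstairs.

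For case (i), where $F \tcap B$ (transverse intersection, possibly empty), I would first observe that away from $B$ the branched cover $f$ restricts to an honest 2-fold covering map, and near a point of $F \cap B$ the map looks locally like the model $(\CC^2, \CC\times\{0\}) \to (\CC^2, \CC\times\{0\})$, $(z,w)\mapsto(z,w^2)$. Since $F$ meets $B$ transversely in isolated points, a generic small perturbation $F'$ of $F$ can be chosen so that $F' \cap B = \varnothing$, and then $\tilde F' = f^{-1}(F')$ is a genuine 2-fold cover of $F'$ which serves as a perturbation of $\tilde F$ in the normal direction. Each transverse intersection point of $F$ with $F'$ downstairs that lies off $B$ lifts to two intersection points of $\tilde F$ with $\tilde F'$; the orientations and signs are preserved because $f$ is a local diffeomorphism there, and intersection points on $B$ contribute zero on both sides by genericity. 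Hence $e(Y,\tilde F) = \tilde F \cdot \tilde F = 2(F\cdot F) = 2e(X,F)$. The one subtlety worth spelling out is that this doubling is exact even when $F$ passes through $B$: one must check that no spurious contribution arises near the branch points, which follows from the explicit local model above and the fact that the perturbation can be pushed off $B$.

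For case (ii), where $F \subset B$, so that $f$ is branched precisely along $F = \tilde F$ in the sense that $\tilde F = f^{-1}(F)$ maps homeomorphically onto $F$, the normal bundle $\nu_Y \tilde F$ sits inside $\nu_Y \tilde B$, and the branched-cover structure means that in the normal-to-$B$ direction $f$ is the fiberwise squaring map $w \mapsto w^2$ on the complex normal line bundle, while in the direction tangent to $B$ but normal to $F$ the map is a local diffeomorphism. Thus $\nu_Y\tilde F \cong L \oplus M$ where $M \cong \nu_B F$ (the normal bundle of $F$ inside $B$, pulled back via the homeomorphism $\tilde F \to F$) and $L$ is the "square root" of the restriction $\nu_X B|_F$: a complex line bundle with $L^{\otimes 2} \cong \nu_X B|_F$, hence $e(L) = \tfrac12 e(\nu_X B|_F)$. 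Additivity of the Euler class under direct sum gives $e(Y,\tilde F) = e(L) + e(M) = \tfrac12 e(\nu_X B|_F) + e(\nu_B F)$. Finally, since $B$ itself is a surface and $F = B$ (or $F$ is an open-and-closed piece of $B$, so $\nu_B F = 0$), the term $e(\nu_B F)$ vanishes and $\nu_X B|_F = \nu_X B = \nu_X F$, yielding $e(Y,\tilde F) = \tfrac12 e(X,F)$.

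\emph{The main obstacle} I anticipate is making the "square root of the normal bundle" step in case (ii) rigorous: one needs to know that the branched cover genuinely endows $\nu_X B$ with a complex line bundle structure whose squaring realizes the covering, and that $f^*\nu_X B$ decomposes as claimed. This is standard for cyclic branched covers — the local model $w\mapsto w^{2}$ globalizes because the branch locus is a smooth surface and the monodromy around it is $\ZZ/2$ — but it deserves a careful sentence or two. In our application $B$ is $\RP^2 = \cR$ or $\cQ$ and the relevant $F$ is all of $B$, so $\nu_B F$ is the zero bundle and the clean formulas $e(\bS^4,\cR) = \tfrac12 e(\CP^2,\RP^2)\cdot(\text{sign})$ etc. require only tracking one subtlety: the cover $p:(\CP^2,\RP^2)\to(\bS^4,\cR)$ is branched \emph{downstairs-to-upstairs} in the opposite direction from the lemma's convention, so one applies part (ii) with the roles reversed, giving $e(\CP^2,\RP^2) = \tfrac12 e(\bS^4,\cR)$ — wait, rather $e(\bS^4,\cR)$ is computed from $e(\CP^2,\RP^2)=-1$ by the relation $-1 = \tfrac12 e(\bS^4,\cR)$, so $e(\bS^4,\cR) = -2$; and similarly part (i) applied to $Q \subset \CP^2$ (which misses the branch locus $\RP^2$) gives $e(\bS^4,\cQ)$ from $e(\CP^2,Q) = +4$ via $+4 = 2 e(\bS^4,\cQ)$, hence $e(\bS^4,\cQ) = +2$. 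I would double-check the direction of each branched cover against the statement of the lemma before invoking it.
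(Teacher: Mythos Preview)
Your argument for case~(i) matches the paper's, but contains an overstatement: you cannot in general choose the perturbation $F'$ with $F'\cap B=\varnothing$, since the mod~$2$ intersection number $[F]\cdot[B]$ is preserved under small perturbation. What is actually needed---and what you correctly invoke a sentence later, and what the paper says---is only that the \emph{self-intersection points} $F\cap F'$ lie off~$B$, which is generic since $F\cap F'$ is finite and $B$ has codimension two.

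For case~(ii) your route is genuinely different. The paper deduces~(ii) from~(i) by a symmetry trick: take a small transverse push-off $\tilde F'$ of $\tilde F$ \emph{upstairs}, set $F'=f(\tilde F')$, and observe that the full lift $f^{-1}(F')$ is $\tilde F'\cup\tau(\tilde F')$ where $\tau$ generates $\Aut(f)$; applying~(i) to $F'$ and using that $\tau$ is an orientation-preserving diffeomorphism then lets one solve for $e(\tilde F)$. This avoids any appeal to complex line bundle structures or tensor squares, and in particular sidesteps exactly the obstacle you flag: in the paper's applications the branch locus is $\RP^2$ or a Klein bottle, whose normal bundle is non-orientable, so the identification $L^{\otimes 2}\cong\nu_XB|_F$ does not make literal sense without passing to twisted coefficients. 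Your normal-bundle picture is the more structural argument---it is how one typically handles $d$-fold cyclic covers---while the paper's involution trick is the quick $d=2$ shortcut that works uniformly regardless of orientability of~$B$.
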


\begin{proof}
    One has to inspect what happens in each case individually. In the first, note that the lift of a perturbation is a perturbation of the lift, and one can ensure that~the self-intersection points occur away from the ramification locus $B$. As such, each of these points lifts to two intersections, and the orientations agree because $f$ is orientation-preserving.
    
    The second case can be deduced from the first. Let $\tilde{F}'$ be a small transverse perturbation of $\tilde{F}$. Letting $\tau:Y\to Y$ denote the involution that spans $\Aut(f)$, and letting $F'=f(\tilde{F}')$, we see that $F'$ is a perturbation of $F$ and $\tilde{F}'\cup\tau(\tilde{F}')$ is the lift\linebreak
    of $F'$. By the first case, we obtain $e(\tau(\tilde{F}'))=e(\tilde{F}')=2e(F')=2e(F)$. Moreover, we have $2e(\tilde{F})=e(\tilde{F}'\cup\tau(\tilde{F}'))=e(\tilde{F}')+e(\tau(\tilde{F}'))=4e(F)$.
\end{proof}

We now wish to consider the 2-fold branched cover of $\bS^4$, ramified over $\cQ$ this time. It is possible to make some computations to find an orientation-reversing\linebreak
involution of $\bS^4$ which swaps $\cR$ and $\cQ$. Alternatively, taking any orientation-reversing free involution of $\bS^4$, this maps $\cR$ to a projective plane with normal Euler number $+2$, and this is always isotopic to $\cQ$ in $\bS^4$. Tracking this isotopy produces the involution needed. As such, we see that the smooth $4$-manifold obtained as the double branched cover of $\bS^4$ ramified along $\cQ$ is diffeomorphic\footnote{\setstretch{.5}In fact, it is sufficient to obtain that the double branch cover of $\bS^4$ ramified over $\cQ$ is a homol-\linebreak
ogy $\CPb^2$, as will be the case for curves on quadrics in a later section.} to $\CPb^2$. We let $\tilde{p}:\CPb^2\to\bS^4$ denote that double branched cover.

Define $\Qb=\tilde{p}^{-1}(\cR)$. We see that $\RPb^2$ and $\Qb$ are respectively embeddings of~$\RP^2$ and $\bS^2$ in $\CPb^2$. Using \cref[Lemma]{lem:branched-cover-euler-number} again, we can compute the normal Euler numbers.

\begin{proposition}
    We have $e(\CPb^2,\RPb^2)=+1$ and $e(\CPb^2,\Qb)=-4$.
\end{proposition}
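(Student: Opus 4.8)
The plan is to obtain both numbers as immediate consequences of \cref[Lemma]{lem:branched-cover-euler-number} applied to the branched cover $\tilde p\colon\CPb^2\to\bS^4$, whose ramification locus downstairs is $\cQ$ and upstairs is $\RPb^2=\tilde p^{-1}(\cQ)$. So before invoking the lemma I would record the two structural facts I need: that $\RPb^2$ is the branch locus of $\tilde p$ (hence $\tilde p^{-1}(\cQ)=\RPb^2$), and that $\cR$ and $\cQ$ are disjoint in $\bS^4$. The latter holds because $Q$ has empty real part, so $Q\cap\RP^2=\varnothing$ in $\CP^2$; and since the fibers of $p$ are exactly the conjugation orbits, $p(q)=p(r)$ with $q\in Q$ and $r\in\RP^2$ would force $q\in\{r,\conj(r)\}=\{r\}$, i.e. $q=r\in Q\cap\RP^2=\varnothing$.

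For $e(\CPb^2,\RPb^2)$ I would use case (ii) of \cref[Lemma]{lem:branched-cover-euler-number} with $F=B=\cQ$ and lift $\tilde F=\tilde p^{-1}(\cQ)=\RPb^2$; together with $e(\bS^4,\cQ)=+2$ from the previous proposition this gives $e(\CPb^2,\RPb^2)=\tfrac12(+2)=+1$. For $e(\CPb^2,\Qb)$ I would use case (i) with $F=\cR$ and $B=\cQ$: by the disjointness just observed we have $F\cap B=\varnothing$ (this is also what makes $\tilde p|_{\Qb}\colon\Qb\to\cR$ an unbranched double cover, so that $\Qb$ is the connected one, i.e. the $\bS^2$ claimed), and with $e(\bS^4,\cR)=-2$ we get $e(\CPb^2,\Qb)=2(-2)=-4$.

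There is no genuine obstacle here — the real content has already been isolated into \cref[Lemma]{lem:branched-cover-euler-number} — so the only delicate point is bookkeeping: matching each of $\RPb^2$ and $\Qb$ to the correct case (branch locus versus surface disjoint from it), not confusing $\RPb^2$ with the lift of $\cR$ despite its suggestive name, and noting that $\tilde p$ is orientation-preserving so that the sign computations inside the lemma apply verbatim.
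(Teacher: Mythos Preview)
Your proposal is correct and is exactly the approach the paper takes: it simply invokes \cref[Lemma]{lem:branched-cover-euler-number} for the cover $\tilde p$, using case (ii) for $\RPb^2=\tilde p^{-1}(\cQ)$ and case (i) for $\Qb=\tilde p^{-1}(\cR)$, together with the previously computed values $e(\bS^4,\cQ)=+2$ and $e(\bS^4,\cR)=-2$. Your added justification that $\cR\cap\cQ=\varnothing$ (from $\RR Q=\varnothing$) is a nice explicit check that the paper leaves implicit.
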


In Figure \ref{fig:diagram} we depict a summary of the different maps in play.

\begin{figure}[t]
    \centering
    \begin{tikzcd}
    	{\RP^2} & {\CP^2} & Q \\
    	\cR & {\bS^4} & \cQ \\
    	\Qb & {\CPb^2} & {\RPb^2}
    	\arrow["{\tilde{p}}"', from=3-2, to=2-2]
    	\arrow["p"', from=1-2, to=2-2]
    	\arrow["{||}"{description}, draw=none, from=1-1, to=2-1]
    	\arrow["{||}"{description}, draw=none, from=3-3, to=2-3]
    	\arrow[shorten <=1pt, shorten >=1pt, two heads, from=1-3, to=2-3]
    	\arrow[shorten <=2pt, shorten >=2pt, two heads, from=3-1, to=2-1]
    	\arrow["\subset"{description}, draw=none, from=1-1, to=1-2]
    	\arrow["\supset"{description}, draw=none, from=1-3, to=1-2]
    	\arrow["\supset"{description}, draw=none, from=2-3, to=2-2]
    	\arrow["\subset"{description}, draw=none, from=2-1, to=2-2]
    	\arrow["\subset"{description}, draw=none, from=3-1, to=3-2]
    	\arrow["\supset"{description}, draw=none, from=3-3, to=3-2]
    \end{tikzcd}
    \caption{The two branched coverings of interest and their associ\-ated branch loci. The arrows marked $\longrightarrow\!\!\!\!\!\!\!\to$ denote an unbranched 2-fold cover from a 2-sphere to a real projective plane.}\label{fig:diagram}
\end{figure}

\section{The Arnold surface of an odd degree flexible curve}\label{sec:arnold}
    For a flexible curve $F\subset\CP^2$, let $A^+(F)=F/\conj=p(F)$. It is an embedded surface in $\bS^4$ with boundary $\partial A^+(F)\subset\cR$ identified with $\RR F$, and it is orientable\linebreak
if and only if $F$ is type I.

If the curve has \textit{even} degree, then $\RR F$ is null-homologous, and thus exactly one component of $\RP^2\smallsetminus\RR F$ is non-orientable (it is a punctured Möbius band). Let $\RP^2_\pm$ be the closure of the two possible subsets of $\RP^2\smallsetminus\RR F$ that have $\partial\RP^2_\pm=\partial F$. We choose $\RP^2_-$ to be the one containing the punctured Möbius band (i.e. $\RP^2_+$ is orientable, and $\RP^2_-$ has exactly one non-orientable component). In the case where $F$ is an \textit{algebraic} curve of even degree, the polynomial $P$ defining it can be chosen in such a way that $$\RP^2_\pm=\{[x_0:x_1:x_2]\in\RP^2\mid \pm P(x_0,x_1,x_2)\geqslant0\}.$$
In \cref[Figure]{fig:RP2plus}, we depict such an example for an algebraic curve.

\begin{figure}[ht]
    \centering
    \includegraphics[width=.3\textwidth]{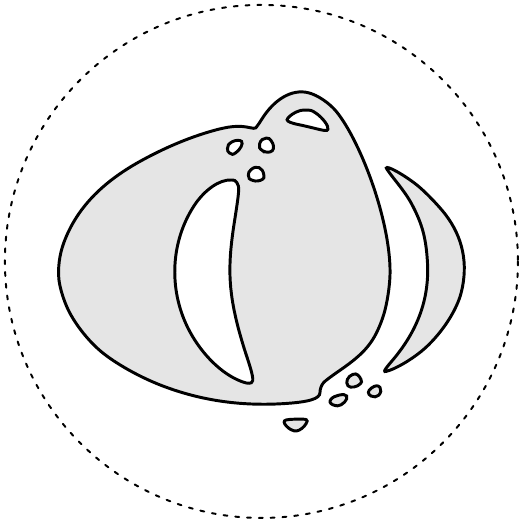}
    \caption{The set $\RP^2_+$, shaded, for Gudkov's $M$-sextic.}
    \label{fig:RP2plus}
\end{figure}

\begin{definition}
    Given a flexible curve $F$ of even degree, we let $$\cA(F)=A^+(F)\cup p(\RP^2_+)\subset\bS^4,$$
    and we call it the \textit{Arnold surface} of $F$.
\end{definition}

In odd degrees, it is not possible to define a surface in this way; we need to go to $\CPb^2$ first. Take $F$ to be a flexible curve of odd degree. We denote as $J\subset\RP^2$ the non-contractible component of $\RR F$ and as $o$ an oval of $\RR F$. Let ${J^+=p(J)}$,\linebreak
$o^+=p(o)$, and set $\overline{J}=\tilde{p}^{-1}(J^+)\subset\Qb$ and $\overline{o}=\tilde{p}^{-1}(o^+)\subset\Qb$. Observe that $\tilde{p}$\linebreak
restricts to an unbranched 2-fold covering $\tilde{p}:\overline{J}\to J^+$ and $\tilde{p}:\overline{o}\to o^+$.

\begin{proposition}
    We have $\overline{J}\cong\bS^1$ and $\overline{o}\cong\bS^1\sqcup\bS^1$, and the unbranched coverings $\tilde{p}:\overline{J}\to J^+$ and $\tilde{p}:\overline{o}\to o^+$ are respectively the non-trivial and the trivial 2-fold coverings of the circle.
\end{proposition}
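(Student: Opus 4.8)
The plan is to reduce the statement to the elementary classification of double covers of a circle sitting inside the orientation double cover $\bS^2\to\RP^2$.

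First I would identify the covering $\tilde{p}\colon\Qb\to\cR$ precisely. It is an unbranched $2$-fold covering (this uses that $Q$ has empty real part, so $\RP^2\cap Q=\varnothing$ in $\CP^2$, whence $\cR\cap\cQ=\varnothing$ in $\bS^4$ and $\Qb=\tilde{p}^{-1}(\cR)$ avoids the branch locus $\cQ$), with $\Qb\cong\bS^2$ connected and $\cR\cong\RP^2$. Since $\RP^2$ admits a unique connected double cover, namely its orientation (universal) cover, $\tilde{p}|_{\Qb}$ must be that cover; equivalently, it is classified by the nonzero class $w\in H^1(\cR;\ZZ/2)\cong\ZZ/2$, i.e.\ by the isomorphism $\phi\colon\pi_1(\cR)\cong\ZZ/2\xrightarrow{\ \sim\ }\ZZ/2$.

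Next I would pin down $J^+$ and $o^+$ as curves in $\cR$. Since $p$ maps its branch locus $\RP^2$ homeomorphically onto $\cR$, and $J$ is the non-contractible component of $\RR F$ while $o$ is an oval, the circles $J^+=p(J)$ and $o^+=p(o)$ are, respectively, a non-contractible and a contractible embedded circle in $\cR\cong\RP^2$. Now the restriction of the double cover $\tilde{p}|_{\Qb}$ over an embedded circle $C\subset\cR$ is the double cover of $C\cong\bS^1$ classified by $\phi\circ(\iota_C)_*\colon\pi_1(C)=\ZZ\to\ZZ/2$, where $\iota_C\colon C\hookrightarrow\cR$ — equivalently, by the pairing $\langle w,[C]\rangle$, which is $0$ or $1$ according as $[C]$ is zero or nonzero in $H_1(\cR;\ZZ/2)\cong\ZZ/2$. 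For $C=J^+$ the inclusion is nonzero on $\pi_1$ modulo $2$, so this homomorphism is onto: the restricted cover is the \emph{connected} double cover of the circle, which gives $\overline{J}=\tilde{p}^{-1}(J^+)\cong\bS^1$ and $\tilde{p}\colon\overline{J}\to J^+$ the non-trivial $2$-fold covering. For $C=o^+$ the inclusion kills $\pi_1$, so the homomorphism is zero and the restricted cover is trivial, giving $\overline{o}\cong\bS^1\sqcup\bS^1$ and $\tilde{p}\colon\overline{o}\to o^+$ the trivial $2$-fold covering.

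I do not anticipate a real obstacle. The two points that need care are checking that $\cR$ and $\cQ$ are disjoint (so that $\tilde{p}|_{\Qb}$ is genuinely unbranched, hence a covering in the usual sense), and correctly invoking the principle that the restriction of a double cover over a subspace is classified by pulling back its classifying class; once these are in place, distinguishing $\overline{J}$ from $\overline{o}$ is just the observation that a non-contractible circle in $\RP^2$ pairs nontrivially with $w\in H^1(\RP^2;\ZZ/2)$ while a contractible one pairs trivially.
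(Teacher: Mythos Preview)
Your argument is correct, and it takes a genuinely different route from the paper's. You identify the unbranched cover $\tilde{p}|_{\Qb}\colon\Qb\to\cR$ abstractly as the orientation double cover $\bS^2\to\RP^2$ (using connectedness of $\Qb$ and uniqueness of connected double covers of $\RP^2$), and then read off the restricted covers over $J^+$ and $o^+$ purely from whether those circles are null-homotopic in $\cR$, via the classifying class $w\in H^1(\cR;\ZZ/2)$. This is clean and entirely self-contained once one knows $\Qb\cong\bS^2$.

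The paper instead argues more geometrically, using auxiliary \emph{membranes} rather than the ambient $\Qb$. For $J$, it isotopes $J$ to the real part of a complex line $\CC X$; then $G^+=p(\CC X)$ is a disc with $\partial G^+=J^+$ meeting $\cQ$ transversely in a single point (the image of the two conjugate points of $\CC X\tcap Q$), so $\tilde{p}^{-1}(G^+)\to G^+$ is the branched double cover of a disc with one interior branch point, which forces the boundary cover to be the nontrivial one. For an oval $o$, the bounding disc $D\subset\RP^2$ misses $Q$, so $p(D)$ misses $\cQ$ and lifts to two disjoint discs in $\Qb$, giving the trivial cover on $o^+$. Your approach is quicker and requires no auxiliary surfaces or transversality with $\cQ$; the paper's has the merit of being constructive (it actually exhibits the bounding discs upstairs) and of foreshadowing the $F\tcap Q$ bookkeeping that recurs throughout. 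Incidentally, the paper does later invoke exactly your viewpoint—that $\tilde{p}\colon\Qb\to\cR$ is the antipodal quotient—in the proof of the subsequent proposition on mutual position of ovals.
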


\begin{proof}
    Isotope $J$ in $\RP^2$ to be $J=\RR X$ with $X\in\RR[x_0:x_1:x_2]$ a degree-one\linebreak
    nonsingular homogeneous polynomial. Note that we do not need to look at what happens \textit{outside} of $\RP^2$ for the claim. In particular, $\CC X\tcap Q$ is two points. Letting $G^+=p(\CC X)$ and $\overline{G}=\tilde{p}^{-1}(G^+)$, we obtain $$J^+=\partial G^+\quad\text{and}\quad\overline{J}=\partial\overline{G}.$$
    Moreover, $G^+\tcap\cQ$ is one point, for the two points in $\CC X\tcap Q$ are swapped pairwise by conjugation. In particular, the covering $\tilde{p}:\overline{G}\to G^+$ is a 2-fold branched cover of the disc $G^+$ (for in degree one, $\CC X$ is a sphere and $\RR X$ is type I), with one\linebreak
    branch point in its interior. This is unique, and it is known to induce the non-trivial cover on the boundary, so the first claim follows (see \cref[Figure]{fig:branching-ovals-and-J}).

    \begin{figure}[t]
        \centering
        \begin{tikzpicture}
            \node[inner sep=0pt] at (0,0) {\includegraphics{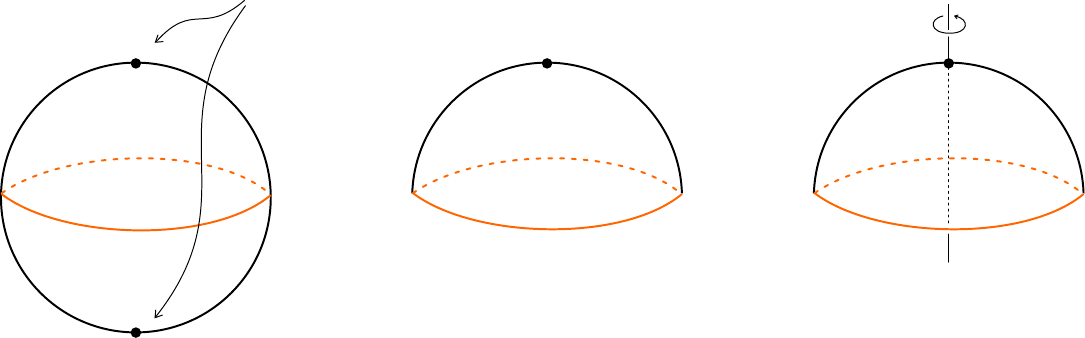}};
            \node at (-2.1,0) {$\overset{p}{\longrightarrow}$};
            \node at (2.1,0) {$\overset{\tilde{p}}{\longleftarrow}$};
            \node at (-4.9,-.8) {$J$};
            \node at (-2.2,1.8) {$\CC X\tcap Q$};
            \node at (.2,-.9) {$J^+$};
            \node at (4.8,-.8) {$\overline{J}$};
        \end{tikzpicture}
        \caption{The restrictions $p:\CC X\to G^+$ and $\tilde{p}:\overline{G}\to G^+$, the\linebreak
        second one being a branched covering. On the right, the rotation by 180$^\circ$ generates the group of deck transformations.}
        \label{fig:branching-ovals-and-J}
    \end{figure}
    
    For the other claim, an oval $o$ bounds a disc $D$ embedded in $\RP^2$, and is thus disjoint from $Q$. Therefore, the disc $D/\conj\subset\cR$ bounded by $o^+$ lifts in $\CPb^2$ to\linebreak
    two disjoint discs in $\Qb$. This means that $p:\bar{o}\to o^+$ is the trivial covering, and $\bar{o}$ is two circles.
\end{proof}

We let $\RRb F$ be the set $$\overline{J}\cup\bigcup_{o\text{ oval}}\overline{o}\subset\Qb.\vspace{10pt}$$
The previous statement implies that every oval of $\RR F$ gets doubled in $\RRb F$, whereas the non-contractible component $J$ does not.

\begin{proposition}\label{prop:position-ovals}
    Let $o_1$ and $o_2$ be ovals of $\RR F$.
    \begin{itemize}[leftmargin=21pt,itemsep=3pt]
        \item[\textup{(i)}] The set $\Qb\smallsetminus\overline{J}$ is two open discs, each containing one of the two components\linebreak
        of $\overline{o}_1$.
        \item[\textup{(ii)}] If $o_1\subset o_2$ (where inclusion means that $o_1$ is contained in the orientable\linebreak
        component of $\RP^2\smallsetminus o_2$), then $\overline{o_1}\subset\overline{o_2}$, in the following sense: $\Qb\smallsetminus\overline{o_2}$ has three components, one being a cylinder containing $\overline{J}$, and the other two being discs each containing a component of $\overline{o_1}$.
    \end{itemize}
\end{proposition}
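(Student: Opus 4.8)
The plan is to push everything down to $\cR$ and then pull it back through $\tilde p$. First I would record the three facts that make this work: $p$ restricts to a homeomorphism $\RP^2\to\cR$; the locus $\cR$ is disjoint from the branch surface $\cQ$ (because $Q$ has empty real part), so $\tilde p$ restricts to an \emph{unbranched} double cover $\tilde p\colon\Qb\to\cR$; and since $\Qb\cong\bS^2$ is connected and $\cR\cong\RP^2$, this double cover is the orientation cover. Under $p|_{\RP^2}$, the curve $J^+$ becomes a non-contractible simple closed curve in $\cR$, each $o^+$ becomes a contractible one, and distinct ovals stay disjoint from one another and from $J$.

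For part (i): a non-contractible simple closed curve in $\RP^2$ has complement an open disc, so $\cR\smallsetminus J^+$ is an open disc $\Delta$. Being simply connected, $\Delta$ is covered trivially by $\tilde p$, so $\Qb\smallsetminus\overline J=\tilde p^{-1}(\Delta)$ is a disjoint union of two open discs $\Delta_1\sqcup\Delta_2$, each mapped homeomorphically onto $\Delta$. Since $o_1$ is disjoint from $J$, we get $o_1^+\subset\Delta$, and therefore $\overline{o_1}=\tilde p^{-1}(o_1^+)$ meets each $\Delta_i$ in exactly one circle (the homeomorphic preimage of $o_1^+$), so $\overline{o_1}$ has one component in each of the two discs, matching the earlier proposition.

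For part (ii): the oval $o_2^+$ bounds a disc $D\subset\cR$, so $\cR\smallsetminus o_2^+$ has two components, the open disc $\mathrm{int}\,D$ and an open Möbius band $M$. The hypothesis $o_1\subset o_2$ says precisely that $o_1$ lies in the orientable component of $\RP^2\smallsetminus o_2$, i.e.\ $o_1^+\subset\mathrm{int}\,D$, while $J^+\subset M$ since $J$ is non-contractible. As in part (i), $\tilde p^{-1}(\mathrm{int}\,D)$ is two open discs, each containing one component of $\overline{o_1}$. The one step that needs care is the identification of $\tilde p^{-1}(M)$: the core of $M$ is orientation-reversing in $\RP^2$, hence non-contractible, so $\pi_1(M)\to\pi_1(\RP^2)$ is onto and the cover restricts over $M$ to the connected double cover of the Möbius band, namely an open cylinder. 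Consequently $\Qb\smallsetminus\overline{o_2}$ has exactly three components, two discs and one cylinder, and the cylinder contains $\overline J=\tilde p^{-1}(J^+)$ because $J^+\subset M$.

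The main — essentially the only — obstacle is this last point, ruling out that $\tilde p^{-1}(M)$ is a pair of Möbius bands. It is the same orientation phenomenon that makes $\overline J$ a single circle in the preceding proposition, so one could alternatively just invoke that result; everything else is routine covering-space bookkeeping once the picture in $\cR\cong\RP^2$ is fixed.
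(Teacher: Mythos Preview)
Your proof is correct and follows essentially the same approach as the paper. The paper's own argument is much terser---it simply records that the covering $\tilde p\colon\Qb\to\cR$ is the quotient of the $2$-sphere by the antipodal involution and that $p\colon(\RP^2,\RR F)\to(\cR,p(\RR F))$ is a diffeomorphism of pairs, then leaves the covering-space bookkeeping to the reader; you have written that bookkeeping out explicitly, including the one nontrivial step (identifying $\tilde p^{-1}(M)$ as a cylinder via the surjection $\pi_1(M)\to\pi_1(\cR)$).
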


\begin{proof}
    This comes from the observation that the covering $\tilde{p}:\Qb\to\cQ$ is the quotient of the $2$-sphere $\Qb$ by a fixed-point free involution (that is, the antipodal map), as well as the fact that $p:(\RP^2,\RR F)\to(\cR,p(\RR F))$ is a diffeomorphism of the pair.
\end{proof}

\begin{figure}[b]
    \centering
    \begin{tikzpicture}
        \node[inner sep=0pt] at (0,0) {\includegraphics[width=.55\textwidth]{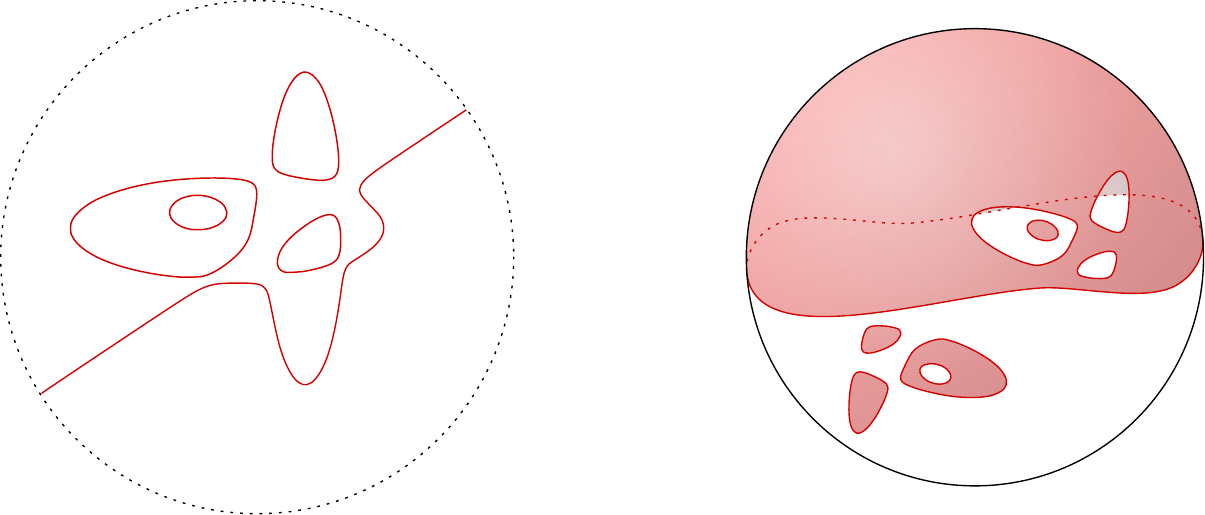}};
        \node at (.2,0) {$\overset{\tilde{p}}{\longleftarrow}$};
        \node at (-1.95,-1.85) {$p(\RR F)\subset\cR$};
        \node at (2.25,-1.85) {$\RRb F\subset\Qb$};
    \end{tikzpicture}
    \caption{The set $\Qb_+$, shaded, for an algebraic curve of degree 7 with real scheme $\langle J\sqcup 2\sqcup1\langle1\rangle\rangle$ (in Viro notation), obtained as a perturbation of three ellipses and a line.}
    \label{fig:arnold-surface}
\end{figure}

This means that the real scheme $\RR F$ can be seen \textit{doubled} in $\RRb F$, as \cref[Figure]{fig:arnold-surface} depicts. Now, define $\overline{A}^+(F)=\tilde{p}^{-1}(A^+(F))=\tilde{p}^{-1}(F/\conj)$. For the analogue\linebreak
of $\RP^2_+$, there are two subsets $\Qb_\pm$ of $\Qb\smallsetminus\RRb F$ that have $\partial\Qb_\pm=\RRb F$, and those are diffeomorphic, exchanged by "symmetry" of $\Qb$ along $\overline{J}$. To be more precise, we denote as $\Qb_\pm$ the closure of these two sets, with a choice involved in labeling one $\Qb_+$ and the other $\Qb_-$.

\begin{definition}
    The \textit{Arnold surface} of a flexible curve $F$ of odd degree is the surface $\cA(F)=\overline{A}^+(F)\cup\Qb_+\subset\CPb^2$.
\end{definition}

\section{Proving the inequality}\label{sec:proof}
    \noindent
    The idea is that we would like to take the 2-fold branched cover of $\CPb^2$ ramified along the Arnold surface. This is not yet possible in this odd degree setting, for the surface $\cA(F)$ is not null-homologous in $H_2(\CPb^2;\ZZ/2)$ (we will see that it has an odd self-intersection number). In fact, this limitation is what led Viro and Zvonilov to consider $h(m)$-sheeted branched covers, where $h(m)$ denotes the highest prime power that divides $m$. However, in our favorable setting, we can perturb the Arnold surface, with the important feature that it preserves the structure of the curve $\RRb F$ inside $\Qb$. One last remark is that we could \textit{not} apply the same construction to~a\linebreak
    $Q$-flexible curve $F\subset\CP^2$ directly, because the conic $Q$ has an \textit{even} homology class.
    \subsection*{Branching over the Arnold surface}

We are first interested in computing the\linebreak
normal Euler number of $\cA(F)\subset\CPb^2$. Recall that if $F\subset X$ is a closed surface in~a closed oriented $4$-manifold, then the Euler class $e(\nu F)\in H^2(F;\ZZ_w)$ corresponds to the self-intersection of $F$ (here, $\ZZ_w$ means coefficients twisted by $w_1(\nu F)$, and $w_1(X)=0$ implies $w_1(\nu F)=w_1(F)$, from which twisted Poincaré duality readily gives $H^2(F;\ZZ_w)\cong\ZZ$).

In the case where $\partial F\neq\varnothing$ however, one needs to choose a fixed nonvanishing section $\theta$ of $\nu F|_{\partial F}$, and consider a relative Euler class (see \cite{Sha73}): $$e_\theta(X,F)=e_\theta(\nu F)\in H^2(F,\partial F;\ZZ_w)\cong\ZZ.$$
This Euler class corresponds to the integer obstruction to extend this section $\theta$ to the whole $\nu F$. However, if one needs to glue two surfaces $F_1$ and $F_2$ along their common boundary $\partial F_1=\partial F_2$ and compute the Euler number of $F_1\cup_\partial F_2$ in terms of relative Euler numbers of $F_1$ and $F_2$, there are two things to be careful about:
\begin{enumerate}[leftmargin=21pt,itemsep=3pt]
    \item The bundle $\Lambda=(\nu F_1\cap\nu F_2)|_{\partial F_i}$ over $\partial F_i$ needs to be rank one.
    \item This bundle $\Lambda$ needs to have a non-vanishing section $\theta$.
\end{enumerate}
If both conditions are satisfied, the section $\theta$ gives rise to the \textit{same} section of\linebreak
$\nu F_1|_{\partial F_1}$ and $\nu F_2|_{\partial F_2}$. This can be used to define relative Euler numbers $e_\theta(X,F_i)$. Since, in the closed case, the number $e(X,F)$ does not depend on the choice of the (possibly vanishing) global section of $\nu F$, we obtain the relation $$e(X,F_1\cup_\partial F_2)=e_\theta(X,F_1)+e_\theta(X,F_2).$$
For instance, if $F\subset\CP^2$ is a flexible curve of even degree $m=2k$ with nonempty real part $\RR F$, one sees that $\Lambda=(\nu\RP^2_+\cap\nu F)|_{\RR F}$ is the trivial line bundle over $\RR F$. If $\theta$ denotes a section of the normal bundle $\RR F$ in $\RP^2$, then ${\bf i}\theta$ is a section of $\Lambda$,\linebreak
and letting $\cR_+=\RP^2_+/\conj$ and $A^+(F)=F/\conj$, it also induces a section $\hat{\theta}$~of $(\nu\cR_+\cap\nu A^+(F))|_{\partial A^+(F)}$. By a careful examination, one can use \cref[Lemma]{lem:branched-cover-euler-number} to compute $$e_{\hat{\theta}}(\bS^4,A^+(F))=\frac{1}{2}e_{{\bf i}\theta}(\CP^2,F)=\frac{1}{2}F\cdot F=2k^2,$$
because $F$ is closed,
and $$e_{\hat{\theta}}(\bS^4,\cR_+)=2e_{{\bf i}\theta}(\CP^2,\RP^2_+)=-2\chi(\RP^2_+),$$
because $\RP^2$ is Lagrangian. This means that the Arnold surface $\cA(F)\subset\bS^4$ has normal Euler number $$e(\bS^4,\cA(F))=2k^2-2\chi(\RP^2_+).$$

If $F\subset\CP^2$ is now a flexible curve of \textit{odd} degree, the normal bundle of $\RR F$ in $\RP^2$ is a non-trivial line bundle over $\RR F$ (to be more precise, exactly one connected\linebreak
component of this bundle is the non-orientable line bundle over the circle: the\linebreak
component associated to the pseudo-line $J\subset\RR F$).\hspace{-.05em} As such,\hspace{-.05em} there is no nonvanishing\linebreak
section $\theta$ of $\Lambda$, and it does not give rise to a section ${\bf i}\theta$ of $\nu F|_{\RR F}$. However, the\linebreak
subbundle ${\bf i}\Lambda\subset\nu F|_{\RR F}$ can be seen as a field of lines of $\nu F|_{\RR F}$ (instead of a section being a vector field).

In general, let $\Lambda\subset\nu F|_{\partial F}$ be a line sub-bundle. As done in \cite[\S3]{GM80}, one can still consider the integer obstruction $$\tilde{e}_{\Lambda}(X,F)\in H^2(F,\partial F;\ZZ_w)$$
to extend this field of line to the whole $\nu F$. In the case where $\Lambda$ \textit{does} have a\linebreak
section $\theta$, we have $\tilde{e}_{\Lambda}(X,F)=2e_\theta(X,F)$.

Back to where $F$ is a flexible curve of odd degree, and letting $A^+(F)=F/\conj$, we see that ${\bf i}\Lambda$ induces a line sub-bundle $\hat{\Lambda}$ of $\nu A^+(F)|_{\partial A^+(F)}$. From an application of \cref[Lemma]{lem:branched-cover-euler-number}, $$\tilde{e}_{\hat{\Lambda}}(\bS^4,A^+(F))=\frac{1}{2}\tilde{e}_{{\bf i}\Lambda}(\CP^2,F)=\frac{1}{2}\cdot 2e(\CP^2,F)=m^2,$$
because $F$ is closed. This means that, in the above sense, we have $e(\bS^4,A^+(F))=m^2/2$, although this is a non-integer value.

To ease out the exposition, we will allow ourselves to write half-integer Euler numbers and to use \cref[Lemma]{lem:branched-cover-euler-number} with half-integers. It will be understood that we use the obstruction $\tilde{e}$ when needed. We will also omit the choice of the field of lines in the subscript, as all surfaces will ultimately become closed at the end of computations.

\begin{proposition}
    We have $e(\CPb^2,\cA(F))=m^2-2$.\vspace{-.3em}
\end{proposition}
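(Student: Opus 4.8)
The plan is to compute $e(\CPb^2,\cA(F))$ from the decomposition $\cA(F)=\overline{A}^+(F)\cup_{\RRb F}\Qb_+$, by adding relative normal Euler numbers. First I would check this gluing is admissible in the sense set up above. Since $\tilde p$ is unbranched near $\RRb F$ (as $\cR\cap\cQ=\varnothing$), the local picture of $\overline A^+(F)$, $\Qb$ and $\RRb F$ near $\RRb F$ is the one transported from $T_xF=T_x\RR F\oplus{\bf i}\,T_x\RR F$ with $\RP^2$ Lagrangian; from it one reads off that $\Lambda:=(\nu\overline A^+(F)\cap\nu\Qb_+)|_{\RRb F}$ is a line bundle, in fact $\Lambda=\tilde p^{*}\hat\Lambda$ where $\hat\Lambda\subset\nu A^+(F)|_{\partial A^+(F)}$ is the line field from \cref[Section]{sec:arnold}. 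This $\Lambda$ is trivial: $\hat\Lambda$ is the M\"obius bundle over $J^+$ and is trivial over each $o^+$, and $\tilde p^{*}$ kills its class because $\tilde p\colon\overline J\to J^+$ is the \emph{nontrivial} double cover (on which $H^1(-;\ZZ/2)$ is multiplication by $2$) while $\tilde p\colon\overline o\to o^+$ is trivial. Fixing a nonvanishing section $\theta$ of $\Lambda$, the gluing formula gives
$$e(\CPb^2,\cA(F))=e_\theta(\CPb^2,\overline A^+(F))+e_\theta(\CPb^2,\Qb_+),$$
so it remains to compute the two terms; note that $\nu\Qb_+|_{\RRb F}=\nu\Qb|_{\RRb F}=\nu\Qb_-|_{\RRb F}$, so the same $\Lambda$ and $\theta$ also serve for $\Qb_-$.

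For the first term: since $F$ is $Q$-flexible, $F\tcap Q$ is $2m$ points forming $m$ conjugate pairs, hence $A^+(F)=p(F)$ meets $\cQ$ transversally in exactly $m$ points. Applying \cref[Lemma]{lem:branched-cover-euler-number}(i) with half-integers to $\tilde p$, and recalling $e(\bS^4,A^+(F))=m^2/2$ (i.e.\ $\tilde e_{\hat\Lambda}(\bS^4,A^+(F))=m^2$) from \cref[Section]{sec:arnold}, one gets $e(\CPb^2,\overline A^+(F))=2\cdot\frac{m^2}{2}=m^2$. This is a genuine integer because $\Lambda=\tilde p^{*}\hat\Lambda$ has the section $\theta$, so $\tilde e_\Lambda(\CPb^2,\overline A^+(F))=2\,e_\theta(\CPb^2,\overline A^+(F))$, whence $e_\theta(\CPb^2,\overline A^+(F))=m^2$.

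For the second term I would show $e_\theta(\CPb^2,\Qb_+)=-2$. Let $\cA'(F)=\overline A^+(F)\cup_{\RRb F}\Qb_-$; by the same gluing formula (using the same line bundle $\Lambda$ and section $\theta$) one has $e(\CPb^2,\cA'(F))=e_\theta(\CPb^2,\overline A^+(F))+e_\theta(\CPb^2,\Qb_-)$. The deck transformation $\tau$ of $\tilde p$ is an orientation-preserving self-diffeomorphism of $\CPb^2$; it preserves $\overline A^+(F)=\tilde p^{-1}(A^+(F))$ setwise and exchanges $\Qb_+$ with $\Qb_-$ (the two colours of $\Qb\smallsetminus\RRb F$ are exchanged since a loop generating $\pi_1(\cR)$ meets $p(\RR F)$ an odd number of times, because $[p(\RR F)]=[J^+]$ generates $H_1(\RP^2;\ZZ/2)$). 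Thus $\tau(\cA(F))=\cA'(F)$, so $e(\CPb^2,\cA(F))=e(\CPb^2,\cA'(F))$, which forces $e_\theta(\CPb^2,\Qb_+)=e_\theta(\CPb^2,\Qb_-)$. On the other hand, additivity of the relative Euler class for the single bundle $\nu\Qb$ over $\Qb=\Qb_+\cup_{\RRb F}\Qb_-$ (immediate, as $\Qb_\pm$ carry the common normal bundle $\nu\Qb|_{\RRb F}$) gives $e(\CPb^2,\Qb)=e_\theta(\CPb^2,\Qb_+)+e_\theta(\CPb^2,\Qb_-)$; with the value $e(\CPb^2,\Qb)=-4$ computed above, this yields $e_\theta(\CPb^2,\Qb_+)=-2$. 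Combining the two terms, $e(\CPb^2,\cA(F))=m^2-2$.

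The step I expect to be the main obstacle is the second paragraph: carrying the line field $\hat\Lambda$ through $\tilde p$ while keeping the half-integer conventions of \cref[Section]{sec:arnold} coherent, and verifying that the pulled-back obstruction is genuinely integral — this is what makes the $\Qb_+$-side the only source of the correction term $-2$. The argument of the last paragraph also relies crucially on the observation that $\Qb_+$ and $\Qb_-$ carry literally the same normal bundle restricted to $\RRb F$, so that the $\tau$-symmetry identifies their relative Euler numbers outright rather than merely relating them up to a winding correction.
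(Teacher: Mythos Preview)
Your proof is correct and follows essentially the same approach as the paper: decompose $\cA(F)=\overline{A}^+(F)\cup\Qb_+$, compute $e(\CPb^2,\overline{A}^+(F))=m^2$ via \cref[Lemma]{lem:branched-cover-euler-number}, and obtain $e(\CPb^2,\Qb_+)=-2$ from the symmetry $\Qb_+\leftrightarrow\Qb_-$ under the deck transformation of $\tilde p$ together with $e(\CPb^2,\Qb)=-4$. The paper's version is terser (it invokes the swap directly rather than passing through $\cA'(F)$, and offers \cref[Lemma]{lem:conic-lagrangian} as an alternative for the $\Qb_+$ term), but the substance is identical.

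Two minor remarks. First, your observation that $\Lambda=\tilde p^{*}\hat\Lambda$ is genuinely trivial (because the nontrivial double cover $\overline J\to J^+$ kills the M\"obius class) is a nice refinement: the paper does not isolate this and instead works throughout with the half-integer obstruction $\tilde e$, only remarking that things become integral once surfaces are closed. Your version makes explicit why the relative Euler numbers on the $\CPb^2$ side are honest integers. Second, your appeal to $Q$-flexibility to get $A^+(F)\tcap\cQ$ in $m$ points is not needed for this proposition (the paper states it for arbitrary flexible $F$ of odd degree); one can simply perturb $A^+(F)$ rel boundary to achieve transversality with $\cQ$, since $\cR\cap\cQ=\varnothing$. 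Also, the computation $\tilde e_{\hat\Lambda}(\bS^4,A^+(F))=m^2$ you cite sits in \cref[Section]{sec:proof}, not \cref[Section]{sec:arnold}.
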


\begin{proof}
    Recall that we defined $\overline{A}^+(F)=\tilde{p}^{-1}(A^+(F))$, and $\cA(F)=\overline{A}^+(F)\cup\Qb_+$. Using \cref[Lemma]{lem:branched-cover-euler-number} twice, we compute that $e(\CPb^2,\overline{A}^+(F))=m^2$. Now, we simply make use of the fact that $e(\CPb^2,\Qb_+)=-2$. Indeed, $\Qb=\Qb_+\cup\Qb_-$, thus, $$-4=e(\CPb^2,\Qb)=e(\CPb^2,\Qb_+)+e(\CPb^2,\Qb_-),$$
    and because $\Qb_+$ and $\Qb_-$ are swapped by the (orientation-preserving) involution of $\CPb^2$ spanning $\Aut(\tilde{p})$, we obtain $$e(\CPb^2,\Qb_+)=e(\CPb^2,\Qb_-),$$
    from which we derive $e(\CPb^2,\Qb_\pm)=-2$. Alternatively, this can be obtained from the following lemma.\vspace{-.3em}
\end{proof}

\vspace{-.3em}
\begin{lemma}\label{lem:conic-lagrangian}
    Let $X$ be a submanifold of $\Qb$. Then $e(\CPb^2,X)=-2\chi(X)$.\vspace{-.3em}
\end{lemma}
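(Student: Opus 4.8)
The plan is to deduce the lemma from the single case $X=\Qb$ — which is already known, $e(\CPb^2,\Qb)=-4=-2\chi(\bS^2)$ — by propagating it through the two branched covers and a cut-and-paste induction, so that, as the name suggests, the statement ultimately rests on $\RP^2$ being Lagrangian in $\CP^2$. First I would reduce: both $e(\CPb^2,\cdot)$ and $\chi$ are additive over disjoint unions, so $X$ may be taken connected; and if $\dim X<2$ both sides vanish, so $X$ may be taken to be a compact surface. With $X=\Qb$ settled, what remains is a connected surface $X$ properly embedded in $\Qb\cong\bS^2$, i.e.\ a planar surface with $n\geqslant1$ boundary circles.

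The only genuine computation is the disc case $e(\CPb^2,D)=-2$ for an embedded disc $D\subset\Qb$. By isotopy invariance of the normal Euler number I may take $D$ inside a small ball of $\Qb$, disjoint from its image $\tau(D)$ under the deck involution $\tau$ of $\tilde{p}$. Since $\tilde{p}$ is an ordinary $2$-fold cover near $\Qb$ (disjoint from the branch locus $\cQ$), one has $\tilde{p}^{-1}(\tilde{p}(D))=D\sqcup\tau(D)$, and \cref[Lemma]{lem:branched-cover-euler-number}(i), together with $\tau$ being an orientation-preserving diffeomorphism of $\CPb^2$, gives $2\,e(\CPb^2,D)=2\,e(\bS^4,\tilde{p}(D))$. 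Next, $p\colon\RP^2\to\cR$ is a homeomorphism carrying a disc $D'\subset\RP^2$ onto $\tilde{p}(D)$, and $D'$ lies in the branch locus of $p$, so \cref[Lemma]{lem:branched-cover-euler-number}(ii) gives $e(\bS^4,\tilde{p}(D))=2\,e(\CP^2,D')$. Finally $D'$ is codimension zero in the totally real $\RP^2$, so the normal bundle of $D'$ in $\CP^2$ is ${\bf i}\cdot TD'$, anti-isomorphic to $TD'$; hence $e(\CP^2,D')=-\chi(D')=-1$ by the Poincaré--Hopf theorem for manifolds with boundary. Chaining the equalities, $e(\CPb^2,D)=-2=-2\chi(D)$.

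For a general connected $X\subsetneq\Qb$ I would induct on the number $n$ of boundary circles, the base case $n=0$ being $X=\Qb$. For $n\geqslant1$, a short count forces $\Qb\smallsetminus X^\circ$ to have at least one disc component $Z$: the Euler characteristics of its components sum to $2-\chi(X)\geqslant1$, while every non-disc planar surface has non-positive Euler characteristic. Gluing $Z$ onto $X$ along $c=\partial Z$ gives a connected surface $X'$ with $n-1$ boundary circles and $\chi(X')=\chi(X)+1$; since both glued pieces are codimension zero in $\Qb$ their normal bundles coincide along $c$, so any framing of the normal bundle of $\Qb$ over $c$ is valid common boundary data and the normal Euler numbers add: $e(\CPb^2,X')=e(\CPb^2,X)+e(\CPb^2,Z)=e(\CPb^2,X)-2$. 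By the inductive hypothesis (or the closed case $X'=\Qb$), $e(\CPb^2,X')=-2\chi(X')$, whence $e(\CPb^2,X)=-2\chi(X)$.

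The step demanding the most care is the bookkeeping of relative normal Euler numbers when $\partial X\neq\varnothing$: one must fix, for each circle $c\subset\Qb$ along which a gluing or a cut is performed, a convention for the field of lines in the ambient normal bundle, check that it is intrinsic enough for the additivity used above and for the relative form of \cref[Lemma]{lem:branched-cover-euler-number} to be valid, and verify that it is transported correctly by the local diffeomorphisms induced by $p$ and $\tilde{p}$. The natural choice — a framing of the normal bundle of $\Qb$ restricted to $c$, which does not depend on which side of $c$ one stands on — should make all of this go through, but this is exactly where I expect the technical friction.
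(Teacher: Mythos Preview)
Your argument is correct but considerably longer than the paper's. The paper gives a direct bundle identification: from the Lagrangian anti-isomorphism $\nu\RP^2\cong -T\RP^2$ it tracks how normal and tangent bundles transform through the branched cover $p$ (branched along $\cR$) and then back up through $\tilde{p}$ (which is \emph{unbranched} in a neighbourhood of $\cR$), arriving at a global relation ``$\nu\Qb\cong -2T\Qb$'' over the whole of $\Qb$; restricting both bundles to an arbitrary codimension-zero $X\subset\Qb$ and reading off Euler classes then gives $e(\CPb^2,X)=e(\nu X)=-2e(TX)=-2\chi(X)$ in one stroke, with no induction and no boundary bookkeeping. Your cut-and-paste route rebuilds this from pieces, computing the disc case by chasing exactly the same ingredients (Lagrangianity of $\RP^2$ and the relative form of \cref[Lemma]{lem:branched-cover-euler-number} through both covers) and then gluing. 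What your approach buys is that you never have to make precise a bundle statement like ``$\nu\Qb\cong-2T\Qb$'' or worry about bundles over the non-orientable base $\cR$; what it costs is the induction and the relative-Euler-number framing conventions you rightly flag as the delicate step --- precisely the bookkeeping that the paper's bundle-level argument sidesteps. One small slip: your claim that ``if $\dim X<2$ both sides vanish'' is false for $\chi$ (a point has $\chi=1$), but this is harmless since the lemma is only ever applied to codimension-zero regions of $\Qb$.
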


\begin{proof}
    The submanifold $\RP^2\subset\CP^2$ being Lagrangian, and the covering $p:\CP^2\to\bS^4$ being branched exactly on $p(\RP^2)$, we observe that $\nu\cR\cong-T\cR$ in $\bS^4$. However, the covering $\tilde{p}:\CPb^2\to\bS^4$ is \textit{unbranched} in a regular neighborhood of $\cR$, whence $\nu\Qb\cong-2T\Qb$. In particular, for the Euler classes, we have $e(\CPb^2,X)=e(\nu X)=-2e(TX)=-2\chi(X)$.\vspace{-.2em}
\end{proof}

Because $\cA(F)$ has an odd self-intersection, we see that it cannot be null-\linebreak
homologous in $H_2(\CPb^2;\ZZ/2)$. In fact, because this group has rank one, being $\ZZ/2$-null-homologous is equivalent to having an even self-intersection. There is\linebreak
another surface, however, which is not null-homologous and transverse to $\cA(F)$:~the\linebreak
surface $\RPb^2$. If $F$ is a $Q$-flexible curve of odd degree $m$, the transverse intersection\linebreak
$F\tcap Q$ is $2m$ points. This implies that $\cA(F)$ intersects $\RPb^2$ transversely in $m$ points. The surface $\cA(F)\cup\RPb^2$ is therefore immersed with $m$ transverse crossings only. We will describe how to resolve those double points to obtain an embedded surface.

Firstly, in a closed oriented $4$-manifold $X$, let $\Sigma\subset X$ be the image of a closed\linebreak
surface through an immersion, with only one transverse self-intersection point\linebreak
$x\in\Sigma$. Take $B\subset X$ to be a small $4$-ball around $x$, which meets $\Sigma$ in two disks intersecting transversely at their common center $x$. The boundary of those discs is a Hopf link $\partial B\cap\Sigma\subset\partial B\cong\bS^3$, which bounds a Hopf band $H\subset B$. We call the\linebreak
surface $\Sigma'$ defined by a choice of a gluing of a Hopf band $H$ to $\Sigma\smallsetminus B$ a \textit{smoothing of the singularity} of the immersed surface $\Sigma\subset X$.

\vspace{-.3em}
\begin{lemma}\label{lem:res-sing}
    The resulting surface $\Sigma'$ is an embedded surface in $X$ with $\chi(\Sigma')=\chi(\Sigma)-1$, $e(X,\Sigma')=e(X,\Sigma)\pm2$, and we have freedom in the choice.\vspace{-.3em}
\end{lemma}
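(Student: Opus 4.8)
The plan is to establish the three assertions separately; only the statement about the normal Euler number requires genuine work. For embeddedness: since $x$ is the only singular point of $\Sigma$ and it lies in $\mathrm{int}(B)$, the piece $\Sigma\smallsetminus B$ is a genuinely embedded surface-with-boundary in $X\smallsetminus\mathrm{int}(B)$, with boundary the Hopf link $\partial B\cap\Sigma$, while the Hopf band $H$ is by construction a smoothly embedded annulus in $B$ with that same boundary; after arranging $H$ and $\Sigma\cap B$ to agree with the product $(\partial B\cap\Sigma)\times[0,1)$ inside a collar of $\partial B$, the union $\Sigma'=(\Sigma\smallsetminus B)\cup H$ is a smooth embedded surface, the two pieces meeting only along $\partial B\cap\Sigma$. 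For the Euler characteristic I would apply $\chi(A\cup C)=\chi(A)+\chi(C)-\chi(A\cap C)$ to the decompositions $\Sigma=(\Sigma\smallsetminus B)\cup(\Sigma\cap B)$ and $\Sigma'=(\Sigma\smallsetminus B)\cup H$, both glued along $\partial B\cap\Sigma\cong\bS^1\sqcup\bS^1$ (so $\chi=0$); since $\Sigma\cap B$ is a pair of disks meeting at one point, $\chi(\Sigma\cap B)=1$, whereas $\chi(H)=0$, and subtracting the two identities gives $\chi(\Sigma')=\chi(\Sigma)-1$.

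For the normal Euler number — understood here as the Euler number of the normal bundle of the immersion, i.e.\ the sum of the normal Euler numbers of the two sheets — the key observation is that $\Sigma$ and $\Sigma'$ coincide outside $B$. Fixing a nonvanishing section $\theta$ of the normal bundle along $\partial B\cap\Sigma$ and splitting both $e(X,\Sigma)$ and $e(X,\Sigma')$ as the relative Euler number of the common outside part $\Sigma\smallsetminus B$ (with the framing $\theta$) plus an inside contribution, the difference $e(X,\Sigma')-e(X,\Sigma)$ becomes the difference of the inside contributions of $H$ and of the flat pair $D_1\cup D_2$. This difference depends only on the germ of $\Sigma$ at the transverse double point — which is unique up to diffeomorphism — and on which of the two Hopf bands is inserted; re-choosing $\theta$ shifts both inside contributions by the same amount, so the difference is a universal constant, one value for each of the two choices of band.

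It then suffices to compute this constant in one model, and I would take $X=\CP^2$ with $\Sigma=L_1\cup L_2$ a union of two complex lines meeting transversely at a point, so that $e(\CP^2,\Sigma)=e(\CP^2,L_1)+e(\CP^2,L_2)=2$. One of the two smoothings of the double point is a smooth conic; being embedded and of homology class $2[\CP^1]$, it has $e(\CP^2,\mathrm{conic})=4$, so this choice contributes $+2$. For the other smoothing, an orientation-reversing diffeomorphism of the local model $\CC^2$ — for instance $(z,w)\mapsto(\bar z,w)$ — preserves $D_1\cup D_2$ and interchanges the two Hopf bands, hence sends one constant to the negative of the other, so the remaining choice contributes $-2$. (Equivalently, reversing the orientation of $L_2$ identifies this second smoothing with the standard one of $L_1\cup\bar L_2$, an embedded $\bS^2$ representing $[L_1]-[L_2]=0$, whose normal Euler number vanishes.) Hence $e(X,\Sigma')=e(X,\Sigma)\pm2$, the sign being precisely the freedom in the choice of Hopf band.

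The step I expect to be the main obstacle is the bookkeeping in this last computation: stating and applying the additivity of the relative Euler class across $\partial B\cap\Sigma$ carefully in the twisted-coefficient setting (the surfaces arising in the application, notably $\cA(F)\cup\RPb^2$, are non-orientable), making sense of the ``inside contribution'' of the non-manifold piece $D_1\cup D_2$ so that it enters the additivity formula as expected, and checking that the two model computations genuinely realize the two distinct Hopf bands rather than the same one twice. By contrast, the embeddedness and Euler-characteristic assertions are routine.
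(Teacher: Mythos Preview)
Your argument is correct. The Euler-characteristic computation is essentially identical to the paper's (inclusion--exclusion applied to the two decompositions, using that $\Sigma\cap B$ is a wedge of two discs). For the normal Euler number, however, the paper takes a more direct route than your model-computation approach: rather than invoking additivity of the relative Euler class and then evaluating the universal constant on $L_1\cup L_2\subset\CP^2$, the paper simply chooses a transverse push-off $s(\Sigma)$ of $\Sigma$ that is parallel to $\Sigma$ near the double point, and counts the intersections $s(\Sigma)\cap H$ directly. These are two points of the same sign, and the sign flips when one passes to the other orientation of the Hopf link (hence the other band). This avoids entirely the bookkeeping you flag as the main obstacle --- there is no need to make sense of the inside contribution of the singular piece $D_1\cup D_2$, nor to worry about twisted coefficients or whether the two model computations realise distinct bands. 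Your approach has the virtue of being more conceptual and of making the dependence on the local model explicit, and the $L_1\cup L_2$ computation is pleasant; but the paper's push-off argument is shorter and sidesteps the subtleties you anticipate.
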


\begin{proof}
    Regarding the claim about the normal Euler numbers, we use similar\linebreak
    arguments as in \cite[\S5]{Yam95}. Note that if $B$ is a small $4$-ball around the double point $x\in\Sigma$, then the Hopf link $\partial B\cap\Sigma$ comes with two possible choices of orientations. Each determines a unique (up to isotopy fixing the boundary) oriented Hopf band $H$ inducing that orientation. A transverse push-off $s(\Sigma)$ of $\Sigma$ can be\linebreak
    assumed to be parallel to $\Sigma$ near $x$, and the intersection $s(\Sigma)\cap\Sigma\cap H$ is two points with the same sign. Finally, we see that those signs are opposite to one another in both choices of orientations of $\partial B\cap\Sigma$ (see \cref[Figure]{fig:sing-res}).
    
    The fact that $\chi(\Sigma')=\chi(\Sigma)-1$ is simply a matter of using the formula $\chi(A\cup B)=\chi(A)+\chi(B)-\chi(A\cap B)$ twice (here, all the sets involved are cellular subspaces). Indeed, if $H$ denotes the Hopf band that is glued to $\Sigma\smallsetminus B$, then $$\chi(\Sigma')=\chi(\Sigma\smallsetminus B\cup_\partial H)=\chi(\Sigma\smallsetminus B)+\chi(H)-\chi(\bS^1\sqcup\bS^1)=\chi(\Sigma\smallsetminus B),$$
    and $$\chi(\Sigma)=\chi(\Sigma\smallsetminus B\cup_\partial B\cap\Sigma)=\chi(\Sigma\smallsetminus B)+\chi(B\cap\Sigma)-\chi(\bS^1\sqcup\bS^1)=\chi(\Sigma\smallsetminus B)+1,$$
    by noting that $B\cap\Sigma$ is topologically a wedge of two discs.
\end{proof}

\begin{figure}[t]
    \centering
    \begin{tikzpicture}
        \node[inner sep=0pt] at (0,0) {\includegraphics{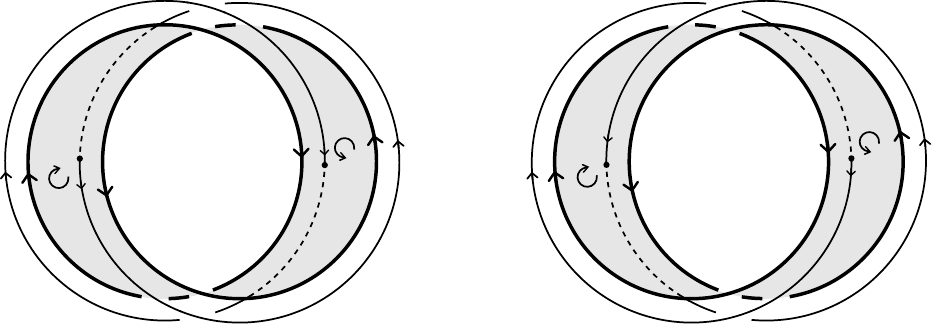}};
        \node at (-2.7,-2.2) {$e(\Sigma')=e(\Sigma)\pm2$};
        \node at (2.7,-2.2) {$e(\Sigma')=e(\Sigma)\mp2$};
    \end{tikzpicture}
    \caption{The two possible smoothings of a singularity of an\linebreak
    immersion, given by both choices of orientation of the Hopf link.}
    \label{fig:sing-res}
\end{figure}

Consider $F\subset\CP^2$ a $Q$-flexible curve of odd degree $m$. The Arnold surface $\cA(F)$ needs not be orientable, and as said before, there is no 2-fold branched cover of $(\CPb^2,\cA(F))$. Recall that $\cA(F)\tcap\RPb^2$ is $m$ points, and as such, $\cA(F)\cup\RPb^2$ is an immersed surface with $m$ double points. Applying the previous smoothing of\linebreak
the singularities at each of those $m$ points, this yields a surface $\cX(F)\subset\CPb^2$, with $$\chi(\cX(F))=\chi(\cA(F)\cup\RPb^2)-m;\qquad\qquad\qquad\qquad$$
$$e(\CPb^2,\cX(F))=e(\CPb^2,\cA(F)\cup\RPb^2)+2r,\quad r\in\{-m,\dots,m\}.$$
Here, $r$ is not free to take \textit{all} the possible values in $\{-m,\dots,m\}$. However, the extremal values $\pm m$ are always realizable. Define $\cX(F)$ to be the one where we pick up a $+2$ every time (that is, $r=+m$). Two applications of the topological Riemann--Hurwitz formula give $\chi(\cA(F))=\chi(F)-m+1$. Therefore, we have $$\chi(\cX(F))=-m^2+2.$$
Next, we compute $$e(\CPb^2,\cX(F))=e(\CPb^2,\cA(F)\cup\RPb^2)+2m=m^2+2m-1.$$

Take $Y^4$ to be the 2-fold cover of $\CPb^2$ branched over $\cX(F)$. This has been made possible because the surface $\cX(F)$ has zero homology mod 2: $[\cX(F)]=\linebreak0\in H_2(\CPb^2;\ZZ/2)$ (see \cite[\S6.3]{GS99} or \cite[Corollary 2.10]{Nag00}). Indeed, $$H_2(\CPb^2;\ZZ/2)=\{0,[\RPb^2]\},$$
and $\cA(F)$ intersects $\RPb^2$ in an odd number $m$ of points. Therefore, we deduce\linebreak
that $[\cA(F)]=[\RPb^2]$ in $H_2(\CPb^2;\ZZ/2)$. Now, adding $\RPb^2$ and smoothing the\linebreak
singularities means that $\cX(F)\cap\RPb^2=\varnothing$, and as such $[\cX(F)]=0$ in $H_2(\CPb^2;\ZZ/2)$. We denote as $\Theta:Y^4\to(\CPb^2,\cX(F))$ the 2-fold branched cover. The previous\linebreak
computations of $\chi(\cX(F))$ and $e(\CPb^2,\cX(F))$ will allow us to obtain homological information about the 4-manifold $Y$.

\begin{proposition}\label{prop:torsionHomology}
    The homology groups $H_1(Y;\ZZ)$ and $H_3(Y;\ZZ)$ are torsion. In particular, for Betti numbers, we have $b_1(Y)=b_3(Y)=0$.
\end{proposition}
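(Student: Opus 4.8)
The statement is equivalent to $b_1(Y)=b_3(Y)=0$. First, $Y$ is a closed \emph{oriented} $4$-manifold: away from the branch locus $\cX(F)$, which is a codimension-two submanifold, it is the connected unbranched double cover of the oriented manifold $\CPb^2\smallsetminus\cX(F)$, and the orientation extends across $\cX(F)$; Poincaré duality then gives $b_1(Y)=b_3(Y)$, so it suffices to prove $H_1(Y;\mathbf{Q})=0$. Let $\tau\colon Y\to Y$ be the covering involution of $\Theta$, so that $Y/\tau=\CPb^2$ and $\mathrm{Fix}(\tau)=\cX(F)$. Over $\mathbf{Q}$ we have the eigenspace splitting $H_*(Y;\mathbf{Q})=H_*(Y;\mathbf{Q})^{+}\oplus H_*(Y;\mathbf{Q})^{-}$ for $\tau_*$, and the transfer identifies the invariant part with $H_*(Y/\tau;\mathbf{Q})=H_*(\CPb^2;\mathbf{Q})$; in particular $H_1(Y;\mathbf{Q})^{+}=0$, and the claim reduces to the vanishing of the anti-invariant part $H_1(Y;\mathbf{Q})^{-}$.

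To analyse it, write $Y=N\cup\tilde W$ with $N$ a closed tubular neighborhood of $\cX(F)\subset Y$ and $\tilde W$ the compact complement — the unbranched connected double cover of the complement $W$ of an open tubular neighborhood of $\cX(F)$ in $\CPb^2$. The involution $\tau$ acts on $N$ fibrewise by $z\mapsto-z$, orientation-preservingly on the normal disks, hence trivially on $H_*(N;\mathbf{Q})\cong H_*(\cX(F);\mathbf{Q})$; and it acts freely on $M:=\partial N=\partial\tilde W$ with quotient the normal circle bundle $M_0$ of $\cX(F)$ in $\CPb^2$, so $H_*(M;\mathbf{Q})^{-}\cong H_*(M_0;\mathbf{Q}_-)$, which vanishes because the local system is nontrivial on the $S^1$-fibres of $M_0$ (Leray--Serre). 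Passing to $(-1)$-eigenspaces in the $\tau$-equivariant Mayer--Vietoris sequence of $Y=N\cup\tilde W$ — exact, since $\mathbf{Q}[\ZZ/2]$ is semisimple — then collapses it to isomorphisms $H_k(Y;\mathbf{Q})^{-}\cong H_k(\tilde W;\mathbf{Q})^{-}\cong H_k(W;\mathbf{Q}_-)$, where $\mathbf{Q}_-$ denotes the nontrivial rank-one $\mathbf{Q}$-local system on $W$ classifying the cover $\tilde W\to W$. Thus the proposition is equivalent to $H_1\big(\CPb^2\smallsetminus\cX(F);\mathbf{Q}_-\big)=0$.

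This twisted vanishing is the crux, and it forces one to use the explicit construction of $\cX(F)$; the formal constraints are insufficient, since Poincaré--Lefschetz duality for $W$ combined with $H_*(\partial W;\mathbf{Q}_-)=0$ only yields $b_1(W;\mathbf{Q}_-)=b_3(W;\mathbf{Q}_-)$, and the identity $\chi(W;\mathbf{Q}_-)=\chi(W)=\chi(\CPb^2)-\chi(\cX(F))=m^2+1$ then merely gives $b_2(W;\mathbf{Q}_-)=m^2+1+2\,b_1(W;\mathbf{Q}_-)$. To finish, I would exploit that $\cX(F)=\overline{A}^+(F)\cup\Qb_+\cup\RPb^2$, with its $m$ crossings smoothed, is assembled from standard pieces: $\CPb^2\smallsetminus\RPb^2$ is simply connected — it is the unbranched double cover of $\bS^4\smallsetminus N(\cQ)$, whose fundamental group is $\ZZ/2$ — and $\Qb_+$ is a planar piece of the standard $2$-sphere $\Qb\subset\CPb^2$, so the only genuinely flexible-curve-dependent piece is $\overline{A}^+(F)=\tilde p^{-1}(F/\conj)$. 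Peeling $\RPb^2$ and $\Qb_+$ off $\cX(F)$ by Mayer--Vietoris and invoking the simple connectivity of $\CPb^2\smallsetminus\RPb^2$, one reduces $H_1(W;\mathbf{Q}_-)=0$ to a statement about how $\tilde p^{-1}(F/\conj)$ lies in that simply connected manifold-with-boundary — a statement that follows from $Q$-flexibility (which makes $\overline{A}^+(F)\tcap\RPb^2$ consist of exactly $m$ points, matching the smoothing) and is transparent for algebraic or pseudo-holomorphic curves. I expect this last reduction — controlling the twisted first homology of the complement of the ``Arnold part'' of $\cX(F)$ — to be the main obstacle, the earlier steps being formal once it is in place.
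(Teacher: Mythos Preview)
Your proposal has a genuine gap: it is not a proof but a reduction to an unproven statement. You correctly set up the rational eigenspace decomposition, correctly identify $H_1(Y;\mathbf{Q})^-$ with $H_1(W;\mathbf{Q}_-)$, and correctly observe that the formal constraints (Poincar\'e--Lefschetz duality plus the Euler-characteristic count) do not pin down $b_1(W;\mathbf{Q}_-)$. But then you stop, calling the vanishing of $H_1(W;\mathbf{Q}_-)$ ``the crux'' and ``the main obstacle'', and only speculate that peeling off $\RPb^2$ and $\Qb_+$ and invoking $Q$-flexibility should eventually do the job. No argument is given.

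The paper's proof is much shorter and uses far less. Working with $\ZZ/2$ coefficients rather than $\mathbf{Q}$, it quotes a Gysin-type exact sequence for double branched covers (Lee--Weintraub),
\[
H_1(\CPb^2,\cX(F);\ZZ/2)\ \longrightarrow\ \tilde H_1(Y;\ZZ/2)\ \longrightarrow\ H_1(\CPb^2,\cX(F);\ZZ/2),
\]
and then reads off $H_1(\CPb^2,\cX(F);\ZZ/2)=0$ from the long exact sequence of the pair: $H_1(\CPb^2;\ZZ/2)=0$, and $H_0(\cX(F);\ZZ/2)\to H_0(\CPb^2;\ZZ/2)$ is an isomorphism because $\cX(F)$ is \emph{connected}. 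This gives $H_1(Y;\ZZ/2)=0$, hence $b_1(Y)=0$, and Poincar\'e duality finishes. The only input about $\cX(F)$ is its connectedness; neither $Q$-flexibility nor the decomposition $\cX(F)=\overline{A}^+(F)\cup\Qb_+\cup\RPb^2$ plays any role. Your belief that ``it forces one to use the explicit construction of $\cX(F)$'' is therefore mistaken.

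The moral is that by passing to $\mathbf{Q}$ you traded the Smith/Gysin machinery---which for $2$-fold branched covers lives naturally over $\ZZ/2$---for a twisted-coefficient computation that is genuinely harder to finish. Your framework is not wrong, but it is the long way around; had you looked for an exact sequence relating $H_*(Y)$ directly to $H_*(\CPb^2,\cX(F))$ rather than to $H_*(W;\mathbf{Q}_-)$, you would have arrived at the two-line argument above.
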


\begin{proof}
    In order to show that $H_1(Y;\ZZ)$ is torsion, it is sufficient to know that\linebreak
    $H_1(Y;\ZZ/2)=0$, for any free part $\ZZ^p<H_1(Y;\ZZ)$ would give $p$ copies of $\ZZ/2$ in $H_1(Y;\ZZ/2)$. We use a generalisation of the Gysin sequence, as stated in \cite[Theorem 1]{LW95}: $$H_1(\CPb^2,\cX(F);\ZZ/2)\to H_1(Y,\ast;\ZZ/2)\to H_1(\CPb^2,\cX(F);\ZZ/2).$$
    Here, $H_1(Y,\ast;\ZZ/2)\cong\tilde{H}_1(Y;\ZZ/2)$ the reduced homology group, and we have\linebreak
    $H_1(\CPb^2,\cX(F);\ZZ/2)=0$, by looking at the homology long exact sequence of the pair $(\CPb^2,\cX(F))$. This provides $H_1(Y;\ZZ/2)=0$, as claimed. For $b_3(Y)=0$, this is a consequence of $b_1(Y)=0$ and Poincaré duality.
\end{proof}

An educated guess is that $Y$ \textit{may} be simply-connected, just like the usual branched cover of $\CP^2$ branched over an \textit{algebraic} curve $\{P(x_0,x_1,x_2)=0\}$, given as the\linebreak
algebraic surface $\{P(x_0,x_1,x_2)=w^2\}\subset\CP(1,1,1,m/2)$, is simply connected\linebreak
(see \cite{Wil78}). However, we have enough information to compute all the homological invariants of $Y$ that will be useful. We recall the Hirzebruch formula for the signature of 2-fold branched covers.

\begin{theorem}[{\cite[Section 3]{Hir69}}]\label{thm:H}
    Let $f:(Y,B)\to(X,A)$ be a cyclic 2-fold branched cover, with $X$ and $Y$ both closed oriented 4-manifolds, $A$ a closed surface and $f$ orientation-preserving. Then, we have $$\sigma(Y)=2\sigma(X)-\frac{1}{2}e(X,A).$$
\end{theorem}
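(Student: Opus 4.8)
The plan is to deduce this from the Atiyah--Singer $G$-signature theorem applied to the deck involution. Write $\tau\colon Y\to Y$ for the generator of $\Aut(f)$; it is an orientation-preserving involution with quotient $X$, and since near the ramification locus $f$ is modelled on $(a,z)\mapsto(a,z^2)$ (with $a$ a local coordinate on $B$ and $z$ a normal one), its fixed-point set is exactly $B$, with no isolated fixed points, and $e(Y,B)=\tfrac12 e(X,A)$ by \cref[Lemma]{lem:branched-cover-euler-number}\,(ii). First I would split the intersection form: as $\tau$ preserves the orientation it acts by isometries of $(H_2(Y;\RR),Q_Y)$, so the $(\pm1)$-eigenspaces $V_\pm$ of $\tau_*$ are $Q_Y$-orthogonal and $\sigma(Y)=\sigma(Q_Y|_{V_+})+\sigma(Q_Y|_{V_-})$. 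For the invariant part, the transfer furnishes $f^*\colon H^2(X;\RR)\to H^2(Y;\RR)$ whose image is the $\tau^*$-fixed subspace (from $f_*f^*=2$ and $f^*f_*=\operatorname{id}+\tau^*$ one sees $f^*$ is an isomorphism onto $H^2(Y;\RR)^{\tau^*}$), and the projection formula gives $\langle f^*a\smile f^*b,[Y]\rangle=\langle a\smile b,f_*[Y]\rangle=2\langle a\smile b,[X]\rangle$; since rescaling a nondegenerate form by a positive constant leaves its signature unchanged, $\sigma(Q_Y|_{V_+})=\sigma(X)$.

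It then remains to identify $\sigma(Q_Y|_{V_-})$. By definition, the equivariant signature is $\operatorname{Sign}(\tau,Y)=\sigma(Q_Y|_{V_+})-\sigma(Q_Y|_{V_-})$, so it suffices to compute $\operatorname{Sign}(\tau,Y)$, and this is exactly what the $G$-signature theorem does for the $\ZZ/2$-action generated by $\tau$: with the fixed set the closed surface $B$ and no isolated fixed points, the only contribution is the self-intersection of the fixed surface, so $\operatorname{Sign}(\tau,Y)=B\cdot_Y B=e(Y,B)$ (read via twisted coefficients when $B$ is non-orientable, as in the discussion preceding the statement). Hence $\sigma(Q_Y|_{V_-})=\sigma(X)-e(Y,B)$, and combining, $$\sigma(Y)=\sigma(Q_Y|_{V_+})+\sigma(Q_Y|_{V_-})=2\sigma(X)-e(Y,B)=2\sigma(X)-\tfrac12 e(X,A).$$

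The real content is the $G$-signature evaluation in the second step, and the two delicate points are the sign convention — one must land on $\operatorname{Sign}(\tau,Y)=+e(Y,B)$ and not its negative, which is easiest to pin down on the model case $X=\CP^2$ with $A$ a smooth conic and $Y\cong\bS^2\times\bS^2$, where $\sigma(Y)=0=2\cdot1-\tfrac12\cdot4$ — and the meaning of $B\cdot_Y B$ for non-orientable $B$, which forces "self-intersection" to be read throughout as the relative/twisted normal Euler number. An index-theorem-free alternative, closer to Hirzebruch's original, would instead write $Y=\widetilde W\cup_\partial \nu(B)$ with $W=X\smallsetminus\nu(A)$ and $\widetilde W\to W$ the associated unramified double cover, and play Novikov additivity for the signature against the signature defect picked up along the circle bundle $\partial W$ (equivalently a $\rho$-invariant of that bundle); this trades the appeal to Atiyah--Singer for an explicit boundary computation but carries out the same bookkeeping.
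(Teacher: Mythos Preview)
The paper does not prove this statement at all: it is quoted as a black box from Hirzebruch's article and then applied. Your argument via the $G$-signature theorem for the deck involution is correct and is, in essence, Hirzebruch's own derivation; the identification $\sigma(Q_Y|_{V_+})=\sigma(X)$ through the transfer, the Atiyah--Singer evaluation $\operatorname{Sign}(\tau,Y)=e(Y,B)$, and the use of \cref[Lemma]{lem:branched-cover-euler-number}\,(ii) to pass from $e(Y,B)$ to $\tfrac12 e(X,A)$ are all sound, and your conic-in-$\CP^2$ sanity check pins down the sign correctly. So there is nothing to compare against here beyond noting that you have supplied the proof the paper chose to outsource.
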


\begin{proposition}
    We have
    $$\chi(Y)=m^2+4,\quad b_2(Y)=m^2+2,\quad \sigma(Y)=\frac{-m^2-2m-3}{2},$$
    \vspace{-.7em}
    $$b_2^+(Y)=\frac{(m-1)^2}{4}\quad\text{and}\quad b_2^-(Y)=\frac{3m^2+2m+7}{4},$$
    where $b_2^+(Y)$ and $b_2^-(Y)$ respectively denote the maximal ranks of the subspaces of $H_2(Y;\ZZ)$ on which the intersection form $Q_Y$ is positive and negative definite.
\end{proposition}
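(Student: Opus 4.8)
The plan is to read off all five invariants from data already assembled, in the order $\chi(Y)$, $b_2(Y)$, $\sigma(Y)$, then $b_2^\pm(Y)$. First I would compute the Euler characteristic: since $\Theta\colon Y\to\CPb^2$ is a $2$-fold branched cover with branch surface $\cX(F)$ (and branch locus upstairs mapping homeomorphically onto $\cX(F)$), the branched-cover Euler characteristic formula gives $\chi(Y)=2\chi(\CPb^2)-\chi(\cX(F))$. Now $\CPb^2$ has the Betti numbers of $\CP^2$, so $\chi(\CPb^2)=3$, and we recorded $\chi(\cX(F))=-m^2+2$ above; hence $\chi(Y)=6-(-m^2+2)=m^2+4$. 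For $b_2(Y)$, note that $Y$ is closed, connected (being a branched cover of the connected manifold $\CPb^2$ over the non-empty locus $\cX(F)$) and oriented, so $b_0(Y)=b_4(Y)=1$; combined with $b_1(Y)=b_3(Y)=0$ from \cref[Proposition]{prop:torsionHomology}, the identity $\chi(Y)=2+b_2(Y)$ yields $b_2(Y)=m^2+2$.

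Next I would compute the signature by applying Hirzebruch's formula, \cref[Theorem]{thm:H}, to the cyclic $2$-fold branched cover $\Theta\colon(Y,\cX(F))\to(\CPb^2,\cX(F))$. This is legitimate: $\CPb^2$ and $Y$ are closed oriented $4$-manifolds, $\cX(F)$ is a closed surface, $\Theta$ is orientation-preserving, and the cover exists precisely because $[\cX(F)]=0$ in $H_2(\CPb^2;\ZZ/2)$. Plugging in $\sigma(\CPb^2)=-1$ and the value $e(\CPb^2,\cX(F))=m^2+2m-1$ obtained above, the formula $\sigma(Y)=2\sigma(\CPb^2)-\frac12 e(\CPb^2,\cX(F))$ gives $\sigma(Y)=-2-\frac{m^2+2m-1}{2}=\frac{-m^2-2m-3}{2}$.

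Finally, since $Y$ is a closed oriented $4$-manifold with $b_1(Y)=0$, its intersection form on $H_2(Y;\ZZ)/\mathrm{torsion}$ is non-degenerate, so $b_2^+(Y)+b_2^-(Y)=b_2(Y)$ and $b_2^+(Y)-b_2^-(Y)=\sigma(Y)$. Solving this $2\times2$ system gives $b_2^+(Y)=\frac12\bigl(b_2(Y)+\sigma(Y)\bigr)=\frac{(m-1)^2}{4}$ and $b_2^-(Y)=\frac12\bigl(b_2(Y)-\sigma(Y)\bigr)=\frac{3m^2+2m+7}{4}$; one checks $b_2^+(Y)+b_2^-(Y)=m^2+2$ as a consistency test, and observes that the value $b_2^+(Y)=(m-1)^2/4$ is exactly the quantity that will bound $\ell^0+\ell^-$ in \cref[Theorem]{thm:main}, which is the whole point of the computation. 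The only step requiring genuine care is the appeal to \cref[Theorem]{thm:H}: one must confirm that its hypotheses survive the possible non-orientability of $\cX(F)$ — i.e. that the relevant normal Euler number is the twisted one $\tilde e$ from the construction, and that the half-integer bookkeeping used to build $\cA(F)$ and $\cX(F)$ has genuinely resolved to the honest integer $m^2+2m-1$ now that $\cX(F)$ is closed. Everything else is arithmetic.
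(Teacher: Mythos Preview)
Your proof is correct and follows essentially the same route as the paper: Riemann--Hurwitz for $\chi(Y)$, \cref[Proposition]{prop:torsionHomology} for $b_2(Y)$, Hirzebruch's formula (\cref[Theorem]{thm:H}) for $\sigma(Y)$, and then the linear system $b_2^+\pm b_2^-$ for the remaining two invariants. The only difference is cosmetic (you compute $b_2$ before $\sigma$, the paper after), and your closing caveat about the hypotheses of \cref[Theorem]{thm:H} surviving non-orientability of $\cX(F)$ is a worthwhile remark that the paper leaves implicit.
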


\begin{proof}
    First, the topological Riemann--Hurwitz formula again yields $$\chi(Y)=2\chi(\CPb^2)-\chi(\cX(F))=m^2+4.$$
    Next, we use the Theorem \ref{thm:H} with the branched cover $\Theta$ to obtain $$\sigma(Y)=2\sigma(\CPb^2)-\frac{1}{2}e(\CPb^2,\cX(F))=-2-\frac{m^2+2m-1}{2}=\frac{-m^2-2m-3}{2}.$$
    Now, because of \cref[Proposition]{prop:torsionHomology}, we see that $\chi(Y)=2+b_2(Y)$. This provides $$b_2^+(Y)+b_2^-(Y)=b_2(Y)\quad\text{and}\quad b_2^+(Y)-b_2^-(Y)=\sigma(Y),$$
    which we can easily solve for $b_2^\pm(Y)$.
\end{proof}
    \subsection*{Proving the inequality}

We will now mostly mimic the proof of Viro and Zvonilov \citeyear{VZ92}. Note that the construction of $\cX(F)$ from $\cA(F)\cup\RPb^2$ happens away from a neighborhood $\Qb$. In particular, we still see $\RRb F$ embedded inside $\cX(F)$. Given~an oval $o\subset\RR F$, recall that $\RP^2\smallsetminus o$ has two connected components, one of which is~a punctured disc (the other being a punctured Möbius band). Letting $C(o)\subset\cR$ be the image of that component under $p:\CP^2\to\bS^4$, we see that $\tilde{p}^{-1}(C(o))\subset\Qb$ is diffeomorphic to two disjoint copies of $C(o)$. We denote as $C_\pm(o)$ each of these copies, with the property that $C_\pm(o)\subset\Qb_\pm$ (see \cref[Figure]{fig:cPMoval}).

\begin{figure}[b]
    \centering
    \begin{tikzpicture}
        \node[inner sep=0pt] at (0,0) {\includegraphics{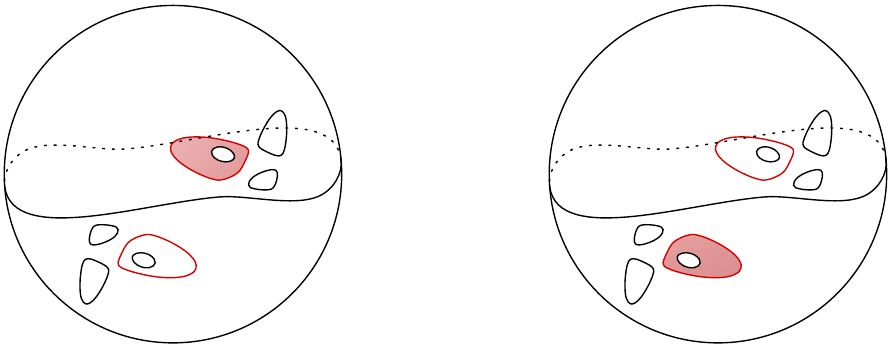}};
        \node at (-2.7,-2.3) {$C_-(o)\subset\Qb_-$};
        \node at (2.9,-2.3) {$C_+(o)\subset\Qb_+$};
    \end{tikzpicture}
    \caption{Using the same scheme $\langle J\sqcup 2\sqcup1\langle1\rangle\rangle$ as in the example of \cref[Figure]{fig:arnold-surface}, we take $o$ to be the only non-empty oval. In the shaded regions, we depict $C_\pm(o)$, where part of the boundary $\partial C_\pm(o)$ is $\overline{o}$.}
    \label{fig:cPMoval}
\end{figure}

We see that $C_+(o)$ is totally included in the ramification locus of the branched cover $\Theta:Y\to\CPb^2$, and that $C_-(o)$ intersects this ramification locus only at its boundary $\partial C_-(o)\supset\overline{o}$. We let $\tilde{C}(o)=\Theta^{-1}(C_-(o))$. The restriction $\Theta:\tilde{C}(o)\to C_-(o)$ is not a branched cover, but it is close enough: it maps the boundaries $\partial\tilde{C}(o)\to\partial C_-(o)$ diffeomorphically, and is two-to-one on the interior. Because $C_-(o)$ is planar (that is, a sphere with holes), we have that $\tilde{C}(o)$ is obtained as\linebreak
gluing two spheres with holes along their boundary components. Additionally, $\Aut(\Theta)$ is a $\ZZ/2$ spanned by $\tau:Y\to Y$ an orientation-preserving involution. This involution $\tau$ swaps those two planar surfaces in $Y$ that glue to $\tilde{C}(o)$ and fixes their common boundary. As such, we have shown the next result.

\begin{proposition}
    For any oval $o\subset\RR F$, $\tilde{C}(o)$ is an oriented surface in $Y$ of genus\linebreak
    $b$ the number of ovals directly contained in $o$. The restriction $\Theta:\tilde{C}(o)\to C_-(o)$, shown in \cref[Figure]{fig:pseudo-BC}, is the result of the quotient of the surface $\Sigma_b$ by reflection along a plane of symmetry that cuts it into two planar surfaces.\qed
\end{proposition}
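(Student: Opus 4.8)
The plan is to determine $\tilde{C}(o)=\Theta^{-1}(C_-(o))$ from the behaviour of $\Theta$ near its ramification surface along $\partial C_-(o)$, after fixing the topology of $C_-(o)$ and its position relative to that surface. First I would recall that $C(o)\subset\cR$ is a sphere with $b+1$ holes: it is the disc bounded by $o$ with the $b$ open discs bounded by the ovals $o_1,\dots,o_b$ lying directly inside $o$ removed, so $\partial C(o)=o\cup o_1\cup\dots\cup o_b$. Since $C(o)$ lies inside a disc of $\cR$, the restriction over it of $\tilde{p}\colon\Qb\to\cR$ is a trivial double cover; its two sheets are exchanged by the deck involution of $\tilde{p}$, which swaps $\Qb_+$ and $\Qb_-$, so one sheet lies in $\Qb_+$ and the other — which is $C_-(o)$ — in $\Qb_-$. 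By \cref[Proposition]{prop:position-ovals}, $C_-(o)$ is again a sphere with $b+1$ holes, its boundary circles being the $\Qb_-$-components of $\overline{o},\overline{o_1},\dots,\overline{o_b}$, all contained in $\RRb F$.

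Next I would locate $C_-(o)$ relative to the ramification surface $\cX(F)$. Because $Q$ misses $\RP^2$ we have $\cA(F)\cap\Qb=\Qb_+$, and the $m$ smoothings turning $\cA(F)\cup\RPb^2$ into $\cX(F)$ take place near $\cA(F)\cap\RPb^2$, which lies away from $\Qb$; hence $\cX(F)\cap\Qb=\Qb_+$. Therefore $\mathrm{int}(C_-(o))\subset\mathrm{int}(\Qb_-)$ is disjoint from $\cX(F)$, while every boundary circle of $C_-(o)$ lies in $\RRb F=\partial\Qb_+\subset\cX(F)$; so $C_-(o)$ meets the ramification surface exactly along $\partial C_-(o)$, and $\Theta$ restricts to a genuine double cover over $\mathrm{int}(C_-(o))$. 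I would then check that the monodromy of this double cover around a loop parallel to a boundary circle $\gamma\subset\partial C_-(o)$ is trivial — such a loop is a push-off of $\gamma$ off the ramification surface and links it trivially. Since $C_-(o)$ is planar, such boundary-parallel loops generate $\pi_1(\mathrm{int}(C_-(o)))$, so the double cover over $\mathrm{int}(C_-(o))$ is trivial, i.e. two copies, and these two copies come together along $\partial C_-(o)$, where $\Theta$ is one-to-one (those circles being in the branch locus). Thus $\tilde{C}(o)$ is the closed surface obtained by gluing two copies of the planar surface $C_-(o)$ along all $b+1$ of their boundary circles, that is, the topological double of $C_-(o)$.

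It remains to identify this double and the map. One has $\chi(\tilde{C}(o))=2\chi(C_-(o))=2(1-b)$, so $\tilde{C}(o)$ is the closed orientable surface $\Sigma_b$; in particular it is orientable, $\Theta\colon\tilde{C}(o)\to C_-(o)$ is two-to-one over $\mathrm{int}(C_-(o))$, and $\Theta$ restricts to a diffeomorphism $\Theta^{-1}(\partial C_-(o))\to\partial C_-(o)$. The nontrivial deck involution $\tau$ of $\Theta$ is orientation-preserving, preserves $\tilde{C}(o)$, interchanges its two copies of $C_-(o)$, and fixes the $b+1$ gluing circles pointwise; it is therefore a reflection of $\Sigma_b$ whose fixed locus is these $b+1$ disjoint circles, which cut $\Sigma_b$ into the two planar halves, and $\Theta|_{\tilde{C}(o)}$ is the corresponding quotient map — exactly the assertion.

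The step I expect to be the main obstacle is justifying the local-to-global picture at $\partial C_-(o)$: showing that $\Theta^{-1}(C_-(o))$ really is the closed double of $C_-(o)$ rather than, say, $C_-(o)\sqcup\tau(C_-(o))$ or a surface with boundary. This amounts to controlling precisely how $C_-(o)$ abuts $\cX(F)$ along $\partial C_-(o)$ (so that the two interior sheets merge there into a single copy of each boundary circle) and to verifying that the interior double cover is trivial, which is exactly where the planarity of $C_-(o)$ — and hence the fact that its $\pi_1$ is carried by boundary-parallel loops — is genuinely used; the count that $C_-(o)$ has precisely $b+1$ boundary components, neither more nor fewer, is in turn where the nesting of the lifted ovals in $\Qb$ from \cref[Proposition]{prop:position-ovals} enters.
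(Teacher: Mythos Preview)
Your proposal is correct and follows essentially the same approach as the paper, which in fact treats this proposition as already established by the discussion immediately preceding it (hence the bare \qedsymbol): $C_-(o)$ meets the branch locus $\cX(F)$ only along its boundary, $\Theta$ is a diffeomorphism there and two-to-one on the interior, and planarity forces $\tilde{C}(o)$ to be two copies of $C_-(o)$ glued along their boundaries with $\tau$ exchanging them. Your write-up is more careful than the paper's on the one genuinely delicate point---why the interior double cover over $C_-(o)$ is trivial---and your identification of that step as the main obstacle is apt; the paper simply asserts it from planarity.
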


\begin{figure}[t]
    \centering
    \begin{tikzpicture}
        \node[inner sep=0pt] at (0,0) {\includegraphics{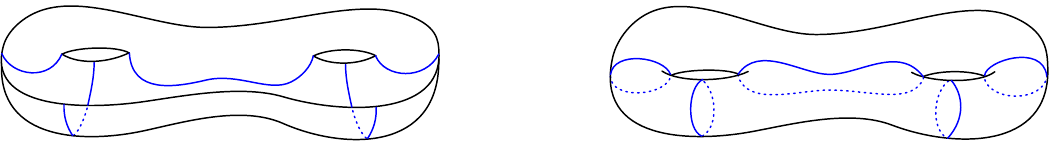}};
        \node at (0,0) {$\overset{\Theta}{\longleftarrow}$};
        \node at (-3,-1.1) {$\chi(o)=-1$};
        \node at (3,-1.1) {$e(Y,\tilde{C}(o))=+4$};
        \node at (-3,1.1) {$C_-(o)$};
        \node at (3,1.1) {$\tilde{C}(o)$};
    \end{tikzpicture}
    \caption{The ``pseudo'' branched cover $\tilde{C}(o)\to C_-(o)$.}\label{fig:pseudo-BC}
\end{figure}

The same construction works for $J$: there are two path-connected subsets $D_\pm(J)$ of $\Qb_\pm$ that have $\overline{J}$ as a part of their boundary. Letting $\tilde{D}(J)=\Theta^{-1}(D_-(J))$, we have that $\tilde{D}(J)$ is a surface of genus $e$ the number of exterior ovals in $\RR F$ (those\linebreak
not included in any other), and the restriction $\Theta:\tilde{D}(J)\to D_-(J)$ is again the\linebreak
quotient of $\Sigma_e$ by reflection along a plane in the middle.

Given an oval $o\subset\RR F$, we denote as $\chi(o)=\chi(C_-(o))$ the Euler characteristic of the connected subset of $\RP^2\smallsetminus\RR F$ bounded by $o$ from outside. Similarly, we~let $\chi(J)=\chi(D_-(J))$. One remarks that $\chi(o)\leqslant1$, with equality if and only if $o$ is empty, and that $\chi(J)=1-e$ with $e$ the number of exterior ovals.

\begin{proposition}
    Let $o,o'\subset\RR F$ be ovals, and denote again by  $J$ the non-contractible component of $\RR F$.
    \begin{itemize}[leftmargin=21pt,itemsep=3pt]
        \item[\textup{(1)}] We have $Q_Y(\tilde{C}(o),\tilde{C}(o))=-4\chi(o)$ and $Q_Y(\tilde{D}(J),\tilde{D}(J))=-4\chi(J)=\linebreak4(e-1)$.
        \item[\textup{(2)}] We have $Q_Y(\tilde{C}(o),\tilde{D}(J))=0$. If $o\neq o'$, then $Q_Y(\tilde{C}(o),\tilde{C}(o'))=0$.
    \end{itemize}
\end{proposition}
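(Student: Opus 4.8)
The plan is to obtain the self-intersections in (1) by computing the normal Euler numbers $e(Y,\tilde C(o))$ and $e(Y,\tilde D(J))$ — these equal $Q_Y(\tilde C(o),\tilde C(o))$ and $Q_Y(\tilde D(J),\tilde D(J))$ since both surfaces are closed and oriented — by pushing the computation down through the branched cover $\Theta$, and to get the vanishing statements in (2) from disjointness. For (1), the key point is that $\tilde C(o)=\Theta^{-1}(C_-(o))$, where $C_-(o)\subset\Qb_-\subset\Qb$ is a complementary region of $\RRb F$ that meets the ramification locus $\cX(F)$ of $\Theta$ exactly along its boundary $\partial C_-(o)\subset\RRb F$; over $\operatorname{int}C_-(o)$ the map $\Theta$ is an honest trivial $2$-fold cover, and it folds the two sheets together along $\partial C_-(o)$. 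First, since the proof of \cref[Lemma]{lem:conic-lagrangian} is local ($\nu\Qb\cong-2\,T\Qb$ in $\CPb^2$), it applies verbatim to the submanifold-with-boundary $C_-(o)\subset\Qb$ and gives $e_\theta(\CPb^2,C_-(o))=-2\chi(C_-(o))=-2\chi(o)$, where $e_\theta$ is the relative normal Euler number for the canonical nonvanishing section $\theta$ of the rank-one sub-bundle $(\nu C_-(o)\cap T\cX(F))|_{\partial C_-(o)}$ — concretely, the unit tangent direction along the boundary circles inside $\Qb$. Second, a relative-Euler-number computation along $\Theta$ — in the same spirit as the earlier passages yielding $e(\bS^4,A^+(F))=m^2/2$ and $e(\CPb^2,\overline{A}^+(F))=m^2$ — shows that the branched cover picks up a factor $2$, because each interior point of $C_-(o)$ has two preimages and the section $\theta$ lifts compatibly so as to close up $\tilde C(o)$ across the fold $\Theta^{-1}(\partial C_-(o))$. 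Hence $e(Y,\tilde C(o))=2\,e_\theta(\CPb^2,C_-(o))=-4\chi(o)$ (a sanity check: if $C_-(o)$ is a pair of pants then $\chi(o)=-1$ and one gets $+4$, as in \cref[Figure]{fig:pseudo-BC}), and the identical argument with $D_-(J)$ in place of $C_-(o)$ gives $e(Y,\tilde D(J))=2\,e_\theta(\CPb^2,D_-(J))=-4\chi(D_-(J))=-4\chi(J)=4(e-1)$, using $\chi(J)=1-e$.

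For (2), note that for distinct ovals $o\neq o'$ the sets $C(o)$ and $C(o')$ are distinct connected components of $\cR\smallsetminus p(\RR F)$, so their preferred lifts $C_-(o)$ and $C_-(o')$ are distinct complementary regions of $\RRb F$ inside $\Qb_-$; likewise for $C_-(o)$ versus $D_-(J)$. Since $\partial\Qb_\pm=\RRb F$ forces $\Qb_+$ and $\Qb_-$ to be exactly the two classes of the $2$-colouring of the regions of $\Qb\smallsetminus\RRb F$, every circle of $\RRb F$ borders exactly one region contained in $\Qb_-$; consequently two distinct regions of $\Qb_-$ have disjoint closures. Pulling back, $\tilde C(o)$ and $\tilde C(o')$ (resp.\ $\tilde C(o)$ and $\tilde D(J)$) are disjoint closed surfaces in $Y$, and disjoint closed oriented surfaces have vanishing algebraic intersection, so $Q_Y(\tilde C(o),\tilde C(o'))=0$ and $Q_Y(\tilde C(o),\tilde D(J))=0$. (One could also avoid the colouring remark by isotoping the fold of one preimage off a shared boundary circle, as in the resolution of singularities above, to realize actual disjointness.)

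The step I expect to be the main obstacle is the relative-Euler-number bookkeeping in (1): $\Theta\colon\tilde C(o)\to C_-(o)$ is not a $2$-fold branched cover in the sense of \cref[Lemma]{lem:branched-cover-euler-number}, since its ``branching'' occurs along the whole boundary $\partial C_-(o)$, which lies inside the ramification locus of $\Theta$, so neither case (i) nor case (ii) of that lemma applies verbatim. One has to check carefully that the canonical boundary section $\theta$ of $\nu C_-(o)$ pulls back to a section whose obstruction extends consistently over the fold $\Theta^{-1}(\partial C_-(o))$, and that the resulting integer is exactly $2\,e_\theta(\CPb^2,C_-(o))$ — this is the analogue, for a surface whose boundary sits on the ramification locus, of the computation passing from $A^+(F)$ to $\overline{A}^+(F)$ (and of \cref[Lemma]{lem:branched-cover-euler-number} itself).
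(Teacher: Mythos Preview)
Your proposal is correct and follows essentially the same route as the paper: for (1), apply \cref[Lemma]{lem:conic-lagrangian} to get $e(\CPb^2,C_-(o))=-2\chi(o)$ and then pull back through $\Theta$ to pick up a factor of $2$, invoking orientability of $\tilde C(o)$ to identify the normal Euler number with $Q_Y$; for (2), observe that the regions $C_-(o)$, $C_-(o')$, $D_-(J)$ are pairwise disjoint in $\Qb_-$ (your $2$-colouring remark is exactly the paper's parenthetical ``it is possible that $C_-(o)\cap C_+(o')\neq\varnothing$'' in contrapositive form), hence their preimages are disjoint in $Y$.

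The one place you are more careful than the paper is the point you flag yourself: the paper simply cites \cref[Lemma]{lem:branched-cover-euler-number} for $e(Y,\tilde C(o))=2\,e(\CPb^2,C_-(o))$, without commenting that neither case of that lemma literally covers a membrane whose boundary lies in the branch locus. Your discussion of the boundary section $\theta$ and how it closes up across the fold is the honest justification of that step; the paper leaves it implicit.
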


\begin{proof}
    For the first claim, observe $e(\CPb^2,C_-(o))=-2\chi(o)$ and $e(\CPb^2,D_-(J))=-2\chi(J)$, by using \cref[Lemma]{lem:conic-lagrangian}. Next, from \cref[Lemma]{lem:branched-cover-euler-number}, we can see that $$e(Y,\tilde{C}(o))=2e(\CPb^2,C_-(o))\text{ and }e(Y,\tilde{D}(J))=2e(\CPb^2,D_-(J)).$$
    To derive $Q_Y(\tilde{C}(o),\tilde{C}(o))$ and $Q_Y(\tilde{D}(J),\tilde{D}(J))$, we remark that $\tilde{C}(o)$ and $\tilde{D}(J)$\linebreak
    are orientable surfaces, so the self-intersection and the evaluation of the intersection\linebreak
    form agree.
    
    For the second claim, distinct ovals $o$ and $o'$ cannot satisfy $C_-(o)\cap C_-(o')\neq\varnothing$, even if one is included inside the other (but, it is possible that $C_-(o)\cap C_+(o')\neq\varnothing$). The same goes for $C_-(o)\cap D_-(J)=\varnothing$. As such, the surfaces $\tilde{C}(o)$, $\tilde{C}(o')$ and\linebreak
    $\tilde{D}(J)$ are non-intersecting in $Y$.
\end{proof}

The homology classes of the surfaces $\tilde{C}(o_i)$, $i\in[\![1,\ell]\!]$, and $\tilde{D}(J)$ were respec\-tively denoted as $\beta_i$ and $\beta_0$ by Viro and Zvonilov (where $\ell$ denotes the number of ovals in $\RR F$). They showed the following result.

\begin{lemma}[{\cite[Lemma 1.3]{VZ92}}]\label{lem:VZ-Smith}
    Let $h=p^r$ be a prime power. Let $\nu:Y\to X$ be an $h$-sheeted cyclic covering between two $n$-manifolds, branched over a codimension-two subset $A\subset X$. Let $B\subset X$ be a membrane, let $b$ be the class in $H_k(X,A)$ determined by $B$, and let $\beta$ be the class in $H_k(Y)$ determined by $\nu^{-1}(B)$, oriented coherently with $B$. Let $\tau:Y\to Y$ be a generator of $\Aut(\nu)$, and let $\varrho=1-\tau\in(\ZZ/p)[\Aut(\nu)]$. Recall the Smith long exact sequence in homology (with coefficients in $\ZZ/p$):
    $$\cdots\to H_{k+1}^\varrho(Y)\overset{\partial}{\longrightarrow}H_k(X,A)\oplus H_k(A)\overset{\alpha_k}{\longrightarrow}H_k(Y)\overset{\varrho_\ast}{\longrightarrow}H_k^\varrho(Y)\to\cdots.$$
    Then, the restriction $\tilde{\alpha}_k:H_k(X,A)\to H_k(Y)$ maps $b$ to $\beta$, and
    \begin{itemize}[leftmargin=21pt,itemsep=3pt]
        \item[\textup{(1)}] $\alpha_{n-1}$ is monic if $H_n^\varrho(Y)=0$;
        \item[\textup{(2)}] $\tilde{\alpha}_{n-2}$ is monic if $X$ is connected and $H_{n-1}(Y)=0$;
        \item[\textup{(3)}] if $\lfloor(n+1)/2\rfloor\leqslant k<n-2$, then $\alpha_k$ is monic if $X$ and $A$ are connected and if $H_i(Y)=0$ for all $k+1\leqslant i\leqslant n-1$.
    \end{itemize}
\end{lemma}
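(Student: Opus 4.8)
The statement is \cite[Lemma 1.3]{VZ92}, so rather than reprove it from scratch I would recall the skeleton of its proof. The plan is to deduce everything from Smith theory for the $\ZZ/h$-action on $Y$ with quotient $X$ and fixed-point set $\tilde{A}=\nu^{-1}(A)$, working with $\ZZ/p$ coefficients throughout. The key structural input is that, because $h=p^r$, the group ring $(\ZZ/p)[\ZZ/h]$ is the local ring $(\ZZ/p)[s]/(s^{h})$ with $s=\varrho=1-\tau$ a uniformizer. I would view $C_\ast(Y;\ZZ/p)$ as a complex of $(\ZZ/p)[\ZZ/h]$-modules, form the subcomplex $\varrho C_\ast(Y)$, set $H_\ast^\varrho(Y)=H_\ast(\varrho C_\ast(Y))$, and identify the homology of the quotient $C_\ast(Y)/\varrho C_\ast(Y)$ with $H_\ast(X,A)\oplus H_\ast(A)$, using that the action is free over $X\smallsetminus A$ (where the coinvariants recover $C_\ast(X\smallsetminus A)$) and trivial over $\tilde{A}\cong A$. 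The Smith sequence of the statement is then the long exact sequence of $0\to\varrho C_\ast(Y)\to C_\ast(Y)\to C_\ast(Y)/\varrho C_\ast(Y)\to 0$, with $\varrho_\ast$ induced by $\varrho\colon C_\ast(Y)\to\varrho C_\ast(Y)$.

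Next I would pin down $\tilde{\alpha}_k$ and the equality $\tilde{\alpha}_k(b)=\beta$. On the $H_k(A)$-summand, $\alpha_k$ is the composite $H_k(A)\cong H_k(\tilde{A})\to H_k(Y)$; on the $H_k(X,A)$-summand, $\tilde{\alpha}_k$ is the branched transfer. To check $\tilde{\alpha}_k(b)=\beta$, represent $b$ by a membrane $B$ whose interior is transverse to $A$ and which meets $A$ cleanly along $\partial B$. As a chain, $\nu^{-1}(B)$ is the sum of all lifts of the simplices of $B$; since $\nu$ is one-to-one over $A$ and an honest $h$-fold covering away from it, its boundary equals $h$ times the single lift of $\partial B$ and hence vanishes mod $p$, so $\nu^{-1}(B)$ is a genuine $\ZZ/p$-cycle in $Y$. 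Unwinding the identification of $C_\ast(Y)/\varrho C_\ast(Y)$ then gives $\alpha_k(b,0)=[\nu^{-1}(B)]=\beta$, once this cycle is oriented coherently with $B$; the only delicate point here is the orientation bookkeeping.

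Finally I would obtain the three monicity assertions as diagram chases in the Smith sequence, supplemented by Poincaré--Lefschetz duality for the special homology $H_\ast^\varrho$ — valid, with the appropriate degree shift, because $\tilde{A}$ has codimension two, and the connectedness hypotheses on $X$ (and on $A$ in (3)) are exactly what makes the extremal-degree terms of this duality computable. For (1), $\ker\alpha_{n-1}$ is the image of $\partial\colon H_n^\varrho(Y)\to H_{n-1}(X,A)\oplus H_{n-1}(A)$, which vanishes when $H_n^\varrho(Y)=0$. For (2), $\ker\tilde{\alpha}_{n-2}$ is cut out by the image of $\partial\colon H_{n-1}^\varrho(Y)\to H_{n-2}(X,A)\oplus H_{n-2}(A)$; the hypothesis $H_{n-1}(Y)=0$ makes $\partial$ injective on $H_{n-1}^\varrho(Y)$, and duality together with connectedness of $X$ forces its image to meet the $H_{n-2}(X,A)$-summand trivially. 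For (3), I would run a descending induction: the vanishing of $H_i(Y)$ for $k+1\le i\le n-1$ identifies $H_i^\varrho(Y)\cong H_{i-1}(X,A)\oplus H_{i-1}(A)$ across that range, and duality turns $H_{k+1}^\varrho(Y)$ into a low-degree special homology group which the connectedness of $X$ and $A$ (with $\lfloor(n+1)/2\rfloor\le k$) force to vanish, so that $\partial=0$ and $\alpha_k$ is monic.

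The hard part is not any of these chases but the bookkeeping in setting up Smith theory for a $\ZZ/p^r$-action rather than a $\ZZ/p$-action: one must keep track of which power of $\varrho$ plays the role of the norm element $1+\tau+\cdots+\tau^{h-1}$, verify the subcomplex/quotient identifications above, and establish the Poincaré duality for $H_\ast^\varrho$. Once that framework is in place the three parts follow as indicated, which is why I would in practice treat the lemma as a black box imported from \cite{VZ92} and only reconstruct the argument at this level of detail.
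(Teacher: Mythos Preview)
The paper does not prove this lemma at all: it is quoted verbatim from \cite[Lemma~1.3]{VZ92} and used as a black box, exactly as you suggest doing in your final paragraph. Your sketch of the Smith-theory argument is a reasonable reconstruction of how Viro and Zvonilov prove it, and goes well beyond what the present paper requires; for the purposes of this paper, simply citing the lemma is sufficient.
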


We can now prove an analogue to their Corollary 1.5.C.

\begin{corollary}\label{cor:rank}
    The set $\{\tilde{C}(o_i)\mid1\leqslant i\leqslant\ell\}\cup\{\tilde{D}(J)\}$ has rank at least $\ell$ (where $\ell$\linebreak
    is the number of ovals $o_1,\dots,o_\ell$ of the curve). If the family has rank $\ell+1$, then\linebreak
    the curve is type I.
\end{corollary}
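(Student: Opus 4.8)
The plan is to adapt Viro and Zvonilov's argument for their Corollary 1.5.C to the double branched cover $\Theta\colon Y\to\CPb^2$; the rank in the statement is that of the subgroup of $H_2(Y;\ZZ)$ generated by the $\ell+1$ classes $[\tilde{C}(o_1)],\dots,[\tilde{C}(o_\ell)],[\tilde{D}(J)]$.

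First I would invoke \cref[Lemma]{lem:VZ-Smith} with $n=4$, $p=2$, $X=\CPb^2$, $A=\cX(F)$ and $\nu=\Theta$. Each $C_-(o_i)$ and $D_-(J)$ is a membrane for $\cX(F)$: its interior lies in $\Qb_-\smallsetminus\RRb F$, disjoint from $\cX(F)$, its boundary lies in $\RRb F\subset\Qb_+\subset\cX(F)$, and $\Theta^{-1}(C_-(o_i))=\tilde{C}(o_i)$, $\Theta^{-1}(D_-(J))=\tilde{D}(J)$. They therefore determine classes $b_i,b_0\in H_2(\CPb^2,\cX(F);\ZZ/2)$, and part (2) of \cref[Lemma]{lem:VZ-Smith} --- whose hypotheses hold since $\CPb^2$ is connected and $H_3(Y;\ZZ/2)=0$ (by \cref[Proposition]{prop:torsionHomology}, whose proof also yields $H_1(Y;\ZZ/2)=0$, together with $\ZZ/2$-Poincaré duality on the closed manifold $Y$) --- gives that $\tilde{\alpha}_2\colon H_2(\CPb^2,\cX(F);\ZZ/2)\to H_2(Y;\ZZ/2)$ is injective and carries $b_i\mapsto\beta_i:=[\tilde{C}(o_i)]$ and $b_0\mapsto\beta_0:=[\tilde{D}(J)]$. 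Since a mod-$2$ independent subset of $H_2(Y;\ZZ)$ is automatically $\ZZ$-independent, it then suffices to determine the $\ZZ/2$-rank of $\{b_0,\dots,b_\ell\}$ and to locate the relations among these classes.

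The one relation comes from the sphere $\Qb$: by \cref[Proposition]{prop:position-ovals}, $\Qb_-$ is the disjoint union $D_-(J)\sqcup C_-(o_1)\sqcup\dots\sqcup C_-(o_\ell)$ of planar pieces, so $b_0+\sum_{i=1}^{\ell}b_i=[\Qb_-]$ in $H_2(\CPb^2,\cX(F);\ZZ/2)$; but $[\Qb_+]=0$ there because $\Qb_+\subset\cX(F)$, and $[\Qb]=[\Qb_+]+[\Qb_-]$ is the image of the even --- hence $\ZZ/2$-trivial --- class $[\Qb]\in H_2(\CPb^2;\ZZ/2)$. Thus $b_0+\sum_i b_i=0$, so the $\ZZ/2$-rank is at most $\ell$. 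The crux is that it equals $\ell$, i.e. that $\{b_1,\dots,b_\ell\}$ is $\ZZ/2$-independent. For this I would post-compose with $\partial\colon H_2(\CPb^2,\cX(F);\ZZ/2)\to H_1(\cX(F);\ZZ/2)$: writing $\gamma_j\subset\overline{o_j}$ for the boundary circle of $C_-(o_j)$ that lies over $o_j$, one has $\partial b_i=[\gamma_i]+\sum_{o_j}[\gamma_j]$, the sum over the ovals $o_j$ directly enclosed by $o_i$, so in a relation $\sum_{i\in S}b_i=0$ the coefficient of $[\gamma_j]$ is nonzero in $\ZZ/2$ exactly when precisely one of $o_j$ and its immediate parent lies in $S$. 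Provided the circles $\gamma_1,\dots,\gamma_\ell$ are linearly independent in $H_1(\cX(F);\ZZ/2)$, all these coefficients vanish, and an induction down the nesting tree starting from the outermost ovals forces $S=\varnothing$. The independence of $\{\gamma_j\}$ in $H_1(\cX(F);\ZZ/2)$ is where the real work lies, and I expect it to be the main obstacle: the $\gamma_j$ form a proper sub-collection of the boundary circles of the planar (disconnected) surface $\Qb_+$, hence are independent in $H_1(\Qb_+;\ZZ/2)$, and one must check that no nontrivial sum of them becomes a boundary in $\cX(F)=\overline{A}^+(F)\cup_{\RRb F}\Qb_+$ (modified near the $m$ points of $\cA(F)\cap\RPb^2$), by a Mayer--Vietoris computation resting on the description of $\overline{A}^+(F)=\tilde{p}^{-1}(F/\conj)$ as a branched double of $F/\conj$. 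Over $\ZZ$ the intersection form of $Y$ already disposes of part of this, since $\beta_i\cdot\beta_i=-4\chi(o_i)\neq0$ whenever $o_i$ bounds a region of nonzero Euler characteristic, so the genuinely delicate case is that of ovals with $\chi(o_i)=0$.

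With the $\ZZ/2$-rank of $\{b_0,\dots,b_\ell\}$ equal to $\ell$, the rank of $\{\tilde{C}(o_1),\dots,\tilde{C}(o_\ell),\tilde{D}(J)\}$ in $H_2(Y;\ZZ)$ is at least $\ell$, and being a family of $\ell+1$ classes it is $\ell$ or $\ell+1$. For the last claim, suppose the rank is $\ell+1$, so $\beta_0,\dots,\beta_\ell$ are $\ZZ$-independent. Since $\beta_0+\sum_i\beta_i\equiv0\pmod2$ and $H_2(Y;\ZZ)$ has no $2$-torsion (because $H_1(Y;\ZZ/2)=0$), we may write $\beta_0+\sum_i\beta_i=2\gamma$ with $\gamma\in H_2(Y;\ZZ)$ nonzero; as the $\beta_i$ pair to zero with one another while $\beta_i\cdot\beta_i=-4\chi(o_i)$, $\beta_0\cdot\beta_0=-4\chi(J)$ and $\chi(J)+\sum_o\chi(o)=\chi(\Qb_-)=1$, this gives $4\,\gamma\cdot\gamma=\bigl(\beta_0+\sum_i\beta_i\bigr)^{2}=-4$, so $\gamma\cdot\gamma=-1$. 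Tracking the deck involution of $\tilde{p}$, which interchanges $\Qb_+$ and $\Qb_-$, one identifies this $2$-divisibility with orientability of the Arnold surface $\overline{A}^+(F)$ --- equivalently, exactly as for the Arnold surface in even degree, with $F$ being of type I; making this last equivalence precise is a second, smaller technical point.
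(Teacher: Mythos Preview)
Your setup and the rank-$\geq\ell$ argument follow the paper's route exactly: the paper's own proof simply invokes \cref[Lemma]{lem:VZ-Smith} to get $\tilde\alpha_2$ injective and then defers everything else to \cite[\S2.4]{VZ92} without further detail, so your reconstruction is already more explicit than what is written there. The relation $b_0+\sum_i b_i=0$ coming from $[\Qb_-]$ and the independence of $b_1,\dots,b_\ell$ via the boundary map into $H_1(\cX(F);\ZZ/2)$ together with the nesting tree are the right ingredients; the gap you flag (independence of the $\gamma_j$ in $H_1(\cX(F);\ZZ/2)$) is precisely the step that \cite{VZ92} handles.

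The type-I paragraph, however, does not work as written. Your computation
\[
\Bigl(\beta_0+\sum_i\beta_i\Bigr)^2=-4\Bigl(\chi(J)+\sum_i\chi(o_i)\Bigr)=-4\,\chi(\Qb_-)=-4
\]
is valid \emph{unconditionally}: it uses only the pairwise orthogonality and self-intersections established just before the corollary, none of which depend on the type of $F$ or on the rank of the family. In particular it already forces $\beta_0+\sum_i\beta_i\neq0$, hence $\gamma\neq0$ with $\gamma\cdot\gamma=-1$, for \emph{every} $Q$-flexible curve. So neither the $2$-divisibility (which you derived from the $\ZZ/2$ relation, again independent of type) nor the value $\gamma\cdot\gamma=-1$ can detect type; the rank-$(\ell+1)$ hypothesis is never actually used. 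The argument one needs is the contrapositive: when $F$ is type II the half $A^+(F)=F/\conj$ is nonorientable, and from a one-sided loop on $\cX(F)$ one produces an extra membrane whose lift gives an additional \emph{integral} relation among $\beta_0,\dots,\beta_\ell$, pinning the rank at $\ell$. Your remark about the deck involution of $\tilde p$ is pointing at the right piece of geometry but does not supply this membrane; this is a genuine gap rather than the ``smaller technical point'' you call it.
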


\begin{proof}
    We can apply \cref[Lemma]{lem:VZ-Smith} in our setting, where $\nu=\Theta:Y\to(\CPb^2,\cX(F))$ and $h=2$. We then see that $$\tilde{\alpha}_2:H_2(\CPb^2,\cX(F);\ZZ/2)\to H_2(Y;\ZZ/2)$$
    is injective, because $\CPb^2$ is connected and $H_3(Y;\ZZ/2)=0$ (\cref[Proposition]{prop:torsionHomology}). Noting that $\tilde{\alpha}_2(C_-(o))=\tilde{C}(o)$ and $\tilde{\alpha}_2(D_-(J))=\tilde{D}(J)$, the claim follows from the very same arguments as in \cite[\S2.4]{VZ92}.
\end{proof}

Recall that $\ell^\pm$ and $\ell^0$ denote the number of ovals of the curve that bound from~the outside a component of $\RP^2\hspace{-.05em}\smallsetminus\hspace{-.05em}\RR F$ with positive/negative or zero Euler \mbox{characteristic,}
respectively. The previous results finally wraps up to yield our main theorem.

\begin{theorem}\label{thm:main}
    Let $F$ be a $Q$-flexible curve of odd degree $m$. Then $$\ell^0+\ell^-\leqslant\frac{(m-1)^2}{4}.$$
    If equality holds, then the curve is type I.
\end{theorem}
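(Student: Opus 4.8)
The plan is to push everything into the closed $4$-manifold $Y$ furnished by the double branched cover $\Theta\colon Y\to(\CPb^2,\cX(F))$ and to pit its intersection form $Q_Y$ against the numerical data already computed, chiefly $b_2^+(Y)=\tfrac{(m-1)^2}{4}$. All the geometry is in hand: the $\ell+1$ surfaces $\tilde{C}(o_1),\dots,\tilde{C}(o_\ell),\tilde{D}(J)$ are pairwise disjoint in $Y$, so their classes $\beta_1,\dots,\beta_\ell,\beta_0$ are pairwise $Q_Y$-orthogonal, with $Q_Y(\beta_i,\beta_i)=-4\chi(o_i)$ and $Q_Y(\beta_0,\beta_0)=-4\chi(J)=4(e-1)$, where $e$ is the number of exterior ovals of $\RR F$. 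I sort the generators by the sign of their self-intersection: $\beta_i^2>0$ for the $\ell^-$ ovals bounding a component of negative Euler characteristic, $\beta_i^2=0$ for the $\ell^0$ ovals bounding a component of zero Euler characteristic, $\beta_i^2<0$ for the $\ell^+$ empty ovals, and $\beta_0^2$ is positive, zero or negative according to whether $e\geqslant 2$, $e=1$, or $e=0$.

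For the inequality, I split $H_2(Y;\RR)$ using these classes into two mutually orthogonal pieces: let $B$ be the span of the $\ell^+$ classes $\beta_i$ of empty ovals (together with $\beta_0$ in the degenerate case $e=0$), and let $A$ be the span of all the other classes -- the $\ell^0+\ell^-$ classes of non-empty ovals, together with $\beta_0$ when $e\geqslant 1$. Then $A\perp B$, the form $Q_Y|_B$ is negative definite, and $Q_Y|_A$ is positive semidefinite, being represented by a diagonal matrix with non-negative entries in the $\beta$-generators. By the elementary fact that a positive-semidefinite subspace of a nondegenerate real form embeds, via the orthogonal projection onto a maximal positive-definite subspace, into its positive part, we obtain $\dim A\leqslant b_2^+(Y)=\tfrac{(m-1)^2}{4}$. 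On the other hand $A\cap B=\{0\}$ (a non-zero vector cannot have self-intersection both $\geqslant 0$ and $<0$), and $A\oplus B$ is the span of all the $\beta_i$ and $\beta_0$, so \cref[Corollary]{cor:rank} gives $\dim A+\dim B=\dim(A\oplus B)\geqslant\ell$; since $\dim B\leqslant\ell^+$ in the case $e\geqslant 1$, this forces $\dim A\geqslant\ell-\ell^+=\ell^0+\ell^-$. Chaining the two estimates yields $\ell^0+\ell^-\leqslant\tfrac{(m-1)^2}{4}$. The case $e=0$ carries no content, as a curve with no exterior oval has no oval at all -- an outermost oval would be exterior -- so then $\ell^0+\ell^-=0$; this also subsumes the degenerate degree $m=1$.

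For the equality clause, assume $\ell^0+\ell^-=\tfrac{(m-1)^2}{4}$, so necessarily $m\geqslant 3$ and $e\geqslant 1$, and every inequality in the chain above is an equality. Thus $A$ is a positive-semidefinite subspace of $H_2(Y;\RR)$ of the maximal possible dimension $b_2^+(Y)$, spanned by the $\ell^0+\ell^-+1$ classes just listed with precisely a one-dimensional space of linear relations among them, and $\dim B=\ell^+$, so that $\mathrm{rk}\{\beta_0,\dots,\beta_\ell\}=\ell$ exactly. To conclude that $F$ is of type~I one must now extract the curve's type from this extremal situation. Following \cite[\S2.4]{VZ92}, this is done by returning to the Smith exact sequence of \cref[Lemma]{lem:VZ-Smith} for the covering $\nu=\Theta$ and inspecting its $\varrho$-twisted term $H^{\varrho}_\ast(Y)$: the saturation of the numerical bound is incompatible with the contribution that $H^{\varrho}_\ast(Y)$ would receive were $F$ of type~II, forcing $F$ to be type~I.

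I expect this last step to be the main obstacle. The inequality proper is essentially bookkeeping: the orthogonal splitting against $b_2^+(Y)$, together with the rank estimate already established in \cref[Corollary]{cor:rank}. Obtaining type~I from the equality case, however, requires the finer, $\varrho$-equivariant part of the Smith--Gysin machinery rather than the bare intersection form, and along the way one must keep track of the auxiliary numerical inputs (for instance that $b_2^-(Y)-\ell^+>b_2^+(Y)$, which guarantees that the maximal isotropic subspaces appearing in the argument have the dimension one expects).
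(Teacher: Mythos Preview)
Your argument is correct and follows essentially the same route as the paper: isolate the $\ell^0+\ell^-+1$ pairwise $Q_Y$-orthogonal classes with non-negative self-intersection, bound the rank of their span above by $b_2^+(Y)=\tfrac{(m-1)^2}{4}$ and below by $\ell^0+\ell^-$ via \cref[Corollary]{cor:rank}, and defer the type~I extremal clause to the Smith-theoretic argument of \cite{VZ92}. The paper's write-up is marginally more direct in that it works only with the subset $\mathcal{P}$ of non-negative classes and invokes \cref[Corollary]{cor:rank} straight on that subset (any $k$ of the $\ell+1$ classes have rank at least $k-1$), whereas you pass through the auxiliary negative-definite span $B$; but this is a cosmetic difference, not a substantive one.
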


\begin{proof}
    Take the maximal subset $\mathcal{P}$ of $\{\tilde{C}(o_i)\mid1\leqslant i\leqslant\ell\}\cup\{\tilde{D}(J)\}$ that spans a subspace of $H_2^+(Y)$, and let $r=\mathrm{rank}(\mathcal{P})$. Then, we obtain $r\leqslant b_2^+(Y)$. Moreover, because of $Q_Y(\tilde{C}(o),\tilde{C}(o))=-4\chi(o)$ and similarly for $\tilde{D}(J)$, observe that $\mathcal{P}$\linebreak
    has exactly $\ell^0+\ell^-+1$ elements (assuming that there is at least one oval to have $\tilde{D}(J)\in\mathcal{P}$; if there are none, the theorem is vacuous). Therefore, because of\linebreak
    \cref[Corollary]{cor:rank}, we deduce $r\geqslant\#\mathcal{P}-1=\ell^0+\ell^-$. This produces $$\ell^0+\ell^-\leqslant b_2^+(Y),$$
    which is the claimed inequality. The extremal case also follows from an almost word-for-word proof as in \cite{VZ92}.
\end{proof}

\section{Curves on a quadric}\label{sec:quadrics}\noindent
    We investigate our method for flexible curves in $\CP^1\times\CP^1$, with either of its\linebreak
    antiholomorphic involutions $c_1(x,y)=(\bar{x},\bar{y})$ or $c_2(x,y)=(\bar{y},\bar{x})$. This is motivated by recent work from Zvonilov \citeyear{Zvo22}, which generalizes \cite{VZ92} to flexible curves on almost-complex 4-manifolds. For a survey of results regarding curves in $\CP^1\times\CP^1$, we refer the reader to \cite{Mat91} or \cite{Gil91}.\linebreak
    We will also need the following result.

    \begin{theorem}{\cite[\S3]{Let84}}
        There are diffeomorphisms $\CP^1\times\CP^1/c_1\cong\bS^4$ and $\CP^1\times\CP^1/c_2\cong\CPb^2$.
    \end{theorem}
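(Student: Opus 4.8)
The plan is to prove the two diffeomorphisms by rather different devices; both are due to Letizia, so I would present this essentially as a sketch recalled from \cite{Let84}.

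For $c_2$, the idea is to convert the anti-holomorphic involution into a holomorphic one by an orientation-reversing change of coordinates. I would introduce the self-diffeomorphism $\psi(x,y)=(x,\bar y)$ of $\CP^1\times\CP^1$. A one-line computation in affine charts gives $\psi\circ c_2\circ\psi^{-1}=\tau$, where $\tau(x,y)=(y,x)$ is the coordinate swap; moreover $\psi$, being the identity on one factor and complex conjugation on the other, is orientation-reversing. Now $(\CP^1\times\CP^1)/\tau=\mathrm{Sym}^2(\CP^1)$, which is $\CP^2$ with its complex orientation (an unordered pair of points on $\CP^1$ is a degree-two effective divisor, that is, a point of $\PP H^0(\mathcal{O}_{\CP^1}(2))\cong\CP^2$, and the quotient map is a local biholomorphism away from the diagonal, hence orientation-preserving). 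Since $\psi$ conjugates $c_2$ to $\tau$ and reverses orientation, $(\CP^1\times\CP^1)/c_2$ is then $\CP^2$ with the reversed orientation, that is, $\CPb^2$.

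For $c_1=\conj\times\conj$, the key observation is that the two quotient maps $\CP^1\to\CP^1/\conj\cong D^2$ assemble into a map $\rho\colon(\CP^1\times\CP^1)/c_1\to D^2\times D^2=D^4$. I would check that over $\mathrm{int}\,D^4$, where $c_1$ acts freely, $\rho$ is an honest (hence trivial) double cover, while over $\partial D^4=\bS^3$ — the image of $(\RP^1\times\CP^1)\cup(\CP^1\times\RP^1)$ — it is a homeomorphism. Inspecting the local model $\RR^4/\bigl((a,b,c,d)\mapsto(a,-b,c,-d)\bigr)$ near a boundary stratum then shows that the two preimage balls attach to this $\bS^3$ as half-collars, so $(\CP^1\times\CP^1)/c_1$ is the double of $D^4$ along its boundary, hence $\bS^4$ (the gluing diffeomorphism is immaterial, since $\pi_0\mathrm{Diff}(\bS^3)$ is trivial).

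The step I expect to be the real obstacle, if one wants a fully self-contained argument rather than a citation, is the $c_1$ case: there is no soft, homotopy-theoretic shortcut, because a closed simply-connected homology $\bS^4$ is not known to be diffeomorphic to $\bS^4$. One is therefore forced to exhibit a genuine geometric decomposition — here, a double of a ball — which requires a careful analysis of the quotient along the fold locus $(\RP^1\times\CP^1)\cup(\CP^1\times\RP^1)$. A useful running sanity check is the Euler characteristic: from $\chi(\CP^1\times\CP^1)=4$ and the branched-cover formula $\chi=2\chi(\text{quotient})-\chi(\text{branch locus})$ one gets $\chi(\text{quotient})=2$ for $c_1$ (with branch locus the torus $\RP^1\times\RP^1$) and $\chi(\text{quotient})=3$ for $c_2$ (with branch locus the sphere $\{(x,\bar x)\}$), matching $\bS^4$ and $\CPb^2$ respectively.
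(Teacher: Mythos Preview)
The paper does not actually prove this theorem; it simply cites \cite{Let84} and records the statement, adding only the remark that the smooth structure on the free quotient $\CP^1\times\CP^1\smallsetminus\mathrm{Fix}(c_i)/c_i$ extends to the standard one on $\bS^4$ or $\CPb^2$. So there is no in-paper argument to compare your sketch against.

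That said, your sketch is essentially correct and is one of the standard routes to these facts. The $c_2$ case via the orientation-reversing conjugacy $\psi(x,y)=(x,\bar y)$ to the swap $\tau$, together with $\mathrm{Sym}^2(\CP^1)\cong\CP^2$, is clean and complete as written. For the $c_1$ case your picture is right --- the residual $\ZZ/2$-quotient $\rho$ to $D^2\times D^2$ is $2$-to-$1$ over the interior and $1$-to-$1$ over the boundary, exhibiting the quotient as a double of a $4$-ball --- but two spots deserve a little more care than you give them. First, $D^2\times D^2$ is a manifold with corners, so ``$\partial D^4=\bS^3$'' needs a word about smoothing. Second, the local model $(a,b,c,d)\mapsto(a,-b,c,-d)$ you write down is the model for $c_1$ near its fixed torus $\RP^1\times\RP^1$ (both coordinates real), not near a generic boundary point of $D^2\times D^2$ (only one coordinate real, where $c_1$ already acts freely); at the fixed torus this model gives the standard codimension-two branched double cover $z\mapsto z^2$ in the normal $(b,d)$-plane, so smoothness is fine, but the ``half-collar'' gluing of the two interior sheets is more naturally checked at the generic boundary stratum. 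None of this is a genuine gap --- just places where a full write-up would spend an extra sentence.
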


    More precisely, the differential structure on $\CP^1\times\CP^1\smallsetminus\mathrm{Fix}(c_i)/c_i$ extends to the standard one on $\bS^4$ or $\CPb^2$, respectively.

    Note that in the present work, we do not make any assumption regarding $\gcd(a,b)$ with $[F]=(a,b)\in H_2(\CP^1\times\CP^1)$, contrary to \cite{Zvo22} where there is\linebreak
    no result if $\gcd(a,b)=1$.
    
    \subsection*{Curves on a hyperboloid}

Consider the space $X=\CP^1\times\CP^1$ with its involution $c_1:([x_0:x_1],[y_0:y_1])\mapsto([\bar{x_0}:\bar{x_1}],[\bar{y_0}:\bar{y_1}])$. We call $(X,c_1)$ the \textit{hyperboloid}. Let $\fR=\mathrm{Fix}(c_1)=\RP^1\times\RP^1$. We consider $\fQ$ to be a generic real algebraic curve of bidegree $(2,2)$ and with empty real part $\RR\fQ=\varnothing\subset\fR$. We will prove the following result.

\begin{reptheorem}{thm:hyperboloid}
    Let $F$ be a $\fQ$-flexible curve in the hyperboloid with bidegree $(a,b)$ where both $a$ and $b$ are odd. Let $\ell^\pm$ and $\ell^0$ denote the number of ovals of the curve that bound from the outside a subset with positive, negative or zero Euler characteristic, respectively. Then $$\ell^-+\ell^0\leqslant\frac{ab+1}{2}.$$
\end{reptheorem}

Note that $H_2(X;\ZZ)$ is a $\ZZ\oplus\ZZ$ spanned by the homology classes of algebraic curves of bidegree $(1,0)$ and $(0,1)$. We have a notion of a ($\fQ$-)flexible curve in this setting too.

\begin{definition}
    Let $F\subset X$ be a closed, connected and oriented surface. We call $F$\linebreak
    a \textit{bidegree $(a,b)$ flexible curve} if the following conditions hold:\vspace{.2em}
    \begin{itemize}[leftmargin=21pt,itemsep=3pt]
        \item[\textup{(1)}] $\conj(F)=F$.
        \item[\textup{(2)}] $[F]=(a,b)$ in $H_2(X;\ZZ)=\ZZ\oplus\ZZ$.
        \item[\textup{(3)}] $\chi(F)=2-2(a-1)(b-1)$.
        \item[\textup{(4)}] If $\RR F=F\cap\fR$, then for all $x\in\RR F$, $T_xF=T_x\RR F\oplus{\bf i}\cdot T_x\RR F$.\vspace{.3em}
    \end{itemize}
    If, additionally, $F\tcap\fQ$ is $2(a+b)$ points, then $F$ is said to be \textit{$\fQ$-flexible}.
\end{definition}

Note that if both $a$ and $b$ are odd, then $\RR F$ is some number of ovals (null-homologous curves in $\fR$), and some non-zero number of parallel copies of a curve with homology class $(\alpha,\beta)$ in $H_1(\fR;\ZZ)\cong\ZZ\oplus\ZZ$, where $0\leqslant\alpha\leqslant a$ and $0\leqslant\beta\leqslant b$ are both odd and coprime, and $\pi_1(\fR)=H_1(\fR;\ZZ)\cong\ZZ\oplus\ZZ$ is spanned by the real parts of bidegree $(1,0)$ and $(0,1)$ algebraic curves. In the case of an oval $o$, the complement $\fR\smallsetminus o$ has two connected components, one of which is a disk and is called the \textit{interior} of that oval, and we say that $o$ bounds it \textit{from the outside}.

We observe that $\fR$ is a null-homologous torus, and $\fQ$ is a torus with homology class $(2,2)$, both in $H_2(X;\ZZ)$. Therefore $$e(X,\fR)=0\text{ and }e(X,\fQ)=8.$$
Denoting as $p:X\to X/c_1\cong\bS^4$ the 2-fold branched cover, we set $\cR=p(\fR)$\linebreak
and $\cQ=p(\fQ)$. Observe that $\cR$ is a torus and $\cQ$ is a Klein bottle. Finally, \mbox{letting}\linebreak
$\tilde{p}:\Xb\to\bS^4$ be the 2-fold branched cover of $\bS^4$ ramified along $\cQ$ (which exists\linebreak
because $[\cQ]=0\in H_2(\bS^4;\ZZ/2)\cong0$), we set $\fRb=\tilde{p}^{-1}(\cQ)$ and $\fQb=\tilde{p}^{-1}(\cR)$.\linebreak
Consecutive applications of \cref[Lemma]{lem:branched-cover-euler-number} yield $$e(\bS^4,\cR)=0,\quad e(\bS^4,\cQ)=4,\quad e(\Xb,\fRb)=2\quad\text{and}\quad e(\Xb,\fQb)=0.$$ The situation is depicted in \cref[Figure]{fig:hyperboloid}.

\begin{figure}[t]
    \centering
    \begin{tikzcd}
    	\fR & X & \fQ \\
    	\cR & {\bS^4} & \cQ \\
    	\fQb & \Xb & \fRb
    	\arrow["{\tilde{p}}"', from=3-2, to=2-2]
    	\arrow["p"', from=1-2, to=2-2]
    	\arrow["{||}"{description}, draw=none, from=1-1, to=2-1]
    	\arrow["{||}"{description}, draw=none, from=3-3, to=2-3]
    	\arrow[shorten <=1pt, shorten >=1pt, two heads, from=1-3, to=2-3]
    	\arrow[shorten <=2pt, shorten >=2pt, two heads, from=3-1, to=2-1]
    	\arrow["\subset"{description}, draw=none, from=1-1, to=1-2]
    	\arrow["\supset"{description}, draw=none, from=1-3, to=1-2]
    	\arrow["\supset"{description}, draw=none, from=2-3, to=2-2]
    	\arrow["\subset"{description}, draw=none, from=2-1, to=2-2]
    	\arrow["\subset"{description}, draw=none, from=3-1, to=3-2]
    	\arrow["\supset"{description}, draw=none, from=3-3, to=3-2]
    \end{tikzcd}
    \caption{The branched covers in the case of $\CP^1\times\CP^1$ with~its hyperboloid structure, with the same notation conventions as in \cref[Figure]{fig:diagram}.}\label{fig:hyperboloid}
\end{figure}

The topological Riemann--Hurwitz formula gives $\chi(\Xb)=4$, and \cref[Theorem]{thm:H} provides $\sigma(\Xb)=-2$. A similar reasoning as in \cref[Proposition]{prop:torsionHomology} ensures that\linebreak
$H_1(\Xb;\ZZ/2)=0$, and thus that $H_1(\Xb;\ZZ)$ is torsion. In particular, $$b_1(\Xb)=b_3(\Xb)=0\quad\text{and}\quad b_2(\Xb)=-\sigma(\Xb)=2.$$
This suggests that $\Xb$ may be diffeomorphic to $\CPb^2\#\CPb^2$, but this will not be needed.

Consider a $\fQ$-flexible curve $F\subset X$ of bidegree $(a,b)$, where $a$ and $b$ are both odd. In particular: $$\chi(F)=-2ab+2a+2b\quad\text{and}\quad e(X,F)=2ab.$$
Letting $A^+(F)=p(F)$ and $\overline{A}^+(F)=\tilde{p}^{-1}(A^+(F))$, one checks that $$\chi(\overline{A}^+(F))=-2ab+a+b\quad\text{and}\quad e(\overline{A}^+(F))=2ab.$$
In order to understand $\RRb F=\tilde{p}^{-1}(\partial A^+(F))\subset\fQb$, it is necessary to describe the unbranched 2-fold covering $\tilde{p}:\fQb\to\cR$, which is a non-trivial 2-fold cover of the torus (non-triviality can be deduced by the same argument as in the proof of the~next proposition). There are only three such coverings, each given by the subgroups $2\ZZ\oplus\ZZ$, $\ZZ\oplus2\ZZ$ and $G=\{(x,y)\in\ZZ^2\mid x+y\equiv0\mod{2}\}$.

\begin{proposition}
    The covering $\tilde{p}:\fQb\to\cR$ corresponds to the subgroup $G$.
\end{proposition}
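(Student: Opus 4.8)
The plan is to recognise $\tilde{p}:\fQb\to\cR$ as a connected double cover of the torus $\cR$ and to pin down the corresponding index-two subgroup $H\leqslant\pi_1(\cR)=H_1(\cR;\ZZ)\cong\ZZ\oplus\ZZ$ by testing which of the two standard generators lift to loops. Here $H_1(\cR;\ZZ)$ is identified with $\ZZ\oplus\ZZ$ as in the statement, so that $e_1=(1,0)$ and $e_2=(0,1)$ are the classes of the real parts of bidegree $(1,0)$ and $(0,1)$ algebraic curves, and $H=\{(x,y)\mid xe_1+ye_2\text{ lifts to a loop in }\fQb\}$. Since $\tilde{p}:\Xb\to\bS^4$ is branched exactly over $\cQ$ and $\cR\cap\cQ=\varnothing$, the lifting behaviour of a loop $\gamma\subset\cR$ is controlled by how it links $\cQ$: if $\gamma=\partial\Delta$ for an embedded disk $\Delta\subset\bS^4$ whose interior meets $\cQ$ transversally, then $\tilde{p}^{-1}(\Delta)\to\Delta$ is a double cover of a disk branched over the interior points $\Delta\cap\cQ$, so it is connected and the boundary $\gamma$ lifts to one circle when $\#(\Delta\cap\cQ)$ is odd and to two circles when it is even (exactly as in the treatment of the pseudo-line $J$ in \cref[Section]{sec:arnold}, cf.\ \cref[Figure]{fig:branching-ovals-and-J}). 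Thus $xe_1+ye_2\in H$ if and only if this intersection count is even.

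To carry this out I would take $C_1\subset X$ to be a generic real algebraic curve of bidegree $(1,0)$: it is a $2$-sphere, its real part $\RR C_1$ is a single circle representing $e_1$, and by Bézout on the quadric $C_1\tcap\fQ$ consists of $(1,0)\cdot(2,2)=2$ points, which form a complex-conjugate pair because $\RR\fQ=\varnothing$. The surface $C_1^+=p(C_1)\subset\bS^4$ is the quotient of the sphere $C_1$ by the reflection fixing $\RR C_1$, hence an embedded disk with $\partial C_1^+$ the loop $\RR C_1\subset\cR$; since $C_1$ meets the branch locus $\fR$ of $p$ only along $\RR C_1$ while $\cQ$ is disjoint from $\cR$, the interior of $C_1^+$ meets $\cQ$ in $p(C_1\cap\fQ)$, which is a single point. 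So $C_1^+$ witnesses an odd intersection count, and $e_1\notin H$. The same argument applied to a generic real bidegree $(0,1)$ curve $C_2$, for which again $C_2\cdot\fQ=(0,1)\cdot(2,2)=2$, gives $e_2\notin H$.

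Finally, $e_1\notin H$ already shows the cover is non-trivial, so $H$ must be one of the three index-two subgroups $2\ZZ\oplus\ZZ$, $\ZZ\oplus2\ZZ$, $G$; the first contains $e_2$ and the second contains $e_1$, so the only remaining possibility is $H=G$, which is the claim. (As a consistency check, a generic real bidegree $(1,1)$ curve $D$ has $\RR D$ representing $e_1+e_2$ and $D\cdot\fQ=(1,1)\cdot(2,2)=4$, an even count, so $e_1+e_2$ does lift, in agreement with $e_1+e_2\in G$.) The point needing the most care is the identification of $C_i^+$ with an embedded disk and of $\tilde{p}^{-1}(C_i^+)\to C_i^+$ with an honest branched cover of a disk over its interior intersection with $\cQ$; both follow from the fact that $C_i$ meets the $p$-branch locus $\fR$ precisely along $\RR C_i$ and that $\cQ$ lies entirely away from $\cR$, so that the two points of $C_i\cap\fQ$, being swapped by $c_1$, descend to one point in the interior of $C_i^+$.
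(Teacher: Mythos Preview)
Your argument is correct and is essentially the same as the paper's: both take generic real bidegree $(1,0)$ and $(0,1)$ curves, observe that their images under $p$ are disks in $\bS^4$ meeting $\cQ$ transversally in a single point, and deduce that the preimage in $\fQb$ of each generating circle is connected. The only cosmetic difference is that the paper phrases this as a proof by contradiction (assuming the subgroup is $2\ZZ\oplus\ZZ$ or $\ZZ\oplus2\ZZ$ and deriving a contradiction via Riemann--Hurwitz on $\overline{A}^+(C)$), whereas you argue directly that $e_1,e_2\notin H$ and hence $H=G$ is the only remaining index-two subgroup; your consistency check with a bidegree $(1,1)$ curve is a nice addition not present in the paper.
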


\begin{proof}
    Assume it corresponds to the subgroup $2\ZZ\oplus\ZZ$ (the argument is the same with the other). Let $\gamma$ be a curve with homology class $(0,1)$ in $\cR$. Its pre-image is therefore two parallel copies of it. The situation is depicted in \cref[Figure]{fig:covering-torus}.
    
    \begin{figure}[ht]
        \centering\vspace{-.5em}
        \begin{tikzpicture}
            \node[inner sep=0pt] at (0,0) {\includegraphics[width=.7\textwidth]{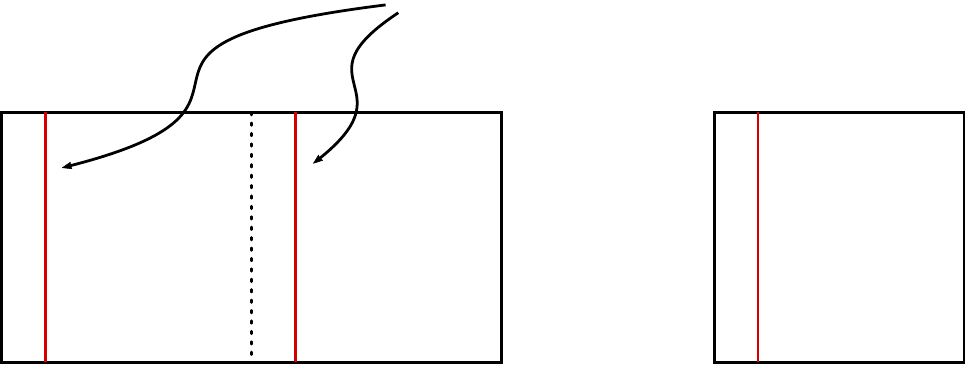}};
            \node at (-.3,1.95) {$\tilde{p}^{-1}(\gamma)$};
            \node at (1.25,-.6) {$\longrightarrow$};
            \node at (1.25,-.35) {$\tilde{p}$};
            \node at (3,-.2) {$\gamma$};
        \end{tikzpicture}
        \caption{The unbranched 2-fold covering of the torus corresponding to the subgroup $2\ZZ\oplus\ZZ$, and its effect on the curve with homology class $(0,1)$.\vspace{-1em}}\label{fig:covering-torus}
    \end{figure}

    Now, let $C$ be a generic bidegree $(0,1)$ algebraic curve, so that $\partial A^+(C)=\gamma$. Then $A^+(C)\tcap\cQ$ is one point, so that the map $\tilde{p}:\overline{A}^+(C)\to A^+(C)$ is a branched covering that restricts to an unbranched covering of the boundary. An application of the Riemann--Hurwitz formula gives $\chi(\overline{A}^+(C))=1$, and $\overline{A}^+(C)$ has at most two boundary components. Therefore, there is no other choice but the same situation as depicted in \cref[Figure]{fig:branching-ovals-and-J}. That is, $\overline{A}^+(C)$ is a disk, and $\tilde{p}^{-1}(\gamma)=\partial\overline{A}^+(C)$ is connected. This is excluded, by assumption.\hspace{-.05em} The same argument with a\hspace{-.05em} \mbox{bidegree $(1,0)$ algebraic} curve works for the subgroup $\ZZ\oplus2\ZZ$.\vspace{-.5em}
\end{proof}

The covering corresponding to the subgroup $G$ is depicted in \cref[Figure]{fig:true-covering-torus}. If a\linebreak
curve $\gamma\subset\cR$ has homology class $(\alpha,\beta)$ with both $\alpha$ and $\beta$ odd (and coprime), then its pre-image is two parallel copies $\tilde{p}^{-1}(\gamma)\subset\fQb$. 

\begin{figure}[b]
    \centering
    \begin{tikzpicture}
        \node[inner sep=0pt] at (0,0) {\includegraphics[width=.56\textwidth]{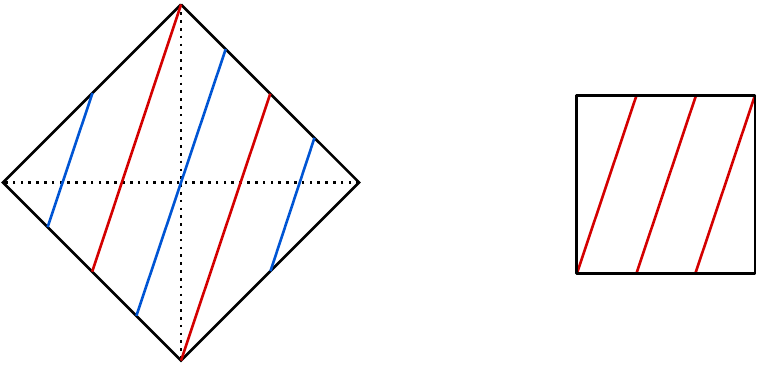}};
        \node at (.8,0) {$\longrightarrow$};
        \node at (.8,.25) {$\tilde{p}$};
    \end{tikzpicture}
    \caption{The unbranched covering $\tilde{p}:\fQb\to\cR$ corresponding to the subgroup $G=\{(x,y)\in\ZZ^2\mid x+y\equiv0\mod{2}\}\subset\ZZ^2$. On the left, the pre-image of the $(3,1)$ curve is two parallel copies of a $(1,-2)$ curve. It is understood that the two tori are represented by the two squares, whose opposite sides are identified.}\label{fig:true-covering-torus}
\end{figure}

Recalling that $\RR F$ is some ovals and some number of parallel copies of an $(\alpha,\beta)$ curve with $\alpha$ and $\beta$ coprime and odd, we have the following immediate facts:
\begin{enumerate}[leftmargin=21pt,itemsep=3pt]
    \item Each copy of the $(\alpha,\beta)$ curve is doubled (indeed, the homotopy class of that curve belongs to the subgroup $G$).
    \item Each oval is doubled.
    \item The preimage respects mutual position of ovals, as in \cref[Proposition]{prop:position-ovals} (that is, an oval inside another lifts to two copies inside the other two copies).
\end{enumerate}

Hence, we see that $\fQb\smallsetminus\RRb F$ has two diffeomorphic subsets $\fQb_\pm$ with the property $\partial\fQb_\pm=\RRb F$ (we provide an example in \cref[Figure]{fig:arnold-hyperboloid}). We therefore have $$\chi(\fQb_+)=\chi(\fQb_-)\quad\text{and}\quad\chi(\fQb)=\chi(\fQb_+)+\chi(\fQb_-),$$
so that $\chi(\fQb_\pm)=0$. The same argument gives $e(\Xb,\fQb_\pm)=0$.

\begin{figure}[t]
    \centering
    \begin{tikzpicture}
        \node[inner sep=0pt] at (0,0) {\includegraphics{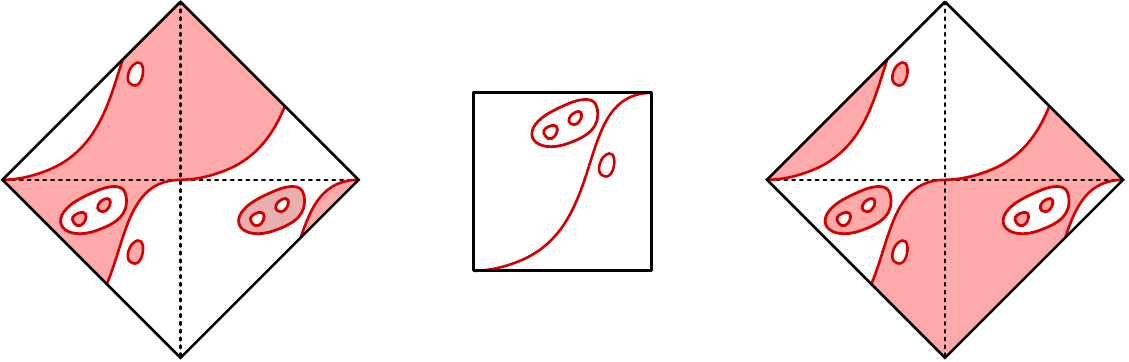}};
        \node at (1.4,0) {$\longleftarrow$};
        \node at (-1.5,0) {$\longrightarrow$};
    \end{tikzpicture}
    \caption{In the middle, a curve with real scheme $\langle(1,1),1\sqcup1\langle2\rangle\rangle$. On the left and on the right, the two possible choices $\fQb_\pm$.}\label{fig:arnold-hyperboloid}
\end{figure}

Therefore, we can define the \textit{Arnold surface} of the curve as $\cA(F)=\overline{A}^+(F)\cup\fQb_+$. Note that $\cA(F)\tcap\fRb$ is $a+b$ points (coming from the $2(a+b)$ points in $F\tcap\fQ$). We consider the immersed surface $\cA(F)\cup\fRb$, and we let $\cX(F)$ be the smoothing of its singularities, as provided by \cref[Lemma]{lem:res-sing}, where we choose the smoothing that satisfies $$e(\Xb,\cX(F))=e(\Xb,\cA(F)\cup\fRb)+2(a+b).$$

\begin{proposition}
    Let $F\subset X$ be a $\fQ$-flexible curve of bidegree $(a,b)$ with both $a$\linebreak
    and $b$ odd. The surface $\cX(F)$ has zero homology in $H_2(\Xb;\ZZ/2)$ and satisfies $$\chi(\cX(F))=-2ab-a-b\quad\text{and}\quad e(\Xb,\cX(F))=2ab+2a+2b+2.$$
\end{proposition}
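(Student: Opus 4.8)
The statement has three parts: the homological triviality of $\cX(F)$ in $H_2(\Xb;\ZZ/2)$, the Euler characteristic computation, and the normal Euler number computation. I would treat them in that order, since the first is a qualitative fact that only uses the geometry of the construction, while the last two are bookkeeping with the formulas already set up in the excerpt.

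For the \emph{homological triviality}, I would mimic the argument given for $\cX(F)\subset\CPb^2$ in Section~\ref{sec:proof}. Here $H_2(\Xb;\ZZ/2)$ has rank $2$, and from the diagram in \cref[Figure]{fig:hyperboloid} the relevant ramification surface for the branched cover is $\fRb=\tilde{p}^{-1}(\cR)$, an embedded torus. Since $\cA(F)\tcap\fRb$ consists of $a+b$ points and $a+b$ is even (both $a$ and $b$ are odd), the mod-$2$ intersection number of $\cA(F)$ with $\fRb$ is zero; likewise $[\fRb]$ self-intersects evenly (by \cref[Lemma]{lem:branched-cover-euler-number} and $e(\Xb,\fRb)=2$, or simply because it is a torus of square $2$). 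The class $[\fQb]$ has square $0$. One then checks that $[\cA(F)]$ pairs trivially mod $2$ with a generating set of $H_2(\Xb;\ZZ/2)$ coming from $\fRb$ and $\fQb$; since the intersection form on $H_2(\Xb;\ZZ/2)$ is nondegenerate, this forces $[\cA(F)]$ to be the unique class with those pairings, which one identifies as $[\fRb]$ mod $2$ (the pairing of $\cA(F)$ with $\fRb$ being the odd count before smoothing — wait, it is $a+b$, even — so in fact $[\cA(F)]$ is already $0$ mod $2$, and after adding $\fRb$ and smoothing we get $\cX(F)\cap\fRb=\varnothing$, hence $[\cX(F)]=0$). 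I would phrase this carefully, paralleling the $\CPb^2$ case: the key point is just that smoothing the singularities of $\cA(F)\cup\fRb$ removes all intersections with the ramification locus $\fRb$, so $\cX(F)$ is disjoint from $\fRb$ and thus null-homologous mod $2$.

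For the \emph{Euler characteristic}, I start from $\chi(\overline{A}^+(F))=-2ab+a+b$ (given). The Arnold surface is $\cA(F)=\overline{A}^+(F)\cup\fQb_+$, glued along $\RRb F$, which is a disjoint union of circles; since $\chi(\fQb_+)=0$ and the circles contribute $0$, we get $\chi(\cA(F))=-2ab+a+b$. Then $\cA(F)\cup\fRb$ is $\cA(F)$ wedged with the torus $\fRb$ ($\chi=0$) at $a+b$ points, so $\chi(\cA(F)\cup\fRb)=-2ab+a+b-(a+b)=-2ab$. Finally \cref[Lemma]{lem:res-sing} says each of the $a+b$ smoothings drops $\chi$ by $1$, giving $\chi(\cX(F))=-2ab-(a+b)=-2ab-a-b$, as claimed.

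For the \emph{normal Euler number}, I again work up the chain. From the given $e(\overline{A}^+(F))=2ab$ and $e(\Xb,\fQb_+)=0$, the gluing relation for relative Euler numbers (as in Section~\ref{sec:proof}, allowing the half-integer/field-of-lines bookkeeping for the non-orientable pseudo-line component, all of which closes up) gives $e(\Xb,\cA(F))=2ab+0=2ab$ — though I should double-check against \cref[Lemma]{lem:conic-lagrangian} whether a correction term like the $-2$ in the $\CPb^2$ case appears; here $\fQb$ has $e=0$ and $\chi(\fQb)=0$, so the analogue of that correction vanishes, consistent with $e(\Xb,\cA(F))=2ab$. Then $e(\Xb,\cA(F)\cup\fRb)=e(\Xb,\cA(F))+e(\Xb,\fRb)+2(a+b)=2ab+2+2(a+b)$, where the $2(a+b)$ accounts for the $a+b$ transverse intersection points (each contributing $+2$ to the self-intersection when the immersed surface is viewed as a single homology class) — I would state this via the standard formula $e(\Xb,\Sigma_1\cup\Sigma_2)=e(\Xb,\Sigma_1)+e(\Xb,\Sigma_2)+2\,\Sigma_1\cdot\Sigma_2$. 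Adding the $+2(a+b)$ from the chosen smoothings (\cref[Lemma]{lem:res-sing}, picking up $+2$ at each of the $a+b$ points) yields $e(\Xb,\cX(F))=2ab+2+2(a+b)+2(a+b)$ — but this overshoots the claimed $2ab+2a+2b+2$. So I expect the resolution is that the $a+b$ crossings are \emph{already} counted once in forming the immersed union, and the smoothing replaces that contribution rather than adding to it; concretely, $e(\Xb,\cA(F)\cup\fRb)=2ab+2$ (no cross term, because $\cA(F)$ and $\fRb$ meet the ramification locus differently and the relevant normal data cancels — exactly as the $\CPb^2$ computation gave $e(\CPb^2,\cA(F)\cup\RPb^2)$ with no doubled cross term), and then the $a+b$ smoothings add $2(a+b)$, giving $e(\Xb,\cX(F))=2ab+2+2(a+b)=2ab+2a+2b+2$. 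I would pin down the correct convention by matching it against the odd-degree $\CP^2$ computation in Section~\ref{sec:proof}, where $e(\CPb^2,\cX(F))=m^2+2m-1$ came out as $e(\CPb^2,\cA(F)\cup\RPb^2)+2m$ with $e(\CPb^2,\cA(F)\cup\RPb^2)=m^2-1$; the structurally identical computation here gives the stated formula.

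\textbf{Main obstacle.} The delicate point is the normal Euler number bookkeeping: keeping straight whether the $a+b$ transverse intersections contribute a cross term to $e$ of the immersed union, and how the half-integer (field-of-lines) conventions interact when the Arnold surface is non-orientable. Everything else is a direct transcription of the $\CP^2$ arguments with the numerology changed; the safest route is to carry every quantity through in exact parallel to Section~\ref{sec:proof} and trust the bookkeeping conventions established there.
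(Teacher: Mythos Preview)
Your computations of $\chi(\cX(F))$ and $e(\Xb,\cX(F))$ are fine (once you settle on the convention that the immersed union carries no cross term in $e$, exactly as in the $\CPb^2$ case). The homological triviality argument, however, has a genuine gap.

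You try to transplant the rank-one argument from $\CPb^2$: there, $H_2(\CPb^2;\ZZ/2)=\{0,[\RPb^2]\}$, and the \emph{odd} intersection number $\cA(F)\cdot\RPb^2=m$ immediately forces $[\cA(F)]=[\RPb^2]$. Here $H_2(\Xb;\ZZ/2)$ has rank two, and the intersection number $\cA(F)\cdot\fRb=a+b$ is \emph{even}, so it gives no information at all. Your proposed generating set $\{[\fRb],[\fQb]\}$ does not span: the intersection form of $\Xb$ is $-I_2$ (this already requires Donaldson's theorem, which you do not invoke), and in the diagonal basis the only classes with even self-intersection are $(0,0)$ and $(1,1)$; both $[\fRb]$ and $[\fQb]$ lie in this one-dimensional set, so pairing against them cannot separate classes. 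In particular, two copies of $(1,1)$ intersect in $-2\equiv 0\pmod 2$, so vanishing mod-$2$ intersection with $\fRb$ is consistent with $[\cA(F)]=(1,1)$ just as well as with $[\cA(F)]=0$. Your parenthetical conclusion ``$[\cA(F)]$ is already $0$ mod $2$'' is in fact false, and if it were true you would get $[\cX(F)]=[\fRb]\neq 0$, the opposite of what you want.

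The paper's argument is genuinely different and uses more input. After identifying the form as $-I_2$ via Donaldson, evenness of $e(\Xb,\cA(F))$ and $e(\Xb,\fRb)$ confines both classes to $\{(0,0),(1,1)\}$. To rule out $(0,0)$, the paper applies the Yamada--Whitney congruence (\cref[Theorem]{thm:YamadaWhitney}): $e+2\chi\equiv q([\,\cdot\,])\pmod 4$, which gives $q([\cA(F)])\equiv 2ab\equiv 2$ and $q([\fRb])\equiv 2$, both nonzero mod $4$. Hence $[\cA(F)]=[\fRb]=(1,1)$ and $[\cX(F)]=[\cA(F)]+[\fRb]=0$. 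You need both Donaldson's theorem and the quadratic refinement; mod-$2$ intersection numbers alone are not enough in rank $\geqslant 2$.
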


\begin{proof}
    Computing $\chi(\cX(F))$ and $e(\Xb,\cX(F))$ is straight-forward. To prove that $[\cX(F)]=0\in H_2(\Xb;\ZZ/2)$, it suffices to show that $\cA(F)$ and $\fRb$ are homologous mod 2. Note that by the previous computations, $b_2(\Xb)=-\sigma(\Xb)=2$, so that $\Xb$ is a negative definite smooth 4-manifold. By virtue of Donaldson's theorem, this means that the intersection form of $\Xb$ is, up to a change of basis, that of $\CPb^2\#\CPb^2$. We consider a basis of $H_2(\Xb,\ZZ/2)\cong\ZZ/2\oplus\ZZ/2$ that diagonalizes this intersection form, and we will show that $\cA(F)$ and $\fRb$ both realize the homology class $(1,1)$ in $H_2(\Xb;\ZZ/2)$.
    
    Because $e(\Xb,\cA(F))$ and $e(\Xb,\fRb)$ are both even, this rules out the two classes $(1,0)$ and $(0,1)$. As such, it suffices to show that $\cA(F)$ and $\fRb$ are both not null-homologous in $H_2(\Xb;\ZZ/2)$ (if $\cA(F)$ was null-homologous, we could directly take the 2-fold covering of $\Xb$ ramified along $\cA(F)$, without adding $\fRb$).

    By \cref[Theorem]{thm:YamadaWhitney}, we have the congruences
    $$e(\Xb,\cA(F))+2\chi(\cA(F))\equiv q([\cA(F)])\mod{4},$$
    $$\hspace{1.5em}e(\Xb,\fRb)+2\chi(\fRb)\equiv q([\fRb])\mod{4},$$
    for some quadratic function $q:H_2(\Xb;\ZZ/2)\to\ZZ/4$. From the previous \mbox{computations,} this yields $$2ab\equiv q([\cA(F)])\mod{4}\quad\text{and}\quad2\equiv q([\fRb])\mod{4}.$$
    Since $a$ and $b$ are both odd, this means that $2ab\not\equiv0\mod{4}$. As such, we cannot have $q([\cA(F)])=0$ and $q([\fRb])=0$, thus implying $[\cA(F)]\neq 0$ and $[\fRb]\neq0$.
\end{proof}

Let $Y$ denote the 2-fold covering of $\Xb$ ramified along $\cX(F)$. By \cref[Proposition]{prop:torsionHomology}, $H_1(Y)$ is torsion, and computations of $\chi(Y)=2ab+a+b+8$ and $\sigma(Y)=\linebreak-ab-a-b-5$ yield $$b_2^+(Y)=\frac{ab+1}{2}.$$
This implies the following result.

\begin{theorem}\label{thm:hyperboloid}
    Let $F$ be a $\fQ$-flexible curve in the hyperboloid with bidegree $(a,b)$ where both $a$ and $b$ are odd. Let $\ell^\pm$ and $\ell^0$ denote the number of ovals of the curve that bound from the outside a subset with positive, negative or zero Euler characteristic, respectively. Then: $$\ell^-+\ell^0\leqslant\frac{ab+1}{2}.$$
\end{theorem}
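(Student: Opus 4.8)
The plan is to transpose the proof of \cref[Theorem]{thm:main} to the hyperboloid, the only genuinely new feature being that the non-contractible part of $\RR F$ is now a family of $k\geqslant1$ parallel copies $\gamma_1,\dots,\gamma_k$ of the $(\alpha,\beta)$-curve rather than a single pseudo-line, so the role played there by the region $\RP^2\smallsetminus J$ is now played by the $k$ annular regions $R_1,\dots,R_k$ into which the $\gamma_j$ cut $\fR$. I would therefore build one membrane per oval and one per region, and then run the same rank argument against $b_2^+(Y)=\tfrac{ab+1}{2}$.

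For an oval $o\subset\RR F$, let $C_-(o)\subset\fQb_-$ be the lift to $\fQb_-$ of the component of $\cR\smallsetminus p(\RR F)$ that $o$ bounds from outside, and set $\tilde C(o)=\Theta^{-1}(C_-(o))\subset Y$. As in \cref[Section]{sec:proof}, $\tilde C(o)$ is an orientable surface and, by \cref[Lemma]{lem:branched-cover-euler-number} together with the analogue of \cref[Lemma]{lem:conic-lagrangian} for submanifolds of $\fQb$ (valid because $\fR$ is Lagrangian in $X$ and $\tilde p$ is unbranched near $\cR$), $e(Y,\tilde C(o))=2e(\Xb,C_-(o))=-4\chi(o)$. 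For a region $R_j$, its core is homotopic to a curve whose class lies in $G$, so $\tilde p^{-1}(R_j)$ consists of two disjoint copies in $\fQb$, one in each of $\fQb_\pm$; taking the one $C_-(R_j)\subset\fQb_-$ and putting $\tilde D(R_j)=\Theta^{-1}(C_-(R_j))$, the same reasoning yields an orientable surface with $e(Y,\tilde D(R_j))=2e(\Xb,C_-(R_j))=-4\chi(R_j)=4b_j\geqslant0$, where $b_j\geqslant0$ is the number of exterior ovals contained in $R_j$. Disjointness of the $C_-(o_i)$ and the $C_-(R_j)$ inside $\fQb_-$ shows, exactly as in \cref[Section]{sec:proof}, that the surfaces $\{\tilde C(o_i)\}_{i=1}^{\ell}\cup\{\tilde D(R_j)\}_{j=1}^{k}$ are pairwise $Q_Y$-orthogonal, and their orientability lets $Q_Y$ compute their self-intersections.

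Next I would redo the homological bookkeeping. The argument of \cref[Proposition]{prop:torsionHomology} still gives $H_1(Y;\ZZ/2)=0$, hence $H_3(Y;\ZZ/2)=0$, so \cref[Lemma]{lem:VZ-Smith} applies to $\Theta$ with $h=2$ and $\tilde\alpha_2\colon H_2(\Xb,\cX(F);\ZZ/2)\to H_2(Y;\ZZ/2)$ is injective, carrying $[C_-(o_i)]$ to $[\tilde C(o_i)]$ and $[C_-(R_j)]$ to $[\tilde D(R_j)]$. Since $\fQb_-=\bigcup_iC_-(o_i)\cup\bigcup_jC_-(R_j)$ along $\RRb F\subset\cX(F)$, and since $[\fQb]=0$ in $H_2(\Xb;\ZZ/2)$ (because $\fQb$ is an orientable surface with $e(\Xb,\fQb)=0$, $\Xb$ is negative definite, and $H_1(\Xb;\ZZ/2)=0$ rules out $2$-torsion in $H_2$), one obtains the relation $\sum_i[\tilde C(o_i)]+\sum_j[\tilde D(R_j)]=0$ in $H_2(Y;\ZZ/2)$; the analogue of \cref[Corollary]{cor:rank}, proved by the same Smith-sequence argument as in \cite[\S2.4]{VZ92}, then shows that this family has rank at least $\ell+k-1$.

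Finally, let $\mathcal P$ consist of the $\ell^0+\ell^-$ membranes $\tilde C(o)$ of non-empty ovals together with all $k$ membranes $\tilde D(R_j)$: these are pairwise orthogonal with non-negative self-intersections, so $\mathrm{span}(\mathcal P)$ is a subspace on which $Q_Y$ is positive semi-definite, whence $\mathrm{rank}(\mathcal P)\leqslant b_2^+(Y)=\tfrac{ab+1}{2}$. Passing from the full family to $\mathcal P$ deletes only the $\ell^+$ membranes of empty ovals, so $\mathrm{rank}(\mathcal P)\geqslant(\ell+k-1)-\ell^+=\ell^0+\ell^-+k-1$. Combining, $\ell^0+\ell^-+k-1\leqslant\tfrac{ab+1}{2}$, and since $k\geqslant1$ this gives $\ell^0+\ell^-\leqslant\tfrac{ab+1}{2}$. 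The step I expect to be the main obstacle is precisely the rank estimate: one must verify that the displayed relation is essentially the only one among these $\ell+k$ classes, and the torus topology of $\fQb$ (as opposed to the $2$-sphere $\Qb$ of \cref[Section]{sec:proof}) makes this part of the Smith-sequence bookkeeping somewhat more delicate.
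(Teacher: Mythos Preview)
Your outline is sound and closely parallels the paper's argument, but there is one organizational difference worth pointing out. The paper does \emph{not} introduce a membrane for each of the $k$ annular regions $R_1,\dots,R_k$; it uses only a single one. Assuming $\RR F$ has at least one oval (otherwise the statement is vacuous), the paper selects one connected component $D_-\subset\fQb_-$ whose boundary contains both a lifted $(\alpha,\beta)$-curve \emph{and} at least one lifted oval, sets $\tilde D=\Theta^{-1}(D_-)$, and then applies \cref[Lemma]{lem:VZ-Smith} to the $(\ell+1)$-element family $\{\tilde C(o_i)\}_{i=1}^{\ell}\cup\{\tilde D\}$. This makes the rank bookkeeping a verbatim copy of \cref[Corollary]{cor:rank} and the proof of \cref[Theorem]{thm:main}: one needs only rank $\geqslant\ell$ for an $(\ell+1)$-element family, and the positive subfamily $\mathcal P$ has exactly $\ell^0+\ell^-+1$ elements.

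Your route with all $k$ regional membranes is more ambitious and, if it goes through, yields the sharper bound $\ell^0+\ell^-\leqslant\tfrac{ab+1}{2}-(k-1)$, which the paper's method does not see. But the step you flag as the main obstacle really is one: establishing rank $\geqslant\ell+k-1$ requires showing that the displayed relation is essentially the only one, and when some $R_j$ contain no ovals the corresponding $C_-(R_j)$ is a bare annulus on the torus $\fQb$ whose independence in $H_2(\Xb,\cX(F);\ZZ/2)$ is not immediate from the Smith sequence alone. The paper sidesteps this entirely by keeping a single regional membrane. One further caution: your deduction ``positive semi-definite on $\mathrm{span}(\mathcal P)$, whence $\mathrm{rank}(\mathcal P)\leqslant b_2^+(Y)$'' is not valid as stated, since a semidefinite subspace can exceed $b_2^+$ through its isotropic part; the paper phrases this instead as $\mathcal P$ spanning a subspace of $H_2^+(Y)$, and in your version---where several $\tilde D(R_j)$ may have zero self-intersection---the same care is needed.
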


\begin{proof}
    We denote as $\Theta:Y\to\Xb$ the double branched cover of $(\Xb,\cX(F))$. For any oval $o\subset\RR F$, $\fR\smallsetminus\RR F$ has exactly two path-connected components that have $o$\linebreak
    as a part of their boundary. One is a punctured disc, and the other is a punctured torus. We denote as $C(o)\subset\cR$ the image under $p:X\to\bS^4$ of the punctured disc component, and as $C_\pm(o)\subset\fQb_\pm$ the pre-images under $\tilde{p}:\Xb\to\bS^4$ of $C(o)$. We set $\tilde{C}(o)=\Theta^{-1}(C_-(o))$. For an analogue of $\tilde{D}(J)$, there is a subtlety. Indeed, in $\RR F$, there may be several parallel copies of an $(\alpha,\beta)$-curve in $H_1(\fR;\ZZ)$, where $\alpha$ and $\beta$ are both odd and coprime. Each of these curves will lift in $\fQb$ to two copies.\linebreak
    If $\RR F$ contains ovals, then it is possible to choose one connected component $D_-$ of $\fQb\smallsetminus\RRb F$ that has one of those curves as a boundary component, and at least one oval as another boundary component, and which is included in $\fQb_-$. Define $\tilde{D}=\Theta^{-1}(D_-)$. By computations analogous to the $\CP^2$ case, we have the following:
    \begin{enumerate}[leftmargin=21pt,itemsep=3pt]
        \item If $o\subset\RR F$ is an oval, then $e(Y,\tilde{C}(o))=-4\chi(o)$, where $\chi(o)=\chi(C_-(o))$. In particular, $e(Y,\tilde{C}(o))\leqslant0$ if and only if $o$ is a non-empty oval.
        \item $Q_Y(\tilde{D},\tilde{D})\leqslant0$.
        \item If $o\neq o'$ are two distinct ovals, then $Q_Y(\tilde{C}(o),\tilde{C}(o'))=0$ and $Q_Y(\tilde{C}(o),\tilde{D})=0$.
    \end{enumerate}
    We can now apply \cref[Lemma]{lem:VZ-Smith} to the family composed of the collection of the $\tilde{C}(o)$ and of $\tilde{D}$.
\end{proof}
    \subsection*{Curves on an ellipsoid}

We now consider the other anti-holomorphic involution $c_2:([x_0:x_1],|y_0:y_1])\mapsto([\bar{y_0}:\bar{y_1}],[\bar{x_0}:\bar{x_1}])$ on $X=\CP^1\times\CP^1$. This time, we have $$\fR=\mathrm{Fix}(c_2)=\{(x,\bar{x})\mid x\in\CP^1\}\cong\bS^2,$$
and $X/c_2\cong\CPb^2$. Algebraic curves in $(X,c_2)$ necessarily have a bidegree of the form $(m,m)$ for some $m\geqslant1$. Consider a purely imaginary bidegree $(2,2)$ algebraic curve $\fQ$, and define flexible curves and $\fQ$-flexible curves as before. Note that we still keep the same basis for $H_2(X;\ZZ)$ as in the case of the hyperboloid.

\begin{theorem}\label{thm:ellipsoid}
    Let $F$ be a bidegree $(m,m)$ $\fQ$-flexible curve on the ellipsoid, with $m$ odd. Let $\ell^\pm$ and $\ell^0$ denote the number of connected components of $\fR\smallsetminus\RR F$ with positive, negative or zero Euler characteristic. Then $$\ell^0+\ell^-\leqslant\frac{m^2+1}{2}.$$
\end{theorem}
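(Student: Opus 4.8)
The plan is to run the exact same machine as in the proof of \cref[Theorem]{thm:hyperboloid}, only with the homological bookkeeping adjusted to the involution $c_2$. First I would set up the branched covers: write $p:X\to X/c_2\cong\CPb^2$ for the $2$-fold cover branched over $\fR\cong\bS^2$, and $\tilde p$ for the $2$-fold cover branched over $\cQ=p(\fQ)$. Computing the relevant normal Euler numbers via \cref[Lemma]{lem:branched-cover-euler-number} (starting from $e(X,\fR)=2$, since $\fR$ is a sphere of self-intersection $2$, and $e(X,\fQ)=8$) gives the numbers needed downstream; the topological Riemann--Hurwitz formula and \cref[Theorem]{thm:H} then pin down $\chi$ and $\sigma$ of the relevant intermediate $4$-manifold, and a Smith-sequence argument as in \cref[Proposition]{prop:torsionHomology} kills $H_1$ mod $2$.

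Next I would build the Arnold surface. Since $\RR F=F\cap\fR$ sits inside a $2$-sphere, \emph{every} component of $\RR F$ is an oval (null-homologous), so there is no pseudo-line $J$ to worry about — this is the main structural simplification over the hyperboloid case, and it means the ``$\tilde D(J)$'' surface is replaced by a surface $\tilde D$ built from the component of $\fR\smallsetminus\RR F$ that is the ``outermost'' region, exactly as one does for even-degree curves in $\CP^2$. I would check that $\tilde p$ doubles each oval (the relevant $2$-fold cover of the sphere $\cR=p(\fR)$ restricted over an oval is trivial or non-trivial as needed — here one must verify the covering behavior carefully, since $\fR$ is a sphere rather than a torus or $\RP^2$), form $\overline A^+(F)=\tilde p^{-1}(F/c_2)$, choose one of the two symmetric halves $\fQb_+$ of $\fQb\smallsetminus\RRb F$, and set $\cA(F)=\overline A^+(F)\cup\fQb_+$. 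Then $\cA(F)$ meets $\fRb=\tilde p^{-1}(\cR)$ transversely in $(m+m)/?$ points coming from the $2(m+m)=4m$ points of $F\tcap\fQ$ — I would compute this intersection number, resolve the double points by \cref[Lemma]{lem:res-sing} choosing the $+2$ resolution throughout to get $\cX(F)$, and use \cref[Theorem]{thm:YamadaWhitney} (as in the hyperboloid proof) to confirm $[\cA(F)]=[\fRb]\ne 0$ in $H_2(\CPb^2;\ZZ/2)$ so that $[\cX(F)]=0$ and the $2$-fold branched cover $Y\to\CPb^2$ along $\cX(F)$ exists.

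Finally I would compute $b_2^+(Y)$ from $\chi(Y)=2\chi(\CPb^2)-\chi(\cX(F))$ and $\sigma(Y)=2\sigma(\CPb^2)-\tfrac12 e(\CPb^2,\cX(F))$ (using $\sigma(\CPb^2)=-1$), which should land on $b_2^+(Y)=\tfrac{m^2+1}{2}$. For each oval $o$ I take $C_-(o)\subset\fQb_-$ to be the copy of the disk-component of $\fR\smallsetminus o$, set $\tilde C(o)=\Theta^{-1}(C_-(o))$, and similarly define $\tilde D$ from the outermost region; by \cref[Lemma]{lem:branched-cover-euler-number} and \cref[Lemma]{lem:conic-lagrangian}-type computations one gets $Q_Y(\tilde C(o),\tilde C(o))=-4\chi(o)$, hence $\le 0$ exactly when $o$ is non-empty, the cross terms vanish, and $Q_Y(\tilde D,\tilde D)\le 0$. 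Applying \cref[Lemma]{lem:VZ-Smith} (with $\nu=\Theta$, $h=2$) shows the family $\{\tilde C(o)\}\cup\{\tilde D\}$ has rank at least $\#\{\text{ovals}\}$, so the maximal sub-collection spanning a positive subspace of $H_2(Y)$ — which has size $\ell^0+\ell^-+1$ — has rank $\ge\ell^0+\ell^-$, giving $\ell^0+\ell^-\le b_2^+(Y)=\tfrac{m^2+1}{2}$. The main obstacle I expect is getting the intersection-number and Euler-number bookkeeping right for the $c_2$-involution: the branch locus $\fR$ being a sphere of positive self-intersection changes several of the inputs relative to both the $\CP^2$ and hyperboloid settings, and one has to be careful that the covering $\tilde p:\fQb\to\cR$ really does double the ovals in the way the argument requires, rather than merely assume it by analogy.
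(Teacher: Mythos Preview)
Your outline tracks the paper's strategy in spirit, but there are three genuine gaps that would make the argument fail as written.

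First, the intermediate manifold is not $\CPb^2$. You correctly set $X/c_2\cong\CPb^2$, but the domain $\Xb$ of $\tilde p$ is the double cover of $\CPb^2$ branched over the Klein bottle $\cQ$, and this has $\chi(\Xb)=6$, $\sigma(\Xb)=-4$, $b_2(\Xb)=4$ (think $4\CPb^2$). Your final computation $\chi(Y)=2\chi(\CPb^2)-\chi(\cX(F))$, $\sigma(Y)=2\sigma(\CPb^2)-\tfrac12 e(\CPb^2,\cX(F))$ with $\sigma=-1$ is therefore using the wrong base; the correct inputs are $\chi(\Xb)=6$ and $\sigma(\Xb)=-4$, and $\cX(F)$ lives in $\Xb$, not in $\CPb^2$. (Relatedly, $e(X,\fR)=-2$, not $+2$: $[\fR]=(\pm1,\mp1)$ in $H_2(\CP^1\times\CP^1)$.)

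Second, the cover $\tilde p:\fQb\to\cR$ is the \emph{trivial} double cover of a sphere, so $\fQb=Q_1\sqcup Q_2$ is two disjoint $2$-spheres. There are no ``two symmetric halves'' cut out by $\RRb F$ inside a connected $\fQb$; instead one must build $\fQb_\pm$ by taking, on each sphere $Q_i$, one of the two regions $\tilde p^{-1}(R_1)$, $\tilde p^{-1}(R_2)$ (where $R_1,R_2\subset\cR$ are the two pieces with $\partial R_i=p(\RR F)$), chosen oppositely on $Q_1$ and $Q_2$ via the deck involution $\tau$. This is what guarantees $\chi(\fQb_+)=\tfrac12\chi(\fQb)$ and $e(\fQb_+)=\tfrac12 e(\fQb)$, and it is also what forces the membranes $C_-(U)$, $C_-(V)$ for adjacent components $U,V$ of $\fR\smallsetminus\RR F$ to land in \emph{different} $Q_i$, so that they are still disjoint even though $U$ and $V$ share boundary. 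Your ``ovals plus outermost region plus $\tilde D$'' picture imported from the hyperboloid does not match this structure; the correct family is simply $\{\tilde C(U)\}$ indexed by all components $U$ of $\fR\smallsetminus\RR F$, as in the theorem statement.

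Third, and most seriously, Yamada--Whitney alone does not force $[\cA(F)]=[\fRb]$ here. Since $H_2(\Xb;\ZZ/2)$ has rank four (intersection form $-I_4$), the congruence only cuts the possibilities down to the six classes with exactly two nonzero coordinates; it rules out $0$ and $(1,1,1,1)$ but leaves, e.g., $[\cA(F)]=(1,1,0,0)$ versus $(0,0,1,1)$ undecided. One must produce an explicit basis $(S_1,S_2,\Sigma_1,\Sigma_2)$ of $H_2(\Xb;\ZZ/2)$ (lifts of a complex line disjoint from $\cQ$, together with lifts of half-lines sitting on the two $\RP^2$'s whose desingularization is $\cQ$), identify $[\fRb]=[S_1]+[S_2]$ via a characteristic-surface argument, and then check by direct intersection counts that $\cA(F)\cdot\Sigma_i\equiv0\bmod 2$. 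Without this step you cannot conclude that $\cX(F)$ is null-homologous and hence that the branched cover $Y$ exists.
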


We have $$\vspace{.2em}e(X,\fR)=-2\text{ and }e(X,\fQ)=8,\vspace{.2em}$$
because $[\fR]=(\pm1,\mp1)\in H_2(X;\ZZ)$ (depending on a choice of orientation) and $[\fQ]=(2,2)$. Denoting the branched cover as $p:X\to\CPb^2$, we see that, letting $\cR=p(\fR)$ and $\cQ=p(\fQ)$, $$e(\CPb^2,\cR)=-4\quad\text{and}\quad e(\CPb^2,\cQ)=4.$$
In particular, $\cQ$ is a null-homologous Klein bottle in $H_2(\CPb^2;\ZZ/2)$, because it has even normal Euler number. This means that there is a well-defined 2-fold branched cover $\tilde{p}:\Xb\to\CPb^2$ ramified along $\cQ$. We let $\fRb=\tilde{p}^{-1}(\cQ)$ and $\fQb=\tilde{p}^{-1}(\cR)$, so that $$e(\Xb,\fRb)=2\text{ and }e(\Xb,\fQb)=-8.$$
A direct computation provides $$\chi(\Xb)=6\quad\text{and}\quad\sigma(\Xb)=-4,$$
with $H_1(\Xb)$ torsion. This is evidence to think that $\Xb\cong4\CPb^2$. What will be\linebreak
useful is knowing that $\Xb$ is negative definite, and so has intersection form $-I_4$ by Donaldson's theorem.

This time, the restriction $\tilde{p}:\fQb\to\cR$ is a two-fold covering of the 2-sphere, and is necessarily trivial. We set $\fQb=Q_1\sqcup Q_2$. Let $\tau:\Xb\to\Xb$ be the involution of\linebreak
$\Xb$ spanning $\Aut(\tilde{p})$. Denote as $R_1$ and $R_2$ the two subsets of $\cR\smallsetminus p(\RR F)$ with\linebreak
$\partial R_i=p(\RR F)$, and define $$\fQb_+=Q_1\cap\tilde{p}^{-1}(R_1)\sqcup Q_2\cap\tau(\tilde{p}^{-1}(R_1)),$$
$$\fQb_-=Q_2\cap\tilde{p}^{-1}(R_1)\sqcup Q_1\cap\tau(\tilde{p}^{-1}(R_1)).$$
We refer to \cref[Figure]{fig:two-spheres} for a representation. Of course, this definition depends on the choices of the labeling $Q_i$ of the two components of $\fQb$, as well as the choice of\linebreak
the labeling of the $R_i$. But ultimately, the inequality we obtain will not depend on these choices.

\begin{figure}[t]
    \centering
    \begin{tikzpicture}
        \node[inner sep=0pt] at (0,0) {\includegraphics{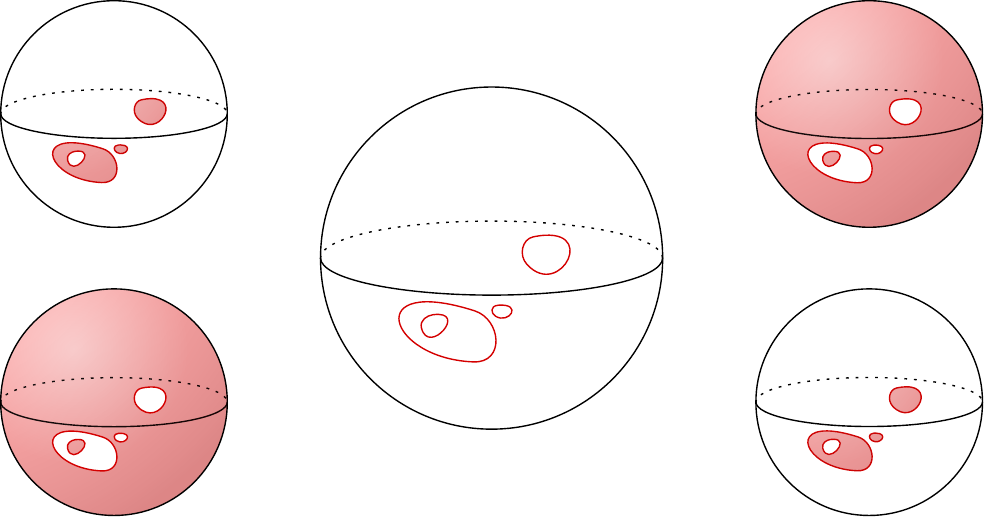}};
        \node at (2.5,0) {$\longleftarrow$};
        \node[above] at (2.5,0) {$\tilde{p}$};
        \node at (-2.4,0) {$\longrightarrow$};
        \node[above] at (-2.4,0) {$\tilde{p}$};
        \node at (-3.9,-3) {$\fQb_+$};
        \node at (3.9,-3) {$\fQb_-$};
        \node at (0,-2.2) {$p(\RR F)\subset\cR$};
    \end{tikzpicture}
    \caption{The two possible subsets $\fQb_\pm$, shaded. It is understood that the two spheres in the first row are $Q_1$, and the two in the second are $Q_2$.}
    \label{fig:two-spheres}
\end{figure}

This allows for a definition of $\cA(F)$ such that $e(\fQb_+)=\frac{1}{2}e(\fQb)$ and $\chi(\fQb_+)=\frac{1}{2}\chi(\fQb)$. We obtain $$\chi(\cA(F))=-2m^2+2m+2\quad\text{and}\quad e(\cA(F))=2m^2-4.$$

Another key difference from the cases of $\CP^2$ and of the hyperboloid is that the\linebreak
second homology $H_2(\Xb;\ZZ/2)$ now has rank four (the intersection form of $\Xb$ is $-I_4$). To show that $\cA(F)$ and $\fRb$ are homologous mod 2 and not null-homologous, we need to eliminate more cases. We consider a basis of $H_2(\Xb;\ZZ)$ that diagonalizes the intersection form of $\Xb$. It also descends to a basis of $H_2(\Xb;\ZZ/2)$. If $(a,b,c,d)\in H_2(\Xb;\ZZ/2)$ denotes the homology class of $\cA(F)$ or $\fRb$, with $a,b,c,d\in\{0,1\}$, then the fact that $e(\Xb,\cA(F))$ and $e(\Xb,\fRb)$ are even implies that $a+b+c+d\equiv0\mod{2}$. There are 8 remaining cases: $(0,0,0,0)$, $(1,1,1,1)$, and the six cases of the type $(1,0,1,0)$ with two non-zero coefficients. \cref[Theorem]{thm:YamadaWhitney} rules out the zero homology class, as well as the $(1,1,1,1)$ one. Let $\cX(F)$ be $\cA(F)\cup\fRb$ with all $2m$ singularities removed accordingly to \cref[Lemma]{lem:res-sing}. This gives $$\chi(\cX(F))=-2m^2-2m+2\quad\text{and}\quad e(\Xb,\cX(F))=2m^2+4m-2.$$
One last application of \cref[Theorem]{thm:YamadaWhitney} provides $q([\cX(F)])\equiv0\mod{4}$. In particular, if, without loss of generality, we have $[\fRb]=(1,1,0,0)$, then this means that there are only two choices: $$[\cA(F)]=(1,1,0,0)\text{ or }(0,0,1,1).$$
That is, either $\cX(F)$ is null-homologous, in which case $[\fRb]=[\cA(F)]$, or it is a characteristic surface if $[\fRb]\neq[\cA(F)]$. Assuming that $\cX(F)$ is characteristic, the Guillou--Marin congruence (\cref[Theorem]{thm:GM}) applies and gives $$\beta(\Xb,\cX(F))\equiv-(m+1)^2\equiv0\text{ or }4\mod{8},$$
by inspection of the squares of odd integers mod $8$. Because the surface $\cX(F)$ has high genus, this method will a priori not yield any contradiction.

\begin{figure}[t]
        \centering
        \begin{tikzpicture}
            \node[inner sep=0pt] at (0,0) {\includegraphics[width=.63\textwidth]{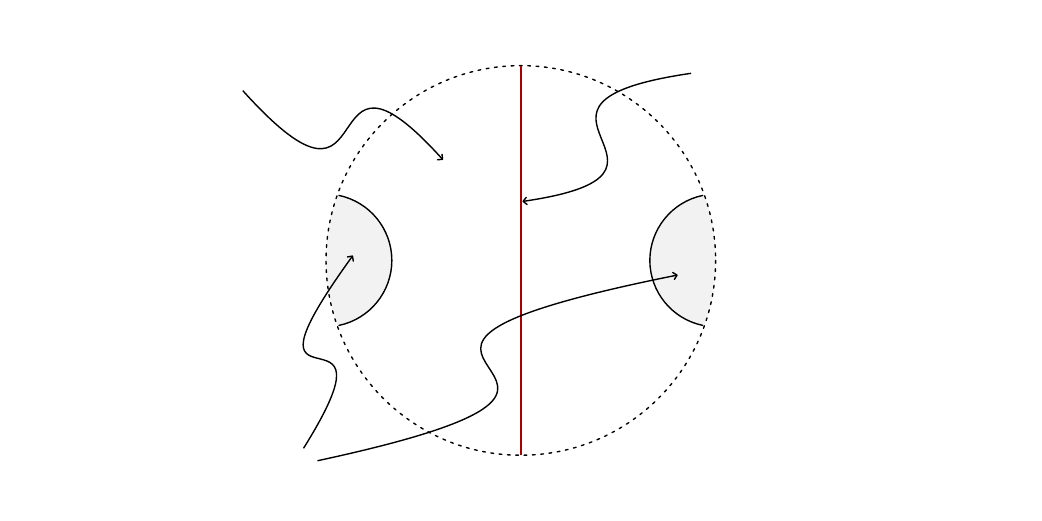}};
            \node at (1.7,1.6) {$\ell_i$};
            \node at (-2.8,1.6) {$R_i\smallsetminus D_i$};
            \node at (-1.85,-1.75) {$D_i$};
        \end{tikzpicture}
        \caption{The core of a Möbius strip can be seen as a real line in the associated real projective plane.\vspace{-1em}}
        \label{fig:coreMobius}
    \end{figure}

\vspace{-.2em}\begin{proposition}
    The surface $\cA(F)$ is homologous to $\fRb$ mod $2$.\vspace{-.2em}
\end{proposition}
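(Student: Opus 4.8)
The reduction is essentially complete: since $e(\Xb,\cA(F))$ and $e(\Xb,\fRb)$ are both even, both $[\cA(F)]$ and $[\fRb]$ have even weight in a basis of $H_2(\Xb;\ZZ/2)\cong(\ZZ/2)^4$ diagonalising the (negative definite, by Donaldson) intersection form; \cref[Theorem]{thm:YamadaWhitney} then discards the weight-$0$ and weight-$4$ classes for each of them, and the congruence $q([\cX(F)])\equiv0\pmod 4$ pins $[\cA(F)]+[\fRb]$ to the zero class or to the characteristic class $(1,1,1,1)$. After normalising $[\fRb]=(1,1,0,0)$, the two surviving possibilities are $[\cA(F)]=[\fRb]$ — what we want — and $[\cA(F)]=(0,0,1,1)$, i.e.\ $[\cX(F)]$ being the unique mod-$2$ characteristic class of $\Xb$. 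So the whole proposition amounts to \emph{ruling out that $\cX(F)$ is characteristic}, and \cref[Theorem]{thm:GM} does not do this on its own, since for large $m$ the genus of $\cX(F)$ leaves the Brown invariant $\beta(\Xb,\cX(F))$ free.

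The plan is to detect $[\cX(F)]$ geometrically. In a negative definite form the characteristic class pairs evenly with every class of even self-intersection and oddly with every class of odd self-intersection, so it is enough to exhibit one closed surface $Z\subset\Xb$ for which $Z\cdot Z$ and $[\cX(F)]\cdot Z=\bigl(\cA(F)\cup\fRb\bigr)\cdot Z$ have opposite parities. Since surfaces obtained directly from the two branched covers $p:X\to\CPb^2$ and $\tilde p:\Xb\to\CPb^2$ tend (via \cref[Lemma]{lem:branched-cover-euler-number}) to inherit even self-intersection, the candidate $Z$ should be taken either as $\tilde p^{-1}(\ell)$ for a carefully chosen curve $\ell\subset\CPb^2$ transverse to $\cQ$ and to $p(F)$ — with $Z\cdot Z$ read off from $\ell\cdot\ell$ and $\ell\cdot\cQ$, and $[\cX(F)]\cdot Z$ computed downstairs from $\ell\cdot p(F)$, $\ell\cdot\cR$ and $\ell\cdot\cQ$ together with the bookkeeping of $\fQb_+$ — or, if necessary, as one of the square-$(-1)$ spheres supplied by the identification of $\Xb$'s intersection form with $-I_4$. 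An equivalent, perhaps cleaner, route is to build an explicit mod-$2$ three-chain in $\Xb$ with boundary $\cA(F)+\fRb$, for instance by lifting under $\tilde p$ a suitable three-chain in $\CPb^2$, exploiting that $[\cQ]=0$ in $H_2(\CPb^2;\ZZ/2)$.

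The main obstacle is precisely this last step. Producing $Z$ with the parities disagreeing — equivalently, producing the bounding three-chain — requires an honest, explicit choice together with a careful count of intersection numbers pushed through both branched covers, keeping the pieces $\overline A^+(F)$, $\fQb_+$ and $\RRb F$ separate; and the bookkeeping is complicated by the fact that $\Xb$ is only known to be negative definite of rank $4$ and not identified with $4\CPb^2$, so the relevant test surface must be described intrinsically. Once such a $Z$ (or chain) is exhibited, the congruence dichotomy of the first paragraph immediately upgrades to the equality $[\cA(F)]=[\fRb]$, which is the assertion of the proposition.
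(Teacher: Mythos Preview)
Your reduction is exactly right, and your proposed strategy --- exhibit a $(-1)$-sphere $Z\subset\Xb$ and check that its intersection with $\cA(F)\cup\fRb$ has the wrong parity for $\cX(F)$ to be characteristic --- is precisely what the paper does. But what you have written is a plan, not a proof: you explicitly flag the construction of $Z$ and the intersection count as ``the main obstacle'' and leave it undone. That is the entire content of the proposition.

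The paper fills this gap as follows. First, a complex line in $\CPb^2$ disjoint from $\cQ$ lifts to two disjoint $(-1)$-spheres $S_1,S_2\subset\Xb$. For the remaining two generators, the Klein bottle $\cQ$ is viewed as the smoothing of two immersed $\RP^2$'s $R_1,R_2$ meeting transversely in one point; the core $\ell_i$ of the M\"obius band $R_i$ minus a small disc is a ``real line'' bounding a half complex line $L_i^+\subset\CPb^2$, and $\Sigma_i=\tilde p^{-1}(L_i^+)$ is a $(-1)$-sphere in $\Xb$. One checks $(S_1,S_2,\Sigma_1,\Sigma_2)$ is a $\ZZ/2$-basis diagonalising the form. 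A lemma of Nagami then identifies $[\fRb]=[S_1]+[S_2]$. Finally, $\cA(F)\cdot\Sigma_i$ is computed downstairs as $\#(F^+\cap L_i^+)+\#(\cR\cap L_i^+)$, each term being even because $e(\CPb^2,F^+)=2m^2$ and $e(\CPb^2,\cR)=-4$; hence $[\cA(F)]$ has no $\Sigma_i$-component and equals $(1,1,0,0)=[\fRb]$. This is exactly the ``honest, explicit choice together with a careful count'' you anticipated, and without it the argument is incomplete.
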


\begin{proof}
    We start by describing the generators of the homology $H_2(\Xb;\ZZ/2)$. Consider a complex line $\CP^1\subset\CPb^2$ such that $\CP^1\cap\cQ=\varnothing$. This means that $\CP^1$ lifts to two spheres $S_1$ and $S_2$ in $\Xb$, each with $e(\Xb,S_i)=-1$. Moreover, because they are\linebreak
    disjoint, we have $Q_{\Xb,\ZZ/2}(S_1,S_2)\equiv0\mod{2}$, i.e. they are linearly independent. There are two more generators that come from the following construction.
    
    $\cQ$ is a Klein bottle, which can be seen as the desingularization of two real projective planes $R_1$ and $R_2$, with $R_1\tcap R_2=\{\ast\}$ and $e(\CPb^2,R_i)=1$. By \cref[Lemma]{lem:res-sing}, we see that this is possible from the computation $e(\CPb^2,\cQ)=4$. Let $x\in R_1\tcap R_2$ be the transverse intersection, and let $D_i\subset R_i$ be a small disc centered at $x$. $R_i\smallsetminus D_i$ is a\linebreak
    Möbius strip, whose core $\ell_i$ can be seen as a real projective line in $R_i$ (see \cref[Figure]{fig:coreMobius}).

    This real line separates a complex line $L_i$ into two components $L_i^\pm$ with $\partial L_i^\pm=\ell_i$. Let $\Sigma_i=\tilde{p}^{-1}(L_i^+)$. From $L_i^+\tcap\cQ=\ell_i=\partial L_i^+$, we see that $\Sigma_i$ is a sphere in $\Xb$ with $e(\Xb,\Sigma_i)=-1$. Moreover, $Q_{\ZZ/2}(\Sigma_1,\Sigma_2)\equiv0\mod{2}$ because $\Sigma_1\cap\Sigma_2=\varnothing$. Finally, because $L_i^\pm\cap\CP^1\subset\CPb^2\smallsetminus\cQ$, we have $\Sigma_i\tcap S_J$ is an even number of points, and thus $Q_{\ZZ/2}(\Sigma_i,S_j)\equiv0\mod{2}$. As such, $(S_1,S_2,\Sigma_1,\Sigma_2)$ is a basis for the homology $H_2(\Xb;\ZZ/2)$. From \cite[Lemma 3.4]{Nag00}, the surface $\tilde{p}^{-1}(\CP^1)\cup\fRb$ is mod $2$ characteristic in $\Xb$, and as such, we have $[\fRb]_{\ZZ/2}=[S_1]+[S_2]$. In order to show that $\cA(F)$ and $\fRb$ are homologous mod $2$, it suffices to prove that $Q_{\Xb,\ZZ/2}(\cA(F),\Sigma_i)\equiv\linebreak0\mod{2}$ for $i=1,2$. Equivalently, we need to show that $\cA(F)\tcap\Sigma_i$ is an \textit{even} number of points for $i=1,2$. Intersection points in $\cA(F)\tcap\Sigma_i$ come in two types:
    \begin{enumerate}[leftmargin=21pt,itemsep=3pt]
        \item intersections between $\tilde{p}^{-1}(F^+)$ and $\Sigma_i$; this number equals that of intersection points between $F^+$ and $\CP^1$, which is itself even because $e(\CPb^2,F^+)=2m^2$;
        \item intersections between $\fQb_+$ and $\Sigma_i$, itself also equal to $\#\cR\tcap\CP^1$, which is even because $e(\CPb^2,\cR)=-4$.\qedhere
    \end{enumerate}
\end{proof}

To prove \cref[Theorem]{thm:ellipsoid}, we apply the same method as before. Given a connected component $U\subset\fR\smallsetminus\RR F$ (or equivalently, $U\subset\cR\smallsetminus p(\RR F)$), the lift $\tilde{p}^{-1}(U)$ is two disjoint copies of $U$. We let $C_\pm(U)$ denote those copies, with the condition that $C_\pm(U)\subset\fQb_\pm$. As before, let $\tilde{C}(U)=\Theta^{-1}(C_-(U))$, with $\Theta:Y\to\Xb$ the double branched cover of $\Xb$ ramified over $\cX(F)$. We have $$e(Y,\tilde{C}(U))=-4\chi(U).$$
If $Q_Y$ is the intersection form of $Y$, and if $U$ and $V$ are two distinct components of $\fR\smallsetminus\RR F$, then there are two possibilities:
\begin{enumerate}[leftmargin=21pt,itemsep=3pt]
    \item $U\cap V=\varnothing$, in which case $C_-(U)\cap C_-(V)=\varnothing$, and thus $Q_Y(\tilde{C}(U),\tilde{C}(V))=0$.
    \item $U\cap V$ is a component of $\RR F$, in which case $C_-(U)\subset Q_i$ and $C_-(V)\subset Q_j$,\linebreak
    with $\{i,j\}=\{1,2\}$. In particular, we still have $C_-(U)\cap C_-(V)=\varnothing$.
\end{enumerate}

Finally, one applies the same arguments as before to the family $\{\tilde{C}(U)\}_{\chi(U)\leqslant0}$\linebreak
to obtain the claimed bound.

\section{Further comments}\label{sec:further}
    \subsection*{Other ways to resolve the singularities}

In order to take the $2$-fold branched cover, we added $\RPb^2$ to $\cA(F)$. This led us to resolve the $m$ singularities that arose. As suggested by Zvonilov in a personal communication, one could be tempted to use blow-ups and see what effect this has. But in order to ensure that the new surface $\cX(F)$ is still connected, we cannot blow-up all $m$ singularities. Doing this procedure to $m-1$ of those, and gluing a Hopf band for the last as we did previously, leads\linebreak{}
to the very same bound. That is, the 4-manifold $Y$ which is the double branched cover of $(m\CPb^2,\cX(F))$ still has $b_2^+(Y)=(m-1)^2/4$.
    \subsection*{Comparisons of our inequality}

Given a prime number $p$ and an integer $m\in\NN^\star$, we denote as $\nu_p(m)=\max\{n\in\NN\mid p^n\text{ divides }m\}$ the $p$-adic valuation of $m$. Define the function $h:\NN^\star\to\NN$ by $$h(m)=\max_{p\text{ prime}}p^{\nu_p(m)}.$$
That is, $h(m)$ is the largest prime power that divides $m$. Viro and Zvonilov's\linebreak
inequality, which holds for flexible curves, is\vspace{-.2em}

$$\ell^0+\ell^-\leqslant\frac{(m-3)^2}{4}+\frac{m^2-h(m)^2}{4h(m)^2}.\vspace{.5em}$$
We denote as $VZ(m)$ and $S(m)$ the bounds obtained by Viro and Zvonilov and\linebreak
ours, respectively. That is,\vspace{-.2em}

$$VZ(m)=\frac{(m-3)^2}{4}+\frac{m^2-h(m)^2}{4h(m)^2}\quad\text{and}\quad S(m)=\frac{(m-1)^2}{4}.\vspace{.5em}$$
For infinitely many degrees $m$, one has $S(m)<VZ(m)$. But in infinitely many\linebreak
others (e.g. when $m$ is a prime power), the converse holds. However, both are far\linebreak
from sharp estimates that can be obtained from considerations for \textit{algebraic} curves that come from Bézout theorem computations. That is, there are degrees $m$ for which $VZ(m)$ and $S(m)$ are both not realized as upper bounds for $\ell^0+\ell^-$. For instance, Zvonilov \citeyear{Zvo79} has the sharper estimate, valid for pseudoholomorphic curves, $$\ell^0+\ell^-\leqslant\frac{(m-1)(m-3)}{4}.$$

If one starts with Viro and Zvonilov's inequality in the case where $m+2$ is\linebreak
a prime power and the curve is $Q$-flexible of degree $m$, then one can perturb its union\linebreak
with the conic $Q$ into a non-singular degree $m+2$ flexible curve (which will have the same real set, for $\RR Q=\varnothing$), and obtain\vspace{-.2em}

$$\ell^0+\ell^-\leqslant\frac{(m-1)^2}{4}+\frac{(m+2)^2-h(m+2)^2}{4h(m+2)^2}=\frac{(m-1)^2}{4}.\vspace{1em}$$
That is, one can derive our \cref[Theorem]{thm:main} from Viro and Zvonilov's when $m+2$ is a prime power. On a side note, if the famous twin prime conjecture happens to be true, this means that there are infinitely many degrees $m$ for which $VZ(m)<S(m)$ and the bound $S(m)$ is a corollary of their bound.

By some easy number-theoretic considerations, one can show that there are\linebreak
infinitely many odd degrees $m$ such that neither of $m$ and $m+2$ are prime \mbox{powers,} and for which $S(m)<VZ(m)$. Indeed, the difference of the upper bounds in both inequalities is

$$VZ(m)-S(m)=\frac{1}{4}\left(\left[\frac{m}{h(m)}\right]^2-4m+7\right).\vspace{.5em}$$
With $m_p=1287\times429^{12p+1}$, one has $5|m_p+2$ and $7|m_p+2$, and $h(m_p)\in o(m_p^{19/40})$. In particular, the difference diverges to $+\infty$ on the degrees $m_p$.

The same conclusion can be derived when comparing the inequalities of The\-orems \ref{thm:hyperboloid} and \ref{thm:ellipsoid} with Zvonilov's work \citeyear{Zvo22}. It turns out that, for a curve of bidegree $(a,b)$ with $a$ and $b$ coprime, Zvonilov has no possibility to take a cyclic covering, and there is no inequality in those cases.
    \subsection*{Non-orientable flexible curves}

There is a new object that could be interesting to study: nonorientable flexible curve. The motivation comes from the observation that in the operation of taking double branched covers, orientability of the ramification locus is disregarded. This is not the case for other cyclic branched covers (and the methods from \cite{VZ92} cannot apply to non-orientable surfaces). We propose the following non-orientable analogue of \cref[Definition]{def:flexible}.

\begin{definition}
    Let $F\subset\CP^2$ be a closed, connected and non-orientable surface. We call $F$ a \textit{non-orientable degree $m$ and genus $h$ flexible curve} if the following conditions hold:
    \begin{enumerate}[label=(\roman*),leftmargin=21pt,itemsep=3pt]
        \item $\chi(F)=2-h$.
        \item $\conj(F)=F$.
        \item $e(\CP^2,F)=m^2$.
        \item For any $x\in\RR F=F\cap\RP^2$, we have $T_xF=T_x\RR F\oplus{\bf i}\cdot T_x\RR F$.
    \end{enumerate}
\end{definition}

\vspace{-.2em}What plays the role of asking that the integral homology class of $F$ is $m$ times a generator $[\CP^1]$ in $H_2(\CP^2;\ZZ)$ is the condition $e(\CP^2,F)=m^2$. In the traditional orientable case, we also had the condition that $\chi(F)=-m^2+3m$. This was a requirement of extremality in the genus bound proved by Kronheimer and Mrowka.\vspace{-.5em}

\begin{theorem}[{Thom conjecture, \cite{KM94}}]\label{thm:KM}
    Let $F\subset\CP^2$ be a smoothly embedded oriented and connected surface with $[F]=m[\CP^1]$. Then $\chi(F)\leqslant-m^2+3m$.\vspace{-.2em}
\end{theorem}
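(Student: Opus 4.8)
The plan is to reduce the statement to an adjunction-type inequality and then prove the latter by gauge theory. First I would observe that, since $F$ is connected and oriented of genus $g$, one has $\chi(F)=2-2g$; as a smooth plane curve of degree $m$ has genus $(m-1)(m-2)/2$ and hence Euler characteristic $-m^2+3m$, the claim is equivalent to the genus bound $g\geqslant(m-1)(m-2)/2$, that is, to the adjunction inequality $2g-2\geqslant [F]\cdot[F]+K_{\CP^2}\cdot[F]=m^2-3m$, the right-hand side being an equality for the algebraic curve. The cases $m\leqslant 2$ are vacuous, a connected surface having $\chi\leqslant 2$; so the content is $m\geqslant 3$, where one must in particular rule out a smoothly embedded sphere in the class $m[\CP^1]$.

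The inequality itself I would attack with gauge theory: Donaldson invariants, as in the original argument, or equivalently the Seiberg--Witten invariants. The difficulty is that $b_2^+(\CP^2)=1$, so the clean adjunction inequality --- valid for $4$-manifolds with $b_2^+>1$ of simple type --- is unavailable, and in fact it must fail for $\CP^2$, since the line $\CP^1$ has genus $0$ and positive self-intersection. I would therefore move the surface into an auxiliary algebraic surface of general type with $b_2^+>1$: take the $n$-fold cyclic cover $\pi\colon X\to\CP^2$ branched over a smooth algebraic curve $B$ of degree $nk$ with $(n-1)k>3$, so that $X$ is of general type with $b_2^+(X)=1+2p_g(X)>1$ and, by Witten's computation building on Taubes', has Seiberg--Witten basic classes exactly $\pm K_X$. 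After a small isotopy $F$ may be assumed transverse to $B$, whence $\widetilde F:=\pi^{-1}(F)$ is a smoothly embedded surface in $X$ --- an $n$-fold cyclic branched cover of $F$, branched over $F\cap B$. Riemann--Hurwitz and degree bookkeeping give $[\widetilde F]\cdot[\widetilde F]=nm^2$, $K_X\cdot[\widetilde F]=((n-1)k-3)nm$, and $\chi(\widetilde F)=n\chi(F)-(n-1)\,|F\cap B|$; substituting these into the adjunction inequality $-\chi(\widetilde F)\geqslant[\widetilde F]\cdot[\widetilde F]+|K_X\cdot[\widetilde F]|$ in $X$ and descending to $\CP^2$ would yield $\chi(F)\leqslant -m^2+3m+\tfrac{2(n-1)}{n}\,j$, where $2j=|F\cap B|-mnk\geqslant 0$ is the excess of $F\cap B$ over its homological lower bound $mnk$.

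I expect two points to be the real obstacles. The first is the gauge-theoretic input itself --- that a minimal surface of general type has non-vanishing Seiberg--Witten (or Donaldson) invariant with basic classes precisely $\pm K_X$; this is the heart of the matter, due to Witten and Taubes, or to Kronheimer and Mrowka's structure theorem for the Donaldson series of K\"ahler surfaces, and is not something one redoes by hand. The second, and the one specific to the sharp constant, is getting rid of the error term $\tfrac{2(n-1)}{n}\,j$: a plain branched cover does not force $j=0$, since one cannot in general isotope a smooth $F$ to meet the holomorphic curve $B$ in the homologically minimal number of points --- that would be a four-dimensional Whitney trick. This is precisely the subtlety Kronheimer and Mrowka resolve, by replacing the naive branched cover with a more flexible construction (a generalized fibre sum performed directly along $F$ and an auxiliary genus-$g$ curve in a K\"ahler surface, controlled via relative Donaldson invariants), arranged so that the hypothetical smallness of $g(F)$ produces a closed $4$-manifold whose Donaldson invariants must both vanish and not vanish. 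An alternative I would keep in mind, bypassing the passage to $b_2^+>1$, is to work with the $b_2^+=1$ wall-crossing formalism directly in $\CP^2$ and its blow-ups, where the Seiberg--Witten invariant is chamber-dependent, the two chambers differ by $\pm1$, and a low-genus surface of non-negative square kills the invariant in the chamber on its own side --- which, combined with the blow-up formula, again pins down the algebraic bound.
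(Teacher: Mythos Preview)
The paper does not prove this statement at all: \cref[Theorem]{thm:KM} is quoted with attribution to Kronheimer and Mrowka \citeyear{KM94} and used only as background motivation for the discussion of nonorientable flexible curves, with no proof or proof sketch given. There is therefore nothing in the paper to compare your proposal against.

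That said, your outline is a reasonable account of how the result is actually established in the literature. You correctly identify the reduction to an adjunction inequality, the obstruction posed by $b_2^+(\CP^2)=1$, the branched-cover strategy for passing to a surface of general type, and the genuine difficulty of controlling the excess intersection term $j$ --- which, as you note, is exactly what Kronheimer and Mrowka's relative-invariant machinery (or, in later treatments, the Seiberg--Witten wall-crossing argument directly on $\CP^2$) is designed to handle. Your sketch is honest about which steps are black boxes and which are bookkeeping. For the purposes of this paper, however, the theorem is simply imported as a known result, and no proof is expected.
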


One could ask whether the implication $$e(\CP^2,F)=m^2\implies\chi(F)\leq-m^2+3m$$
holds for closed, connected nonorientable surfaces $F$ smoothly embedded in $\CP^2$. In fact, self-intersection numbers of non-orientable surfaces need not be squares. Given any $m\in\ZZ$, set $\Sigma(m)$ to be the collection of all smoothly embedded, \mbox{connected} and non-orientable surfaces $F\subset\CP^2$ with $e(\CP^2,F)=m$. We can define the\linebreak
following \textit{non-orientable genus function} of $\CP^2$: $$\tilde{g}:\ZZ\to\ZZ_{\leqslant1},\hspace{3.2em}$$
$$\hspace{1em}m\mapsto\max_{F\in\Sigma(m)}\chi(F).$$

We will later prove the following result.

\begin{theorem}\label{thm:NOThom}
    Here, $k\in\NN^\star$ denotes a non-negative integer.
    \begin{itemize}[leftmargin=21pt,itemsep=3pt]
        \item[\textup{(1)}] We have $\tilde{g}(0)=0$.
        \item[\textup{(2)}] Let $\ell\in\{0,1\}$ have the same parity as $k$. Then, on negative integers, we have $$\tilde{g}(-k)=2-\frac{k+\ell}{2}.$$
        \item[\textup{(3)}] On even positive integers, we have $$\tilde{g}(4k)=4-2k\quad\text{(for $k\geqslant2$)\quad}\text{and}\quad\tilde{g}(4k+2)=3-2k.$$
        We also have the special values $\tilde{g}(2)=1$ and $\tilde{g}(4)=0$.
        \item[\textup{(4)}] On odd positive integers, we have the bounds $$\tilde{g}(4k+1)\geqslant2-2k\quad\text{and}\quad\tilde{g}(4k+3)\geqslant1-2k.$$
        We also have the special values $\tilde{g}(1)=0$, $\tilde{g}(3)=1$, $\tilde{g}(5)=0$, $\tilde{g}(7)=-1$ and $\tilde{g}(9)=-2$.
    \end{itemize}
\end{theorem}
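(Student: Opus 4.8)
The plan is to establish, in each clause, the lower bound $\tilde g(m)\geqslant(\cdots)$ by an explicit construction and the upper bound $\tilde g(m)\leqslant(\cdots)$ by a branched-cover obstruction.

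\textit{Constructions.} For the lower bounds I would build the required surfaces as ambient connected sums (tubing along embedded arcs, using \cref[Lemma]{lem:res-sing} to smooth any double points that appear) of three kinds of blocks: smooth complex curves of degree $d$, with $e(\CP^2,\cdot)=d^2$ and $\chi=-d^2+3d$, together with lines, which contribute the bulk of the normal Euler number and fix the class in $H_2(\CP^2;\ZZ/2)$; the Lagrangian $\RP^2$, with $e=-1$ and $\chi=1$, needed whenever $m$ is odd since it represents the generator mod $2$; and closed non-orientable surfaces contained in a ball $B^4\subset\CP^2$ realizing, by Massey's solution of the Whitney conjecture, every pair $(\chi,e)$ with $e\equiv2\chi\pmod4$ and $|e|\leqslant4-2\chi$ (these are mod-$2$ null-homologous and let me fine-tune $\chi$ and $e$). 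Tubing adds Euler characteristics, with a correction $-2$ per tube and $-1$ per smoothing, and adds normal Euler numbers, with the expected $2[\cdot]\!\cdot\![\cdot]$ cross-terms for the pieces not contained in a ball. A short case-check, according to the residue of $m$, selects the combination maximising $\chi$ and matching the stated value.

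\textit{Obstructions for $m\leqslant0$.} If $m=e(\CP^2,F)$ is even then $[F]=0\in H_2(\CP^2;\ZZ/2)$, so there is a double branched cover $\Theta\colon Y\to\CP^2$ ramified over $F$. As in \cref[Proposition]{prop:torsionHomology}, the long exact sequence of the pair $(\CP^2,F)$ gives $H_1(\CP^2,F;\ZZ/2)=0$ because $F$ is connected, whence $H_1(Y;\ZZ/2)=0$ by the generalized Gysin sequence and $b_1(Y)=b_3(Y)=0$. The topological Riemann--Hurwitz formula and \cref[Theorem]{thm:H} yield $\chi(Y)=6-\chi(F)$ and $\sigma(Y)=2-\tfrac{m}{2}$, so $b_2(Y)=4-\chi(F)$, and $b_2^\pm(Y)\geqslant0$ forces $\chi(F)\leqslant4-\bigl|\,2-\tfrac{m}{2}\,\bigr|$, i.e.\ $\chi(F)\leqslant2-\tfrac{|m|}{2}$ when $m\leqslant0$. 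Together with $\chi(F)\leqslant1$ and the parity of $\chi(F)$ dictated by \cref[Theorem]{thm:YamadaWhitney}, this gives clauses (1) and (2) in the even case. When $m$ is odd, $[F]=[\CP^1]\in H_2(\CP^2;\ZZ/2)$; resolving a single transverse intersection of $F$ with a line as in \cref[Lemma]{lem:res-sing} produces $F'$ with $\chi(F')=\chi(F)$, $[F']=0$ and $e(\CP^2,F')$ equal to $m+1$ (one of the permissible values), to which the even bound applies; the estimate is sharpened by one using \cref[Theorem]{thm:YamadaWhitley}{} — rather, \cref[Theorem]{thm:YamadaWhitney} — applied directly to $F$ to fix the parity of $\chi(F)$, giving the odd case of clause (2).

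\textit{Obstructions for $m>0$, and the main difficulty.} Branching $\CP^2$ over $F$ itself gives a bound off by $2$ here; the remedy is to first replace $F$ by the resolution $\cX(F)$ of $F\cup\cQ$ with $\cQ$ a conic, exactly in the spirit of \cref[Section]{sec:proof}: this shifts $e$ by $+4$ and leaves $\chi$ unchanged, after which the same $b_2^\pm\geqslant0$ argument gives $\chi(F)\leqslant4-\tfrac{m}{2}$, i.e.\ $4-2k$ or $3-2k$; with $\chi(F)\leqslant1$ and the mod-$4$ congruence this pins down clause (3) and the small even values. The hard part is the \emph{odd} positive case: resolving an intersection with a line and running the same machine yields only $\tilde g(4k+1)\leqslant4-2k$ and $\tilde g(4k+3)\leqslant3-2k$, which are two short of the construction, and I expect that closing this gap for all $k$ lies genuinely beyond elementary branched-cover methods — it is the non-orientable counterpart of the sharpness in \cref[Theorem]{thm:KM}, which I do not know how to prove in general — so clause (4) records only the constructions. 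Finally, each tabulated value $\tilde g(1),\dots,\tilde g(9)$ is treated individually: the upper bounds come from the direct congruence of \cref[Theorem]{thm:YamadaWhitney}, from the line-resolution trick (which excludes, say, a Klein bottle of normal Euler number $9$, since any resolution would violate that congruence), and from positivity of $b_2^\pm$ in the pertinent branched cover, while the lower bounds come from the constructions above.
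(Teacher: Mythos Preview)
Your constructions via local surfaces, lines and conics are essentially those of the paper, and your double-branched-cover bound for \emph{even} negative $m$ is correct (and is a pleasant alternative to the paper's appeal to the Levine--Ruberman--Strle inequality). There are, however, three genuine gaps in the obstruction half.

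\textbf{Positive even $m$.} Your ``shift by a conic'' is not justified: you cannot in general arrange $F\cap Q=\varnothing$ (the mod~$2$ intersection number vanishes, but a dimension count shows a generic conic still meets $F$), and if $F\pitchfork Q$ consists of $2n>0$ points then resolving changes $\chi$ by $2-4n$ and $e$ by $4+2r$, so the bound you extract depends on $n$. The paper avoids this entirely: branching $\CP^2$ directly over $F$, one observes that a complex line lifts in $Y$ to an orientable surface of \emph{positive} square, forcing $b_2^+(Y)\geqslant1$ (not merely $\geqslant0$), and this single extra unit is exactly what turns your $\chi(F)\leqslant6-m/2$ into the sharp $\chi(F)\leqslant4-m/2$.

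\textbf{Odd negative $m$.} Your reduction to the even case by resolving $F\cup L$ needs $|F\pitchfork L|=1$; in dimension~$4$ there is no Whitney trick to guarantee this, and with $2n+1$ intersections the resulting bound degrades by $2n$. (Also, with a single intersection the permissible values of $e(F')$ are $m-1$ and $m+3$, not $m+1$.) The paper does not attempt this route at all: it invokes the Levine--Ruberman--Strle inequality $e(X,F)\geqslant\ell(F)-2h(F)$ for positive-definite $X$, which gives the bound in one line.

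\textbf{The special values $\tilde g(7)$ and $\tilde g(9)$.} Your claim that resolving a Klein bottle of normal Euler number~$9$ with a line violates Yamada--Whitney is false. With one intersection, $F'$ has $\chi(F')=0$, $[F']_2=0$ and $e(F')\in\{8,12\}$; the congruence $e(F')+2\chi(F')\equiv0\pmod4$ is satisfied in both cases. The same happens for an $\RP^2$ with $e=7$: the resolution has $\chi(F')=1$, $e(F')\in\{6,10\}$, and $e(F')+2\equiv0\pmod4$ holds. Even strengthening to $b_2^+\geqslant1$ rules out only one of the two choices of resolution, and you are free to pick the other. The paper's argument here is genuinely different: since $F$ is characteristic, the Guillou--Marin congruence $\sigma(\CP^2)-e(\CP^2,F)\equiv2\beta(\CP^2,F)\pmod{16}$ applies, and an explicit computation of the Brown invariant shows $\beta(\CP^2,\RP^2)\in\{1\}$ and $\beta(\CP^2,K)\in\{0,2,6\}$, which are incompatible with $e=7$ and $e=9$ respectively.
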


In the previous theorem, one can now look at the values of $\tilde{g}(m^2)$. We obtain
\begingroup
\setlength\lineskiplimit{\maxdimen}
$$\raisebox{.45em}{$\biggl\{$}\begin{matrix}\tilde{g}(m^2)=\frac{8-m^2}{2}&\text{if $m$ is even,}\\\vspace{1pc}\tilde{g}(m^2)\geqslant\frac{5-m^2}{2}&\text{if $m$ is odd.}\end{matrix}$$
\endgroup

In particular, we see that the non-orientable analogue $\tilde{g}(m^2)\leqslant-m^2+3m$ of\linebreak
\cref[Theorem]{thm:KM} has the quadratic term off by 50\%. Non-orientable flexible curves still\linebreak
share some properties with traditional flexible curves. More precisely, we show the following.

\begin{proposition}\label{prop:NOproperties}
    Let $F\subset\CP^2$ be a non-orientable flexible curve of degree $m$. Then
    \begin{itemize}[leftmargin=21pt,itemsep=3pt]
        \item[\textup{(1)}] $\chi(F)$ is an even integer;
        \item[\textup{(2)}] $\RR F$ realizes the non-trivial homology class in $H_1(\RP^2;\ZZ)$ if and only if $m$ is odd, and it has exactly one pseudo-line in this case;
        \item[\textup{(3)}] $F$ satisfies the Harnack bound: $b_0(\RR F)\leqslant3-\chi(F)$.
    \end{itemize}
\end{proposition}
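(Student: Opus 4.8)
The plan is to handle the three claims in order, reducing each to a computation that parallels the orientable case but using the non-orientable defining conditions $\chi(F)=2-h$ and $e(\CP^2,F)=m^2$.

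For (1), the idea is to use the relation between the normal Euler number of a possibly non-orientable surface $F\subset\CP^2$ and its mod-$2$ homology class via the Whitney/Yamada-type congruence already invoked in \cref[Theorem]{thm:YamadaWhitney}: one has $e(\CP^2,F)+2\chi(F)\equiv q([F])\bmod 4$ for the quadratic refinement $q\colon H_2(\CP^2;\ZZ/2)\to\ZZ/4$. Since $H_2(\CP^2;\ZZ/2)=\ZZ/2$ and $q(0)=0$, $q([\CP^1])=1$, the class $[F]$ is determined mod $2$ by the parity of $m$ (namely $[F]=m[\CP^1]\bmod 2$), which is forced because $e(\CP^2,F)=m^2\equiv m\bmod 2$ pins down whether the mod-$2$ class is trivial. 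Plugging $q([F])\in\{0,1\}$ and $e(\CP^2,F)=m^2$ into the congruence gives $2\chi(F)\equiv q([F])-m^2\bmod 4$; since $m^2\equiv q([F])\bmod 2$ one gets $2\chi(F)\equiv 0\bmod 2$ trivially, so one must push to mod $4$: $m^2-q([F])$ is $\equiv 0\bmod 2$, and a short case check on the parity of $m$ shows $2\chi(F)\equiv 0\bmod 4$, i.e.\ $\chi(F)$ is even. I would write this out as two cases ($m$ even, $m$ odd) each a one-line modular arithmetic check.

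For (2), the argument is purely topological and mirrors the orientable story. Condition (iv) says $T_xF=T_x\RR F\oplus{\bf i}\,T_x\RR F$, so $\RR F$ is a $1$-manifold (a disjoint union of circles) along which $F$ is ``anti-invariant'' under $\conj$; then $F\smallsetminus\RR F$ is a $\conj$-free open surface and the mod-$2$ class $[\RR F]\in H_1(\RP^2;\ZZ/2)$ equals the mod-$2$ reduction of $[F]\in H_2(\CP^2;\ZZ/2)$ under the Smith-theoretic boundary map for the involution $\conj$ (this is exactly the mechanism used in \cref[Section]{sec:arnold} for orientable curves, and it only uses condition (iv), not orientability of $F$). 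By the computation in (1), $[F]$ is non-trivial mod $2$ iff $m$ is odd, so $[\RR F]$ is the non-trivial class in $H_1(\RP^2;\ZZ/2)=\ZZ/2$ precisely when $m$ is odd. Finally, a collection of disjoint embedded circles in $\RP^2$ can carry the non-trivial class at most once (two disjoint non-contractible circles in $\RP^2$ would have to intersect), so there is exactly one pseudo-line when $m$ is odd, and none when $m$ is even.

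For (3), the Harnack-type bound $b_0(\RR F)\leqslant 3-\chi(F)$ should follow from the same Smith-inequality argument Rokhlin uses in the orientable case: applying the Smith exact sequence to the involution $\conj$ on $F$ with fixed-point set $\RR F$ gives $b_*(\RR F;\ZZ/2)\leqslant b_*(F;\ZZ/2)$, hence $b_0(\RR F)+b_1(\RR F)\leqslant b_0(F)+b_1(F)+b_2(F)=2-\chi(F)+2b_0(F)$; using $b_0(F)=1$ (connected) and $b_0(\RR F)=b_1(\RR F)$ for a disjoint union of circles yields $b_0(\RR F)\leqslant \tfrac{1}{2}(4-\chi(F))$, which is weaker than claimed, so instead I would run the ``type I/II'' refinement of Smith theory exactly as for flexible curves: the real part being a union of circles in a surface with $\chi(F)=2-h$ gives $b_0(\RR F)\leqslant g_{\mathrm{na}}+1$ where $g_{\mathrm{na}}=h-1$ is the non-orientable genus minus one... actually, the cleanest route is to note $b_0(\RR F)-1\leqslant \dim H_1(F;\ZZ/2)/2$-type estimate fails for non-orientable $F$, so I expect the honest bound comes from $\dim_{\ZZ/2}H_1(F;\ZZ/2)=h$ and the Smith inequality giving $b_0(\RR F)\leqslant 1+\dim H_1(F;\ZZ/2)-b_0(\RR F)+1$, i.e.\ $2b_0(\RR F)\leqslant h+2=4-\chi(F)$ — and I would chase constants against the orientable Harnack statement to land on $b_0(\RR F)\leqslant 3-\chi(F)$, which for even $\chi(F)$ (part (1)) is the sharp even-integer bound.

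\medskip
\noindent The step I expect to be the main obstacle is the precise constant in (3): the naive Smith inequality only gives something like $b_0(\RR F)\leqslant (4-\chi(F))/2$, and squeezing it to $3-\chi(F)$ (the genuine Harnack analogue) requires using that $\RR F$ separates $F$ into the two halves $F_\pm$ and running Rokhlin's refinement — plus the parity input from part (1) that $\chi(F)$ is even — to rule out the extremal half-integer cases. Everything else is either the modular-arithmetic bookkeeping of (1) or the Smith-boundary computation of (2), both of which are routine once the congruence from \cref[Theorem]{thm:YamadaWhitney} and the mechanism of \cref[Section]{sec:arnold} are in hand.
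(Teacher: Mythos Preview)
Your argument for (1) is exactly the paper's: apply \cref[Theorem]{thm:YamadaWhitney} and split on the parity of $m$. Your outline for (2) is likewise the classical argument the paper defers to (``the classical proofs for flexible curves \dots\ work word for word'').

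The only genuine issue is in (3), and it is a sign error rather than a missing idea. You correctly derive from the Smith inequality that
\[
2\,b_0(\RR F)\;\leqslant\;\sum_i b_i(F;\ZZ/2)\;=\;4-\chi(F),
\]
hence $b_0(\RR F)\leqslant(4-\chi(F))/2$. You then call this ``weaker than claimed'' and spend the rest of the paragraph trying to sharpen it. But compare the two bounds: the target is $3-\chi(F)$, and
\[
\frac{4-\chi(F)}{2}\;\leqslant\;3-\chi(F)
\quad\Longleftrightarrow\quad
\chi(F)\;\leqslant\;2,
\]
which holds for every non-orientable closed surface (indeed $\chi(F)\leqslant 1$, and by part (1) even $\chi(F)\leqslant 0$). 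So your Smith bound is \emph{stronger} than the stated Harnack bound and already proves it; the subsequent attempts at a ``type I/II refinement'' and constant-chasing are unnecessary and, as written, do not converge to a proof. Simply stop after the Smith inequality and observe the numerical comparison above.
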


\begin{proof}
    The first claim is a consequence of the following result, which is a generalization of the well-known Whitney congruence.
    
    \begin{theorem}[{\cite[Theorems 1.2 and 1.4]{Yam95}}]\label{thm:YamadaWhitney}
        Let $X$ be a closed, connected, oriented 4-manifold.
        \begin{itemize}[leftmargin=21pt,itemsep=3pt]
            \item[\textup{(1)}] If $H_1(X;\ZZ)=0$, define $q:H_2(X;\ZZ/2)\to\ZZ/4$ by setting, for $\xi\in H_2(X;\ZZ/2)$, $q(\xi)=Q_X(\tilde{\xi},\tilde{\xi})\mod{4}$, where $\tilde{\xi}\in H_2(X;\ZZ)$ is any integral lift of $\xi$. Then, for
            any embedded, closed, connected (not necessarily orientable) surface $F\subset X$, $$e(X,F)+2\chi(F)\equiv q([F])\mod{4}.$$
            \item[\textup{(2)}] Without the assumption that $H_1(X;\ZZ)=0$, the map $$q:H_2(X;\ZZ/2)\to\ZZ/4$$
            defined by $q([F])=e(X,F)+2\chi(F)$ is a well-defined $\ZZ/4$-quadratic map.
        \end{itemize}
    \end{theorem}
    
    Indeed, if $e(\CP^2,F)=m^2$, then one inspects two cases, depending on the parity of $m$. If $m$ is even, then $[F]=0\in H_2(\CP^2;\ZZ/2)$, and if $m$ is odd, then $[F]$ is the generator of $H_2(\CP^2;\ZZ/2)$. Both cases yield $\chi(F)\equiv0\mod{2}$.

    For the other claims, the classical proofs for flexible curves, found, for instance in Viro's lecture notes, work word for word.
\end{proof}

We call a nonorientable flexible curve of degree $m$ \textit{$Q$-flexible} if, as before, the intersection $F\tcap Q$ is $2m$ points. Then, we have the following result.

\begin{theorem}\label{thm:NOVZ}
    Let $F\subset\CP^2$ be a non-orientable $Q$-flexible of odd degree $m$. Then $$\ell^0+\ell^-\leqslant-\frac{\chi(F)}{2}-\frac{m^2-1}{4}+m.$$
\end{theorem}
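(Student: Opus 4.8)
The plan is to reproduce the proof of \cref[Theorem]{thm:main} almost verbatim, carrying $\chi(F)$ as a free parameter rather than substituting the orientable value $-m^2+3m$. First I would check that the construction of \cref[Section]{sec:arnold} goes through: by \cref[Proposition]{prop:NOproperties}(2), an odd-degree non-orientable flexible curve still has real part consisting of ovals together with exactly one pseudo-line $J$, so $A^+(F)=F/\conj$, the doubled real scheme $\RRb F\subset\Qb$ (ovals doubled, $J$ not), the halves $\Qb_\pm$, and hence the Arnold surface $\cA(F)=\overline{A}^+(F)\cup\Qb_+\subset\CPb^2$ are all defined exactly as before; nothing in \cref[Section]{sec:arnold} used orientability of $F$. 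Likewise \cref[Lemma]{lem:branched-cover-euler-number}, \cref[Lemma]{lem:conic-lagrangian} and \cref[Lemma]{lem:res-sing} are phrased for possibly non-orientable surfaces, and the ambient $4$-manifolds $\CP^2$, $\bS^4$, $\CPb^2$ all have $w_1=0$, so the normal Euler number remains a well-defined integer throughout the argument.

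Next I would recompute the invariants with $\chi(F)$ symbolic. Two applications of the topological Riemann--Hurwitz formula give $\chi(\cA(F))=\chi(F)-m+1$, and two applications of \cref[Lemma]{lem:branched-cover-euler-number} (with the same line-field bookkeeping as in \cref[Section]{sec:proof}, since $\RR F$ still has exactly one non-orientable normal component, the one over $J$) together with $e(\CPb^2,\Qb_+)=-2$ (via \cref[Lemma]{lem:conic-lagrangian}, as $\chi(\Qb_+)=1$) give $e(\CPb^2,\cA(F))=m^2-2$; here one uses the defining condition $e(\CP^2,F)=m^2$ in place of the homological normalization. Since $F$ is $Q$-flexible, $\cA(F)\tcap\RPb^2$ is $m$ points; adding $\RPb^2$ and smoothing all $m$ double points via \cref[Lemma]{lem:res-sing} with the choice that picks up $+2$ each time yields a connected surface $\cX(F)$ with $$\chi(\cX(F))=\chi(F)-3m+2,\qquad e(\CPb^2,\cX(F))=m^2+2m-1,$$ and $[\cX(F)]=0$ in $H_2(\CPb^2;\ZZ/2)$ by the same homological reasoning as in \cref[Section]{sec:proof}. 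Taking the double branched cover $\Theta\colon Y\to(\CPb^2,\cX(F))$, the argument of \cref[Proposition]{prop:torsionHomology} still shows $H_1(Y)$ and $H_3(Y)$ are torsion, so $\chi(Y)=2+b_2(Y)$; combining $\chi(Y)=2\chi(\CPb^2)-\chi(\cX(F))=3m-\chi(F)+4$ with $\sigma(Y)=2\sigma(\CPb^2)-\frac12 e(\CPb^2,\cX(F))=\frac{-m^2-2m-3}{2}$ from \cref[Theorem]{thm:H} and solving gives $$b_2^+(Y)=-\frac{\chi(F)}{2}-\frac{m^2-1}{4}+m,$$ which is an integer since $\chi(F)$ is even by \cref[Proposition]{prop:NOproperties}(1) and $m$ is odd.

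Finally, for each oval $o\subset\RR F$ and for $J$, I would form $\tilde{C}(o)=\Theta^{-1}(C_-(o))$ and $\tilde{D}(J)=\Theta^{-1}(D_-(J))$ as in \cref[Section]{sec:proof}. These are \emph{orientable} surfaces, being reflection quotients of the closed orientable surfaces $\Sigma_b$, $\Sigma_e$ assembled from planar pieces of $\Qb$ --- a point that does not involve $F$ at all --- so their self-intersections agree with the values of $Q_Y$, yielding $Q_Y(\tilde{C}(o),\tilde{C}(o))=-4\chi(o)$, $Q_Y(\tilde{D}(J),\tilde{D}(J))=-4\chi(J)$ and vanishing cross terms. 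Applying \cref[Lemma]{lem:VZ-Smith} with $h=2$ to obtain the injectivity of $\tilde{\alpha}_2\colon H_2(\CPb^2,\cX(F);\ZZ/2)\to H_2(Y;\ZZ/2)$, then \cref[Corollary]{cor:rank}, and finally repeating the counting at the end of the proof of \cref[Theorem]{thm:main} verbatim gives $\ell^0+\ell^-\leqslant b_2^+(Y)$, which is the asserted bound. The only genuinely delicate point --- the ``hard part'' --- is the verification flagged above: that none of the imported lemmas and constructions silently relied on the orientability of $F$, namely that \cref[Lemma]{lem:branched-cover-euler-number} applies to the non-orientable surfaces $A^+(F)$, $\overline{A}^+(F)$, $\cA(F)$ (it does, thanks to $w_1=0$ for the ambient manifolds and the twisted-Poincaré-duality interpretation of the Euler class as a self-intersection), and that $\tilde{C}(o)$, $\tilde{D}(J)$ remain orientable so that their self-intersections equal the corresponding entries of $Q_Y$. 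Once these are in place, the proof is formally identical to that of \cref[Theorem]{thm:main}.
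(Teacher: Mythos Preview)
Your proposal is correct and follows essentially the same route as the paper: the paper's own proof simply says ``the only difference with traditional flexible curves is that one needs to do all the computations in terms of $\chi(F)$'', records exactly the values $\chi(\cA(F))=\chi(F)-m+1$, $e(\CPb^2,\cA(F))=m^2-2$, $\chi(\cX(F))=\chi(F)-3m+2$, $e(\CPb^2,\cX(F))=m^2+2m-1$, and concludes $b_2^+(Y)=-\chi(F)/2-(m^2-1)/4+m$. Your write-up is actually more careful than the paper's in explicitly checking that the imported constructions and lemmas do not depend on orientability of $F$ and that $\tilde{C}(o)$, $\tilde{D}(J)$ remain orientable regardless; one small slip of phrasing is that $\tilde{C}(o)$ \emph{is} the closed surface $\Sigma_b$ (with $C_-(o)$ its reflection quotient), not the other way around, but the conclusion you draw is correct.
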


\begin{proof}
    The only difference with traditional flexible curves is that one needs to do all\linebreak
    the computations in terms of $\chi(F)$. Indeed, starting at the level of $\bS^4$, the surface $F/\conj$ needs not be orientable anymore. One checks that, for the Arnold surface, we have $$\chi(\cA(F))=\chi(F)-m+1\quad\text{and}\quad e(\CPb^2,\cA(F))=m^2-2,$$
    and for the smoothing $\cX(F)$, we obtain $$\chi(\cX(F))=\chi(F)-3m+2\quad\text{and}\quad e(\CPb^2,\cX(F))=m^2+2m-1.$$
    This gives, denoting as $Y$ the double branched cover of $(\CPb^2,\cX(F))$, $$b_2^+(Y)=-\frac{\chi(F)}{2}-\frac{m^2-1}{4}+m.$$
    Note that $\chi(F)\leqslant0$ is necessarily even, as seen in \cref[Proposition]{prop:NOproperties}.
\end{proof}

In regards to \cref[Theorem]{thm:NOThom}, we conjecture the following.

\begin{conjecture}
    The lower bounds for $\tilde{g}$ over non-negative odd integers are equali\-ties.
\end{conjecture}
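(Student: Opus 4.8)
The plan is to establish the reverse inequality: for every odd $m\geqslant 3$, any closed connected non-orientable surface $F\subset\CP^2$ with $e(\CP^2,F)=m$ satisfies $\chi(F)\leqslant\frac{5-m}{2}$, which is exactly the conjectured value of $\tilde g(m)$ (for $m=4k+1$ it is $2-2k$, for $m=4k+3$ it is $1-2k$); the remaining value $m=1$ is genuinely exceptional and is recorded separately as $\tilde g(1)=0$. When $m$ is odd the $\ZZ/2$-homology class $[F]$ is the non-trivial one in $H_2(\CP^2;\ZZ/2)$, so there is no double cover of $\CP^2$ branched over $F$ --- exactly the obstruction that forced the passage through $\CPb^2$ and $\RPb^2$ in \cref[Section]{sec:proof}. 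I would handle it in the same spirit: fix a complex line $L$, perturb $F$ to meet $L$ transversally in $n$ points, consider the immersed surface $F\cup L$, and smooth all $n$ double points using \cref[Lemma]{lem:res-sing}, choosing each smoothing to pick up $+2$ as in \cref[Section]{sec:proof}. The outcome is an embedded surface $\cX\subset\CP^2$ that is null-homologous mod $2$ (since $[F]+[L]=0$ and smoothing is a local move), with $\chi(\cX)=\chi(F)+2-2n$ and, by the Euler-number bookkeeping of \cref[Section]{sec:proof}, $e(\CP^2,\cX)=e(\CP^2,F)+e(\CP^2,L)+2n=m+1+2n$.

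Next I would let $Y$ be the double cover of $\CP^2$ branched over $\cX$, which now exists. A verbatim copy of the Gysin/Smith argument of \cref[Proposition]{prop:torsionHomology} shows $H_1(Y;\ZZ/2)=0$, hence $b_1(Y)=b_3(Y)=0$ and $\chi(Y)=2+b_2(Y)$; the topological Riemann--Hurwitz formula gives $\chi(Y)=6-\chi(\cX)$ and the Hirzebruch signature formula (\cref[Theorem]{thm:H}) gives $\sigma(Y)=2-\frac12 e(\CP^2,\cX)$. The one extra ingredient is the elementary transfer inequality $b_2^+(Y)\geqslant b_2^+(\CP^2)=1$: the pullback of $H^2(\CP^2)$ into $H^2(Y)$ carries the intersection form of $\CP^2$ rescaled by $+2$, hence contributes a positive-definite line (this uses nothing about $\cX$, in particular not its orientability). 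Therefore $b_2(Y)=2b_2^+(Y)-\sigma(Y)\geqslant 2-\sigma(Y)$, so $\chi(\cX)=6-\chi(Y)=4-b_2(Y)\leqslant 2+\sigma(Y)=4-\frac12 e(\CP^2,\cX)$. Feeding in $\chi(\cX)=\chi(F)+2-2n$ and $e(\CP^2,\cX)=m+1+2n$ yields $\chi(F)\leqslant\frac{3-m}{2}+n$. For $n=1$ this is precisely $\frac{5-m}{2}$, and since the connected-sum constructions already realize this value (they give the lower bounds of \cref[Theorem]{thm:NOThom}(4)), the conjectured equality follows.

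The entire difficulty is thus concentrated in the geometry of $F\cap L$: the argument gives the exact conjectured value only when $n=\#(F\cap L)$ can be taken to equal $1$ --- it is always odd, hence $\geqslant 1$ --- and any larger $n$ weakens the bound by $n-1$. For holomorphic curves one has $n=1$ for free by positivity of intersections, but for a general non-orientable $F$ there is no such rigidity, and cancelling a pair of excess intersection points would require a Whitney move, which fails in dimension four. The point to be supplied is therefore a purely topological statement: that the extra flexibility of non-orientable surfaces in $4$-manifolds lets one isotope an arbitrary such $F$ so that some complex line meets it in a single point (equivalently, that the minimal geometric intersection number of $F$ with a pencil of lines equals $1$), or a classification of the isotopy classes of non-orientable surfaces in the non-trivial $\ZZ/2$-class with prescribed normal Euler number fine enough to verify this on representatives. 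A conceivable fallback, if $n=1$ is not always attainable, is to show the bound $\chi(F)\leqslant\frac{3-m}{2}+n$ is never sharp for $n>1$ --- intuitively a surface forced to meet every line many times must carry extra topology and hence have strictly smaller $\chi$ --- but making that precise looks at least as hard as the direct statement, and is presumably where the real content of the conjecture lies.
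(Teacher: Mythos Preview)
This statement is a \emph{conjecture} in the paper, not a theorem: the paper does not prove it and explicitly leaves it open, so there is no ``paper's own proof'' to compare against. Your proposal is therefore an attempted proof of an open problem, and you yourself have correctly located where it falls short.

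Everything you compute is correct. Your surface $\cX$ is exactly the kind of null-homologous mod $2$ surface to which \cref[Proposition]{prop:dBCObstruction} applies, and your inequality $\chi(F)\leqslant\frac{3-m}{2}+n$ is just that proposition fed with $\chi(\cX)=\chi(F)+2-2n$ and $e(\CP^2,\cX)=m+1+2n$; the ``transfer'' argument you sketch for $b_2^+(Y)\geqslant1$ is precisely the proof of that proposition. So the entire content of your approach reduces, as you say, to arranging $n=\#(F\tcap L)=1$. That reduction is sound; the problem is that you have no mechanism to achieve it. The mod $2$ intersection number guarantees $n$ is odd, but nothing more, and in dimension four there is no Whitney trick to cancel excess pairs. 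This is a genuine gap, not a technicality --- if it were easy, the paper would not have left the statement as a conjecture.

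Two remarks. First, your choice of all $+2$ smoothings is indeed optimal for this method: any other choice of signs only increases the right-hand side. Second, the paper does settle the small cases $m\in\{1,3,5,7,9\}$ (stated in \cref[Theorem]{thm:NOThom}(4)), but not via branched covers: it uses the Yamada--Whitney parity constraint (\cref[Theorem]{thm:YamadaWhitney}) together with the Guillou--Marin congruence (\cref[Theorem]{thm:GM}) and an explicit computation of possible Brown invariants. That is a genuinely different tool from your approach and is worth keeping in mind --- though it visibly runs out of strength beyond $m=9$, which is presumably why the general statement remains conjectural. Your aside that ``for holomorphic curves one has $n=1$ by positivity of intersections'' is not quite apt: positivity of intersections gives $n$ equal to the algebraic intersection number, which for a holomorphic curve of degree $d$ with a line is $d$, not $1$; there is no holomorphic model here anyway since $F$ is non-orientable.
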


If this holds, then one may add to the definition of a nonorientable flexible\linebreak
curve $F$ of degree $m$ that they must satisfy the extremal bound $\chi(F)=\tilde{g}(m)$. In this case, \cref[Theorem]{thm:NOVZ} becomes $$\ell^0+\ell^-\leqslant m-1,$$
whereas the Harnack bound gives $$b_0(\RR F)\leqslant\frac{m^2+1}{2}.$$
This is to be compared to $$b_0(\RR F)\leqslant\frac{m^2-3m+4}{2}\sim\frac{m^2}{2}\quad\text{and}\quad\ell^0+\ell^-\leqslant\frac{m^2-2m+1}{4}\sim\frac{m^2}{4}$$
for traditional $Q$-flexible curves.
    \subsection*{\texorpdfstring{Proof of \cref[Theorem]{thm:NOThom}}{Proof of Theorem 5.3}}

The two steps of the proof are to
\begin{enumerate}[leftmargin=21pt,itemsep=3pt]
    \item obtain upper bounds for $\chi(F)$ given $e(\CP^2,F)=m$, and
    \item construct a surface realizing that upper bound.
\end{enumerate}

To this end, we will use the following.

\begin{theorem}[{\citeLRS}]
    Let $X$ be a closed, connected, oriented, positive definite 4-manifold with $H_1(X;\ZZ)=0$, and let $F\subset X$ be a closed, connected, nonorientable surface with nonorientable genus $h(F)=2-\chi(F)$.\linebreak
    Denote as $\ell(F)$ the minimal self-intersection of an integral lift of $[F]\in H_2(X;\ZZ/2)$. Then $$e(X,F)\geq\ell(F)-2h(F).$$
\end{theorem}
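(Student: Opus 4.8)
The plan is to turn an embedding of $F$ into a constraint on a \emph{closed} oriented $4$-manifold and then apply the universal inequality $\sigma\le b_2$, reinforced by Donaldson's diagonalization theorem where necessary. I would first dispose of the model case $\ell(F)=0$, in which $[F]=0\in H_2(X;\ZZ/2)$ and the asserted bound reads $e(X,F)\ge-2h(F)$. Here the double branched cover $Z=\Sigma_2(X,F)$ is a well-defined closed oriented $4$-manifold. A Gysin-type argument exactly as in \cref[Proposition]{prop:torsionHomology} (using $H_1(X;\ZZ/2)=0$, whence $H_1(X,F;\ZZ/2)=0$) gives $b_1(Z)=0$, so $b_2(Z)=\chi(Z)-2$; the topological Riemann--Hurwitz formula gives $\chi(Z)=2\chi(X)-\chi(F)$; and \cref[Theorem]{thm:H}, which applies verbatim to a non-orientable branch surface with $e(X,F)$ playing the role of the self-intersection (compare the identity $\sigma(\CPb^2)=-\tfrac12 e(\bS^4,\cQ)$ used in \cref[Section]{sec:prelim}), gives $\sigma(Z)=2\sigma(X)-\tfrac12 e(X,F)$. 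Since $\sigma(X)=b_2(X)$ and $\chi(X)=2+b_2(X)$ by positive definiteness and $H_1(X;\ZZ)=0$, the inequality $\sigma(Z)\le b_2(Z)$ collapses to $e(X,F)\ge 2\chi(F)-4=-2h(F)$.

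For the general case $\ell(F)=\ell>0$ one cannot branch over $X$ directly, and the additional term $+\ell$ must be produced by other means. By Donaldson's theorem $Q_X\cong I_{b_2(X)}$; I would fix a diagonal basis and write $[F]=\bar e_{i_1}+\dots+\bar e_{i_\ell}$ with $\ell$ chosen minimal. The idea is then to compare $F\subset X$ with a standard model: a non-orientable surface $F_0$ of genus $h(F)$ inside $\#^\ell\CP^2$ representing $e_1+\dots+e_\ell$, obtained by tubing together $\ell$ hyperplane spheres and attaching $h(F)$ crosscaps, each carrying normal Euler number $\pm2$. The normal Euler numbers realized this way are precisely $\ell+2j$ for $j\in\{-h(F),\dots,h(F)\}$ with $j\equiv h(F)\pmod 2$, whose minimum is $\ell-2h(F)$ --- this is the Massey-type realization step. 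One then excises the twisted tubular neighborhood $\nu(F)$ and glues back the neighborhood of a model surface with the same genus and normal Euler number (so the boundary circle bundles match); by Novikov additivity of signatures and an Euler-characteristic count, the resulting closed oriented $4$-manifold has the same $b_2$ and signature as $X$, hence is positive definite, and feeding it back into Donaldson's theorem should force $e(X,F)$ to be a realizable Euler number of the model, giving $e(X,F)\ge\ell-2h(F)$.

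The hard part is making this second argument non-circular and airtight. Two points need care: (i) identifying the boundary $3$-manifold $\partial\nu(F)$ of the twisted disk bundle and checking that the cut-and-paste above is well-posed and preserves the relevant invariants; and (ii) the step ``no model surface of too-negative normal Euler number exists,'' which at face value is a non-orientable adjunction bound of exactly the same strength as the theorem being proved. The way around this is genuine gauge-theoretic input: either a version of Donaldson's diagonalization theorem for positive-definite $4$-manifolds with boundary, or the Ozsv\'ath--Szab\'o correction-term ($d$-invariant) obstruction, comparing the $d$-invariants of $\partial\nu(F)$ --- a computable function of $h(F)$ and $e(X,F)$ --- with those of the definite filling $X\smallsetminus\nu(F)$. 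It is here that the minimality of the integral lift $\xi$ is really used: it is what pins the additive constant at $\ell(F)$ rather than at a weaker value, and it is the reason the bound is sharp (for instance the standard $\RP^2\subset\CP^2$, with $\ell(F)=1$, $h(F)=1$, $e(\CP^2,\RP^2)=-1$, realizes equality).
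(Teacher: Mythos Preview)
The paper does not prove this statement: it is quoted verbatim as Theorem~10.1 of Levine--Ruberman--Strle, with no argument supplied. So there is no ``paper's own proof'' to compare against; the theorem is used as a black box to extract the upper bounds $\tilde g(-k)\le 2-(k+\ell)/2$.

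That said, your proposal deserves a comment on its own merits. The $\ell(F)=0$ case is clean and complete: the double branched cover exists, the Riemann--Hurwitz and Hirzebruch formulas give $\chi(Z)$ and $\sigma(Z)$, the Gysin argument kills $b_1(Z)$, and the trivial inequality $\sigma(Z)\le b_2(Z)$ collapses exactly to $e(X,F)\ge -2h(F)$. This is essentially the same computation the paper carries out in \cref[Proposition]{prop:dBCObstruction} (there with $X=\CP^2$), so you have correctly identified the mechanism in that special case.

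For $\ell(F)>0$ your sketch is honest but not a proof. You yourself flag the circularity: the ``cut out $\nu(F)$ and glue in a model'' step, followed by re-applying Donaldson, does not by itself forbid small normal Euler numbers --- it only reproduces the realizability side. The genuine content, as you say, is a $d$-invariant obstruction for the positive-definite filling $X\smallsetminus\nu(F)$ of the circle bundle $\partial\nu(F)$, and this is exactly what Levine--Ruberman--Strle do. So your outline points at the right machinery but stops short of deploying it; the missing piece is the explicit computation of the correction terms of the relevant boundary $3$-manifold and the comparison with the lattice embedding coming from the diagonalized intersection form of the complement.
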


This allows us to obtain the upper bounds\vspace{-.5em}

$$\tilde{g}(-k)\leqslant2-\frac{k+\ell}{2}\vspace{.5em}$$
for $k\in\NN^\star$ and $\ell\in\{0,1\}$ having the same parity as $k$. Indeed, if $F\subset\CP^2$ has $e(\CP^2,F)=-k$, then $\ell(F)=\ell$, because $[F]\neq0\in H_2(\CP^2;\ZZ/2)$ if and only\linebreak
if $-k$ is odd, in which case a complex line is an integral lift of $F$ with minimal self-intersection.

Another method (which worked for the orientable Thom conjecture in degree~$4$, for instance) is to make use of homological information of the double branched cover of $\CP^2$ ramified along $F$. More precisely, we have the following.

\begin{proposition}\label{prop:dBCObstruction}
    Let $F\subset\CP^2$ be a closed, connected surface such that $[F]=0\in H_2(\CP^2;\ZZ/2)$ (or equivalently, such that $e(\CP^2,F)$ is even). Then
    
    $$\chi(F)\leqslant4-\frac{e(\CP^2,F)}{2}.$$
\end{proposition}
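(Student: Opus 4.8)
The plan is to build the double branched cover of $\CP^2$ along $F$ and read the bound off from its Euler characteristic and signature. Since $[F]=0$ in $H_2(\CP^2;\ZZ/2)$ — equivalently, $e(\CP^2,F)$ is even — there is a $2$-fold branched cover $p\colon W\to(\CP^2,F)$, and $W$ is a closed \emph{oriented} smooth $4$-manifold: orientability is automatic because the branch locus has codimension two, so the possible non-orientability of $F$ plays no role (this is the same phenomenon as for the cover $\Theta\colon Y\to(\CPb^2,\cX(F))$ in \cref[Section]{sec:proof}; for existence see \cite[\S6.3]{GS99} or \cite[Corollary 2.10]{Nag00}). I would orient $W$ so that $p$ has degree $+2$, which makes $p$ orientation-preserving and gives $p_\ast[W]=2[\CP^2]$.

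Next I would record the two relevant invariants of $W$. The topological Riemann--Hurwitz formula gives $\chi(W)=2\chi(\CP^2)-\chi(F)=6-\chi(F)$, and \cref[Theorem]{thm:H} applied to the cyclic $2$-fold branched cover $p$ gives $\sigma(W)=2\sigma(\CP^2)-\tfrac12 e(\CP^2,F)=2-\tfrac12 e(\CP^2,F)$. I would then run the argument of \cref[Proposition]{prop:torsionHomology} verbatim: as $\CP^2$ is simply connected and $F$ is connected, the long exact sequence of the pair gives $H_1(\CP^2,F;\ZZ/2)=0$, and the generalized Gysin sequence then forces $H_1(W;\ZZ/2)=0$; hence $H_1(W;\ZZ)$ is torsion and $b_1(W)=b_3(W)=0$, so that $b_2(W)=\chi(W)-2=4-\chi(F)$.

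The one step that is not pure bookkeeping is to show $b_2^+(W)\geqslant1$. For this I would pull back the hyperplane class $h\in H^2(\CP^2;\ZZ)$: since $p^\ast$ is a ring homomorphism, the projection formula gives $p^\ast a\cdot p^\ast b=\langle p^\ast(a\cup b),[W]\rangle=\langle a\cup b,p_\ast[W]\rangle=2\,(a\cdot b)$ for all $a,b\in H^2(\CP^2;\ZZ)$. In particular $(p^\ast h)^2=2>0$, so $p^\ast h$ spans a positive-definite line in $H^2(W;\RR)$ and $b_2^+(W)\geqslant1$ (equivalently: $p^\ast$ is injective and multiplies the intersection form by a positive constant, so $b_2^\pm(W)\geqslant b_2^\pm(\CP^2)$).

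Putting the pieces together, $b_2(W)=b_2^+(W)+b_2^-(W)=2\,b_2^+(W)-\sigma(W)\geqslant 2-\sigma(W)=\tfrac12 e(\CP^2,F)$, and since $b_2(W)=4-\chi(F)$ this rearranges to $\chi(F)\leqslant 4-\tfrac12 e(\CP^2,F)$. I expect the main obstacle to be exactly the $b_2^+(W)\geqslant1$ step together with its orientation bookkeeping — namely checking that \cref[Theorem]{thm:H} applies with $p$ orientation-preserving and that $p_\ast[W]=2[\CP^2]$ holds with the chosen orientation on $W$ — while everything else is a direct computation.
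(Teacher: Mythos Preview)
Your proposal is correct and follows essentially the same approach as the paper: take the double branched cover, compute $\chi$ and $\sigma$ via Riemann--Hurwitz and Hirzebruch, use the Gysin-type argument to get $b_1=b_3=0$, and then exhibit a class of positive square to force $b_2^+\geqslant1$. The only cosmetic difference is that the paper produces this positive class geometrically---lifting an orientable surface $\Sigma\subset\CP^2$ transverse to $F$ with $e(\CP^2,\Sigma)>0$ and invoking \cref[Lemma]{lem:branched-cover-euler-number} to get $e(W,\tilde{\Sigma})=2e(\CP^2,\Sigma)>0$---whereas you do it cohomologically via $p^\ast h$ and the projection formula; these are Poincar\'e-dual formulations of the same observation.
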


\begin{proof}
    Let $Y$ denote the double branched cover of $\CP^2$ ramified over $F$. We\linebreak
    compute\vspace{-.5em}
    
    $$\chi(Y)=6-\chi(F)\quad\text{and}\quad\sigma(Y)=2-\frac{e(\CP^2,F)}{2}.\vspace{.5em}$$
    Moreover, by reasoning analogous to the proof of \cref[Proposition]{prop:torsionHomology}, we have $b_1(Y)=b_3(Y)=0$, so that $b_2(Y)=4-\chi(F)$. If one considers any orientable surface\linebreak
    $\Sigma\subset\CP^2$ which is not null-homologous in $H_2(\CP^2;\ZZ)$, and which is transverse to $F$, we see that $e(\CP^2,\Sigma)>0$, and $\Sigma$ lifts in $Y$ to a surface $\tilde{\Sigma}$ with $e(Y,\tilde{\Sigma})=2e(\CP^2,\Sigma)>0$.  This implies that\vspace{-.5em}
    
    $$b_2^+(Y)=\frac{b_2(Y)+\sigma(Y)}{2}\geqslant1,\vspace{.5em}$$
    yielding the inequality that was claimed.
\end{proof}

Note that unless $e(\CP^2,F)\geqslant8$, the previous result only gives trivialities, since $\chi(F)\leqslant1$ for a non-orientable surface. This is enough to obtain the upper bounds $$\tilde{g}(4k)\leqslant4-2k\quad\text{and}\quad\tilde{g}(4k+2)\leqslant3-2k$$
for $k\in\NN^\star$. Note that if $k=1$, then the bound $\tilde{g}(4)\leqslant2$ is vacuous.

What remains to do is
\begin{enumerate}[leftmargin=21pt,itemsep=3pt]
    \item compute the special values of $\tilde{g}$ at $0$, $1$, $2$, $3$, $4$, $5$, $7$ and $9$;
    \item construct surfaces $F\subset\CP^2$ that realize the upper bounds for $\tilde{g}(-k)$, $\tilde{g}(4k)$\linebreak
    and $\tilde{g}(4k+2)$;
    \item construct surfaces $F\subset\CP^2$ to derive lower bounds for $\tilde{g}(4k+1)$ and $\tilde{g}(4k+3)$.
\end{enumerate}

To obtain upper bounds for $\tilde{g}$ on odd integers $\leqslant9$, we will need the following.

\begin{theorem}[\cite{GM80}]\label{thm:GM}
    Let $F\subset X$ be a mod 2 characteristic surface in a closed, connected oriented 4-manifold with $H_1(X;\ZZ)=0$. Then $$\sigma(X)-e(X,F)\equiv2\beta(X,F)\mod{16},$$
    where $\beta(X,F)$ is the Brown invariant of the embedding.
\end{theorem}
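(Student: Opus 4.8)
The plan is to prove this classical congruence by realizing both sides as invariants of a bordism group of characteristic pairs and reducing to Rokhlin's theorem. Note first that the naive approach of passing to the double branched cover $\Theta\colon Y\to X$ along $F$ and applying Rokhlin to $Y$ cannot work by itself: \cref[Theorem]{thm:H} gives $\sigma(Y)=2\sigma(X)-\tfrac12 e(X,F)$, but $Y$ is in general not spin when $F$ is only characteristic, and in any case such a computation is blind to the Brown invariant term. The term $2\beta(X,F)$ must therefore be produced by a finer argument.

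First I would pin down the quadratic enhancement. Since $F$ is characteristic and $w_1(TX)=0$ forces $w_1(\nu F)=w_1(F)$, the embedding endows $F$ with a canonical $\ZZ/4$-valued quadratic refinement $q_F\colon H_1(F;\ZZ/2)\to\ZZ/4$ of the intersection form: on a simple closed curve $\gamma\subset F$ one sets $q_F(\gamma)$ to be the number of half-twists of a normal framing of $\gamma$ induced from $F$, corrected by $2(\gamma\cdot_X D)$ where $D$ is any membrane in $X$ with $\partial D=\gamma$, the characteristic hypothesis making this well defined modulo $4$. By definition $\beta(X,F)$ is the Brown invariant of $q_F$, so the right-hand side is intrinsic. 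Next I would introduce the bordism group $\Omega$ of pairs $(X^4,F^2)$ with $X$ closed oriented, $H_1(X;\ZZ)=0$ and $F$ characteristic, and check that $(X,F)\mapsto\sigma(X)-e(X,F)$ and $(X,F)\mapsto 2\beta(X,F)$ both descend to homomorphisms $\Omega\to\ZZ/16$: the first by Novikov additivity together with the vanishing of the contribution of a $5$-dimensional coboundary, the second by bordism invariance of the Brown invariant of the induced quadratic structure. A short analysis of $\Omega$ then shows it is generated by $(\CP^2,\CP^1)$, by the two unknotted $\RP^2$'s in $\bS^4$ with normal Euler number $\pm2$, and by a closed spin $4$-manifold (e.g. a $K3$ surface) with empty surface. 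On these generators the congruence is immediate: it is $0\equiv0$ for $(\CP^2,\CP^1)$; it reads $\mp2\equiv2\beta$ for the $\RP^2$'s, where the generator of $H_1(\RP^2;\ZZ/2)$ has $q_F=\mp1$ and hence $\beta=\mp1$; and for the spin generator it is exactly Rokhlin's theorem $\sigma\equiv0\bmod16$, which supplies the arithmetic content.

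The hard part is not the bordism bookkeeping but the two ingredients it rests on. One is establishing that $q_F$ is genuinely well defined and bordism-invariant; this demands the careful normal-framing description above and an honest verification that the membrane correction is unambiguous modulo $4$ precisely when $F$ is characteristic, and that a null-bordism of the pair induces a null-bordism of the quadratic data. The other is Rokhlin's theorem itself. A secondary technical point, needed to reduce an arbitrary characteristic pair to the generators, is to perform ambient surgeries on $F$ along embedded circles and keep track of how $e(X,F)$ and $q_F$ change under each move; here one exploits that only the residue of $\sigma(X)-e(X,F)$ modulo $16$ and of $\beta(X,F)$ modulo $8$ need be controlled, which leaves enough room to carry the surgeries through even when $F$ has positive genus or is non-orientable.
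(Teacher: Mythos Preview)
The paper does not prove this statement at all: \cref[Theorem]{thm:GM} is quoted from \cite{GM80} as a black box and is only \emph{used} (in \cref[Section]{sec:quadrics} and in the computation of the special values of~$\tilde g$). So there is no ``paper's own proof'' to compare against; the relevant question is simply whether your outline is a sound route to the Guillou--Marin congruence.

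In spirit your plan is the standard one (reduce to generators of a suitable characteristic bordism group and invoke Rokhlin), but as written it has a real gap. You propose to form a bordism group~$\Omega$ of pairs $(X,F)$ with $H_1(X;\ZZ)=0$ and $F$ characteristic. The condition $H_1(X;\ZZ)=0$ is \emph{not} a bordism condition: there is no reason a $5$-dimensional cobordism $W$ between two such pairs should have any $H_1$ constraint, and without one you cannot propagate the membrane-based definition of $q_F$ through~$W$ to prove bordism invariance of $\beta$. The correct setup drops the $H_1$ hypothesis and instead identifies the relevant bordism group with a $\mathrm{Pin}^-$ bordism group (the complement of $F$ acquires a spin structure, and a tubular neighbourhood of $F$ a $\mathrm{Pin}^-$ structure, precisely because $F$ is characteristic); it is \emph{that} group whose generators are the pairs you list, and it is the $\mathrm{Pin}^-$ bordism invariance of the Brown invariant that replaces your ad hoc ``null-bordism of the quadratic data''. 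Once the bordism group is set up correctly, your check on $(\CP^2,\CP^1)$, on the two standard $\RP^2$'s in~$\bS^4$, and on a spin generator is exactly right and does finish the proof.

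A smaller point: your description of $q_F$ omits the self-intersection correction. Compare with the paper's formula \ceqref{eq:marin-form}: for an \emph{immersed} representative $\msC$ one needs the extra term $2e(F,\msC)$, and even for embedded $\msC$ one must say why the membrane $\msD$ can be taken embedded or else account for its double points. This is precisely the ``honest verification'' you flag as the hard part, and it deserves to be written out rather than asserted.
\qed
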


We shall recall what $\beta(X,F)$ is. The Guillou--Marin form $$\varphi:H_1(F;\ZZ/2)\to\ZZ/4$$
is defined as follows. Because $H_1(X;\ZZ)=0$, any immersed circle $\msC\looparrowright F$ bounds an immersed orientable surface $\msD\looparrowright X$. Isotope $\msD$ (relatively to its boundary) such that it is transverse to $F$. The normal bundle $\nu\msD$ of $\msD$ in $X$ is trivial, and as such, so is $\nu\msD|_\msC$. Considering the normal bundle $\nu\msC$ of $\msC$ in $F$ as a subbundle $\nu\msC<\nu\msD|_\msC$, count the number $n(\msD)$ of right-handed half-twists with respect to the fixed trivialization $\nu\msD|_\msC\cong\msC\times\RR^2$. Define
\begin{equation}\label{eq:marin-form}
    \varphi(\msC)=n(\msD)+2\msD\cdot F+2e(F,\msC)\mod{4},\tag{$\ast$}
\end{equation}
where $\msD\cdot F$ is the number of transverse intersection points $F\tcap\msD$ taken mod 2. Then this definition does not depend on any of the choices made, and $\varphi(\msC)$ depends only on the homology class $[\msC]\in H_1(F;\ZZ/2)$. This defines a quadratic map $$\varphi:H_1(F;\ZZ/2)\to\ZZ/4,\pagebreak$$
to which one can regard its \textit{Brown invariant}

\begin{equation}\label{eq:brown-invariant}
    \beta(X,F)\overset{\text{def.}}{=}\left(\frac{1}{\sqrt{2}}\right)^{b_1(F;\ZZ/2)}\sum_{x\in H_1(F;\ZZ/2)}\sqrt{-1}^{\varphi(x)}.\tag{$\ast\ast$}\vspace{.5em}
\end{equation}

For instance, we can compute the possible values of $\beta(\CP^2,K)$ where $K\subset\CP^2$ is a Klein bottle. We refer to \cref[Figure]{fig:basis-Klein} for a choice of generators $a$ and $b$ of $H_1(K;\ZZ)=\{0,a,b,a+b\}$.

\begin{figure}[t]
    \centering
    \begin{tikzpicture}
        \node[inner sep=0pt] at (0,0) {\includegraphics{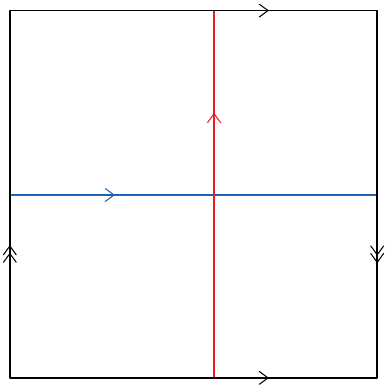}};
        \node at (.5,.78) {$b$};
        \node at (-.85,.31) {$a$};
    \end{tikzpicture}
    \begingroup
    \captionsetup{width=.9\linewidth}
    \caption{The standard basis for the first homology of the Klein bottle.}\label{fig:basis-Klein}
    \endgroup
\end{figure}

From $\varphi(a+b)=\varphi(a)+\varphi(b)+2a\cdot b=\varphi(a)+\varphi(b)+2$, it suffices to compute $\varphi(a)$ and $\varphi(b)$. One checks that $\varphi(a)\in\{1,3\}$ and $\varphi(b)\in\{0,2\}$, by computing each\linebreak
term in \ceqref{eq:marin-form}. Therefore, it suffices to inspect each of the four cases individually, and plug the values in \ceqref{eq:brown-invariant} to obtain $$\beta(\CP^2,K)\in\{0,2,6\}.$$

\begin{proposition}
    We have $\tilde{g}(1)\leqslant0$, $\tilde{g}(3)\leqslant1$, $\tilde{g}(5)\leqslant0$, $\tilde{g}(7)\leqslant-1$ and $\tilde{g}(9)\leqslant-2$.
\end{proposition}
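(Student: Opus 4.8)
The plan is to apply the Guillou--Marin congruence (\cref[Theorem]{thm:GM}) together with the standard bound on the Brown invariant of a nondegenerate $\ZZ/4$-quadratic form on an $h$-dimensional $\ZZ/2$-vector space.

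First, fix $m\in\{1,3,5,7,9\}$ and let $F\subset\CP^2$ be a closed, connected, non-orientable surface with $e(\CP^2,F)=m$. Since $m$ is odd, $[F]$ is the generator of $H_2(\CP^2;\ZZ/2)$ (this is the equivalence already used in the proof of \cref[Proposition]{prop:NOproperties}), which is the unique mod $2$ characteristic class of $\CP^2$; hence $F$ is mod $2$ characteristic and \cref[Theorem]{thm:GM} applies. With $\sigma(\CP^2)=1$ and $e(\CP^2,F)=m$, it gives $1-m\equiv 2\beta(\CP^2,F)\pmod{16}$, so $\beta(\CP^2,F)\equiv\tfrac{1-m}{2}\pmod 8$. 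Concretely, $\beta(\CP^2,F)$ is congruent mod $8$ to $0$, $7$, $6$, $5$, $4$ according as $m=1,3,5,7,9$.

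Next I would bound $\beta(\CP^2,F)$ in terms of $h(F)=2-\chi(F)$, the non-orientable genus. The Guillou--Marin map $\varphi\colon H_1(F;\ZZ/2)\to\ZZ/4$ is a quadratic refinement of the mod $2$ self-intersection form of the closed surface $F$. That form is nondegenerate by Poincaré duality, and because $F$ is non-orientable some class (e.g.\ the core of a Möbius band, or the $\RP^1$ in an $\RP^2$-summand) has self-intersection $1$, so the form is non-alternating; hence over $\ZZ/2$ it is isometric to the identity form on $(\ZZ/2)^{h(F)}$. In an orthogonal basis $e_1,\dots,e_{h(F)}$ the diagonal entries force each $\varphi(e_i)$ to be odd, i.e.\ $\varphi(e_i)\in\{1,3\}$, and $\varphi$ splits as the orthogonal sum $\bigoplus_i\langle\varphi(e_i)\rangle$. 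Since the Brown invariant is additive under orthogonal sum with $\beta(\langle 1\rangle)=1$ and $\beta(\langle 3\rangle)=-1$, we get $\beta(\CP^2,F)=\#\{i:\varphi(e_i)=1\}-\#\{i:\varphi(e_i)=3\}$, an integer with $|\beta(\CP^2,F)|\le h(F)$ and $\beta(\CP^2,F)\equiv h(F)\pmod 2$; equivalently, its class mod $8$ lies in $\{-h(F),-h(F)+2,\dots,h(F)\}$. Combining with the previous paragraph: for $m=1$ and $m=5$, $\beta(\CP^2,F)$ is even, forcing $h(F)$ even, hence $h(F)\ge 2$ and $\chi(F)\le 0$; for $m=3$, $\beta(\CP^2,F)$ is odd, so $h(F)\ge 1$ and $\chi(F)\le 1$; for $m=7$ one needs $\beta(\CP^2,F)\equiv 5\pmod 8$, and $h(F)=1$ would only permit $\beta\in\{1,7\}\bmod 8$, so $h(F)\ge 3$ and $\chi(F)\le -1$; for $m=9$ one needs $\beta(\CP^2,F)\equiv 4\pmod 8$, and $h(F)=2$ would only permit $\beta\in\{0,2,6\}\bmod 8$, so $h(F)\ge 4$ and $\chi(F)\le -2$. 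This yields $\tilde g(1)\le 0$, $\tilde g(3)\le 1$, $\tilde g(5)\le 0$, $\tilde g(7)\le -1$, $\tilde g(9)\le -2$.

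The main point to get right is the structural claim about $\beta$ in the middle step — in particular that the alternating rank-two atom (which has Brown invariant $4$) cannot occur, which is precisely what non-orientability of $F$ rules out and what makes the cases $m=7$ and $m=9$ go through; the cruder parity constraint alone (all one extracts from \cref[Theorem]{thm:YamadaWhitney}) would only give $\tilde g(7)\le 1$ and $\tilde g(9)\le 0$. The signature and Euler-characteristic bookkeeping feeding the congruence is routine, and the reverse inequalities (explicit surfaces realizing these values) are the constructions carried out later in this section.
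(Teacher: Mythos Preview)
Your proof is correct and rests on the same tool as the paper's, the Guillou--Marin congruence, but you package the Brown-invariant step more uniformly. The paper first uses \cref[Theorem]{thm:YamadaWhitney} to get the parity of $\chi(F)$, which already yields the three easy bounds $\tilde g(1)\le 0$, $\tilde g(3)\le 1$, $\tilde g(5)\le 0$; it then treats $m=7$ and $m=9$ separately by assuming $F\cong\RP^2$ (resp.\ $F\cong K$), computing the possible Brown invariants by hand ($\beta(\CP^2,\RP^2)\in\{1,7\}$ and $\beta(\CP^2,K)\in\{0,2,6\}$), and reaching a contradiction with the congruence. You instead prove once and for all that for a non-orientable $F$ the mod~$2$ intersection form is the diagonal form $\langle 1\rangle^{\oplus h}$, whence $\beta(\CP^2,F)\in\{-h,-h+2,\dots,h\}\pmod 8$, and then read off all five cases simultaneously. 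Your structural statement specializes exactly to the paper's explicit lists for $h=1,2$, so the content is the same; what you gain is a single argument that would continue to work for larger $m$, while the paper's case-by-case computation is slightly more hands-on but does not obviously scale. Your remark that non-orientability is what excludes the hyperbolic atom (with Brown invariant $4$) is exactly the point that makes $m=9$ go through, and corresponds in the paper to the explicit Klein-bottle calculation yielding $\{0,2,6\}$ rather than $\{0,2,4,6\}$.
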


\begin{proof}
    Note that \cref[Theorem]{thm:YamadaWhitney} gives
    $$e(\CP^2,F)\in\{1,5,9\}\implies\chi(F)\text{ is even},$$
    $$e(\CP^2,F)\in\{3,7\}\implies\chi(F)\text{ is odd}.$$
    In particular, from $\chi(F)\leqslant1$ always, we see that the only non-trivial bounds are for $\tilde{g}(7)$ and $\tilde{g}(9)$.\vspace{.5em}
    
    \noindent
    (1) Assume that $F\subset\CP^2$ has $e(\CP^2,F)=7$ and $\chi(F)=1$ (that is, $F$ is diffeomorphic to a projective plane). Then, the Guillou--Marin congruence (\cref[Theorem]{thm:GM}) gives $$1-7\equiv2\beta(\CP^2,F)\mod{16}.$$
    A simple calculation as before for the possible values of $\beta(\CP^2,K)$ gives that, in the case of the projective plane, $\beta(\CP^2,\RP^2)=1$, a contradiction.
    
    \noindent
    (2) If $F$ now has $e(\CP^2,F)=9$ and $\chi(F)=0$ (that is, $F$ is a Klein bottle), we\linebreak
    can use the Guillou--Marin congruence again to derive $$\beta(\CP^2,F)\equiv4\mod{8}.$$
    The previous calculations gave $\beta(\CP^2,K)\in\{0,2,6\}$, which is a contradiction.\qedhere
\end{proof}

Now, the only things that remain to do are to construct surfaces that realize the upper bounds obtained thus far, and compute upper bounds for $\tilde{g}$ in the special\linebreak
cases not covered yet. For constructions of surfaces, we will make use of \textit{local surfaces}. Recall the so-called Whitney--Massey theorem.

\begin{theorem}[\cite{Mas69}]
    Let $F\subset\bS^4$ be a closed connected non-orientable surface. Then $$e(\bS^4,F)\in\{2\chi(F)-4,2\chi(F),\dots,4-2\chi(F)\}.$$
    All tuples $(e,\chi)$ satisfying this condition are realizable by a closed, connected,\linebreak
    nonorientable surface.
\end{theorem}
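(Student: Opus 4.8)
The plan is to prove the two assertions separately: the constraint on the pair $(e,\chi)$ will come from passing to a double branched cover and invoking the signature inequality, while realizability will come from connected sums of the standard projective planes $\cR$ and $\cQ$ already constructed in \cref[Section]{sec:prelim}.

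For the constraint, let $F\subset\bS^4$ be closed, connected and non-orientable. Since $H_2(\bS^4;\ZZ/2)=0$ we have $[F]=0$ there, so the $2$-fold branched cover $W\to\bS^4$ ramified over $F$ exists, exactly as in \cref[Section]{sec:proof}. First I would compute, by the topological Riemann--Hurwitz formula, $\chi(W)=2\chi(\bS^4)-\chi(F)=4-\chi(F)$, and, by \cref[Theorem]{thm:H}, $\sigma(W)=2\sigma(\bS^4)-\tfrac12 e(\bS^4,F)=-\tfrac12 e(\bS^4,F)$. Next, the argument of \cref[Proposition]{prop:torsionHomology} carries over: the generalised Gysin sequence together with $H_1(\bS^4,F;\ZZ/2)=0$ (immediate from the long exact sequence of the pair, both $\bS^4$ and $F$ being connected) yields $b_1(W)=b_3(W)=0$, so $b_2(W)=\chi(W)-2=2-\chi(F)$. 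From $|\sigma(W)|\leqslant b_2(W)$ we then get $|e(\bS^4,F)|\leqslant 4-2\chi(F)$, i.e.\ $2\chi(F)-4\leqslant e(\bS^4,F)\leqslant 4-2\chi(F)$. Finally \cref[Theorem]{thm:YamadaWhitney}, applied with $[F]=0$, gives the congruence $e(\bS^4,F)\equiv 2\chi(F)\pmod 4$; and the integers in $[\,2\chi(F)-4,\,4-2\chi(F)\,]$ congruent to $2\chi(F)$ modulo $4$ are precisely $2\chi(F)-4,2\chi(F),\dots,4-2\chi(F)$, the asserted list.

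For realizability, I would use the embedded projective planes $\cR=p(\RP^2)$ and $\cQ=p(Q)$ in $\bS^4$, which satisfy $e(\bS^4,\cR)=-2$, $e(\bS^4,\cQ)=+2$ and $\chi(\cR)=\chi(\cQ)=1$, and which may be isotoped into disjoint balls. Given a pair $(e,\chi)$ with $\chi\leqslant 1$, $e\equiv 2\chi\pmod 4$ and $2\chi-4\leqslant e\leqslant 4-2\chi$, set $h=2-\chi\geqslant 1$ and $j=(2h-e)/4$; the congruence and the inequality force $j\in\{0,1,\dots,h\}$. Taking the connected sum in $\bS^4$ — performed in disjoint balls and along standard untwisted tubes, so that Euler characteristics add with a $-2$ correction and normal Euler numbers simply add — of $j$ copies of $\cR$ with $h-j$ copies of $\cQ$ produces a closed, connected, non-orientable surface $F$ that is abstractly $\#^{h}\RP^2$, hence has $\chi(F)=2-h=\chi$, and with $e(\bS^4,F)=-2j+2(h-j)=2h-4j=e$. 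This realizes every admissible pair, and in fact by a surface contained in a ball, i.e.\ a local surface.

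As for the main difficulty: the argument is essentially assembly, since all the ingredients — the double branched cover, the Hirzebruch signature formula, the Whitney-type congruence, the $\pm2$ embeddings of $\RP^2$, and additivity of the normal Euler number under connected sum — are already in place above. The one step requiring genuine care is the vanishing $b_1(W)=b_3(W)=0$, which must be extracted from the Gysin/Smith machinery exactly as in \cref[Proposition]{prop:torsionHomology}; granting that, the rest is a direct computation.
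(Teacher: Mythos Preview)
The paper does not prove this statement at all: it is quoted as Massey's theorem \cite{Mas69} and used as an external input. So there is no ``paper's own proof'' to compare against.

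That said, your argument is correct and is essentially Massey's original one, reassembled from pieces already lying around in the paper. For the constraint you take the double branched cover $W$ of $(\bS^4,F)$, compute $\chi(W)$ and $\sigma(W)$ via Riemann--Hurwitz and \cref[Theorem]{thm:H}, kill $b_1$ and $b_3$ by the Gysin argument of \cref[Proposition]{prop:torsionHomology}, and then read off the bound from $|\sigma(W)|\leqslant b_2(W)$ together with the congruence from \cref[Theorem]{thm:YamadaWhitney}; this is exactly the $G$-signature/branched-cover proof Massey gave. For realizability, tubing together copies of the two standard cross-caps $\cR$ and $\cQ$ with $e=\mp 2$ is the classical construction, and your bookkeeping $h=2-\chi$, $j=(2h-e)/4$ is correct. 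The only step that deserves an extra word is that $\cR$ and $\cQ$ (or rather, copies of them) can be placed in disjoint $4$-balls: this is clear since any compact subset of $\bS^4$ misses a point, but it is what justifies the additivity of the normal Euler number under the tube sum. Nothing is missing.
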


A surface $F$ embedded in the $4$-ball $\BB^4$ that realizes an admissible tuple $(e,\chi)$ will be called a \textit{local surface}. Note that the previous theorem ensures that those always exist for any admissible tuple.

\begin{proposition}
    All upper bounds obtained for $\tilde{g}$ so far are sharp. We have $$\tilde{g}(4k+1)\geqslant2-2k\text{ and }\tilde{g}(4k+3)\geqslant1-2k.$$
\end{proposition}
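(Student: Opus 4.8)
This statement is essentially a list of constructions: every upper bound for $\tilde{g}$ recorded so far must be matched by an explicit embedded surface in $\CP^2$, the three leftover special-value upper bounds ($\tilde{g}(0)$, $\tilde{g}(2)$, $\tilde{g}(4)$) must be supplied, and for the odd positive integers we must produce surfaces realizing the stated lower bounds (no matching upper bound is claimed there --- that gap is exactly the subsequent conjecture). The plan is to assemble all the surfaces needed from two model surfaces in $\CP^2$ --- a complex line $\CP^1$ (an embedded $\bS^2$ with $e(\CP^2,\CP^1)=1$, $\chi=2$) and a nonsingular conic $Q$ (an embedded $\bS^2$ with $e(\CP^2,Q)=4$, $\chi=2$) --- together with \emph{local surfaces}: one tube-sums a closed non-orientable surface $\Sigma$, sitting inside a disjoint $4$-ball, onto one of these models, or in the purely even cases simply takes such a $\Sigma$ by itself. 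Under a tube sum, normal Euler numbers add and $\chi$ changes by $\chi(\Sigma)-2$; moreover a closed surface confined to a ball always has \emph{even} normal Euler number, and by the Whitney--Massey theorem a closed connected non-orientable $\Sigma\subset\BB^4$ realizing a pair $(e(\Sigma),\chi(\Sigma))$ exists precisely when $e(\Sigma)\equiv 2\chi(\Sigma)\pmod 4$ and $|e(\Sigma)|\leqslant 4-2\chi(\Sigma)$. So each case reduces to solving a one-line linear system for $(e(\Sigma),\chi(\Sigma))$ and checking these two constraints.

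For even self-intersection numbers $m$ there is no need for a model surface carrying nonzero homology: if the target pair $(m,\tilde{g}(m))$ already satisfies the Whitney--Massey constraints --- which is the case for $\tilde{g}(0)$, for $\tilde{g}(-k)$ with $k$ even, and for $\tilde{g}(2)$ and $\tilde{g}(4)$ --- a single local surface realizes it. Otherwise, for $\tilde{g}(4k)$ with $k\geqslant 2$ and for $\tilde{g}(4k+2)$, one tube-sums the conic $Q$ with a local $\Sigma$: the equations $4+e(\Sigma)=m$ and $\chi(\Sigma)=\tilde{g}(m)$ determine $\Sigma$, the Whitney--Massey inequality is then satisfied (with equality), and the constraint $\chi(\Sigma)\leqslant 1$ reproduces exactly the range of $k$ in the statement --- in particular it forces $k\geqslant 2$ in the first family. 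Connectedness and non-orientability of the resulting surface are automatic.

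For odd $m$ one must use a surface carrying the generator of $H_2(\CP^2;\ZZ/2)$, and the complex line $\CP^1$ does the job: tube-summing $\CP^1$ with a local non-orientable $\Sigma$ gives a connected non-orientable surface with $e=1+e(\Sigma)$ and $\chi=\chi(\Sigma)$. Running the same bookkeeping yields surfaces realizing $\tilde{g}(-k)$ for $k$ odd, the special values $\tilde{g}(1)=0$, $\tilde{g}(3)=1$, $\tilde{g}(5)=0$, $\tilde{g}(7)=-1$, $\tilde{g}(9)=-2$, and the lower bounds $\tilde{g}(4k+1)\geqslant 2-2k$ and $\tilde{g}(4k+3)\geqslant 1-2k$; here the inequality $|e(\Sigma)|\leqslant 4-2\chi(\Sigma)$ is saturated, which is precisely why the construction cannot be pushed further. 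Finally, the three leftover upper bounds among the special values are cheap: $\tilde{g}(2)\leqslant 1$ since $\chi\leqslant 1$ for any non-orientable surface, while $\tilde{g}(0)\leqslant 0$ and $\tilde{g}(4)\leqslant 0$ follow from \cref[Theorem]{thm:YamadaWhitney}, because $[F]=0\in H_2(\CP^2;\ZZ/2)$ forces $q([F])=0$, hence $e(\CP^2,F)+2\chi(F)\equiv 0\pmod 4$, so $\chi(F)$ is even.

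The only real work is the arithmetic bookkeeping: for each family and special value, choosing the model surface and the local $\Sigma$ so that $(e(\Sigma),\chi(\Sigma))$ lands exactly on the Whitney--Massey-realizable locus, and then checking that the assembled surface is connected and non-orientable. There is no deep obstacle --- the subtlety is that for these targets the Whitney--Massey inequalities are typically tight, so one has essentially no slack and must pick the models with care (the conic rather than a line for the $4k$ and $4k+2$ families, the line rather than a local surface for every odd $m$). Beyond the Whitney--Massey theorem, the notion of local surface, and the Yamada--Whitney congruence, nothing further is needed.
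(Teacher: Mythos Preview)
Your proposal is correct and follows essentially the same approach as the paper: tube-summing local non-orientable surfaces (supplied by the Whitney--Massey theorem) with a complex line or the conic $Q$, and verifying the arithmetic case by case. The only differences are cosmetic --- you fold in the special values $\tilde{g}(0),\tilde{g}(2),\tilde{g}(4)$ that the paper treats just after the proposition, and for $\tilde{g}(3)$ you tube a line with a local $\RP^2$ of self-intersection $+2$ whereas the paper tubes the conic with the Lagrangian $\RP^2\subset\CP^2$; both constructions are valid.
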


\begin{proof}
    Fix $k\in\NN^\star$ a non-negative integer.\vspace{.5em}
    
    \noindent
    (1) If $k=2p$ is even, consider a local surface $F\subset\BB^4\subset\CP^2$ of genus $p$ and\linebreak
    self-intersection $-2p$. Then $\chi(F)=2-p=2-k/2$ and $e(\CP^2,F)=-k$. If now $k=2p+1$ is odd, choose $F\subset\BB^4$ a local surface of genus $p+1$ and self-intersection $-2(p+1)$. Embed the $4$-ball $\BB^4$ in $\CP^2$ away from a fixed complex line $L\subset\CP^2$, and consider the surface $F\#L$ obtained by connecting $F$ and $L$ with a small tube. Then, by noting that $L$ is a $2$-sphere with self-intersection $+1$, we have $$\chi(F\# L)=\chi(F)=2-(p+1)=2-\frac{k+1}{2}$$
    and $$e(\CP^2,F\#L)=-2(p+1)+1=-k.$$
    In both cases, this implies that $$\tilde{g}(-k)\geqslant2-\frac{k+\ell}{2}.\pagebreak$$
    
    \noindent
    (2) Assume that $k\geqslant2$. Let $F\subset\BB^4$ be a local surface of genus $2(k-1)$ and\linebreak
    self-intersection $4(k-1)$. Embed the $4$-ball away from the conic $Q$, and consider the surface $F\#Q$ (again, by connecting them with a small tube). We compute, using that $Q$ is a $2$-sphere with self-intersection $+4$, $$\chi(F\#Q)=4-2k\text{ and }e(\CP^2,F\#Q)=4k,$$
    yielding the lower bound $\tilde{g}(4k)\geqslant4-2k$. Note that in the case $k=1$, this works, but gives an orientable surface (the $2$-sphere $Q$).\vspace{.3em}
    
    \noindent
    (3) Let $F\subset\BB^4$ be a local surface of genus $2k-1$ and self-intersection $4k-2$.\linebreak
    Tubing with the conic $Q$ gives $$\chi(F\#Q)=3-2k\text{ and }e(\CP^2,F\#Q)=4k+2,$$
    which provides us with $\tilde{g}(4k+2)\geqslant3-2k$.\vspace{.3em}
    
    \noindent
    (4) One can do the same to derive the bounds $$\tilde{g}(4k+1)\geqslant2-2k\quad\text{and}\quad\tilde{g}(4k+3)\geqslant1-2k.$$
    Indeed, taking $F$ to be a local surface of genus $2k$ (\textit{resp.} $2k+1$) and self-intersection $4k$ (\textit{resp.} $4k+2$), this can be embedded away from a complex line $L$. Looking at the surface $F\#L$ gives the lower bounds. The special cases for $\tilde{g}(5),\dots,\tilde{g}(9)$ are covered by this construction. For $\tilde{g}(1)$, it suffices to consider a local Klein bottle with self-intersection $0$ and tubing it with a complex line, and for $\tilde{g}(3)$, the conic $Q$ can be tubed to $\RP^2=\mathrm{Fix}(\conj)$.\qedhere
\end{proof}

To conclude the proof of \cref[Theorem]{thm:NOThom}, there only remains to compute three\linebreak
special values that have not been covered yet:
\begin{enumerate}[leftmargin=21pt,itemsep=3pt]
    \item $\tilde{g}(0)=0$, because the Euler characteristic must be even, and a local Klein bottle with zero self-intersection gives a lower bound.
    \item $\tilde{g}(2)=1$, since a local projective plane with self-intersection $+2$ works.
    \item $\tilde{g}(4)=0$, as the Euler characteristic must be even, and a local Klein bottle\linebreak
    with self-intersection $+4$ suffices.
\end{enumerate}

\section*{Acknowledgements}

The author is very grateful to T. Fiedler for all the insight he gave and for all\linebreak
the discussions. The author would also like to thank M. Golla for his very nice\linebreak
explanations, and D. Moussard for all the precious time she gave. The author is also deeply thankful to the anonymous referee for helpful comments and suggestions. This work received support from the University Research School EUR-MINT (state support managed by the National Research Agency for Future Investments program bearing the reference ANR-18-EURE-0023).

\emergencystretch=1em 
\nocite{*}

\bibliography{References}
\bibliographystyle{apacite}

\end{document}